\pgfplotsset{
	standard/.style={
		axis x line=middle,
		axis y line=middle,
		every axis x label/.style={at={(current axis.right of origin)},anchor=north west},
		every axis y label/.style={at={(current axis.above origin)},anchor=north east}
	}
}
\pgfplotsset{width=13cm,compat=1.13}
\newcommand{\subgp}[1]{\langle{{\hash}1}\rangle}
\def\S{\mathcal{S}}
\def\x{\textbf{x}}
\def\y{\textbf{y}}
\def\XX{\mathbf{X}}
\def\Bern{\textrm{Bern}}
\def\Bin{\mathbf{Bin}}
\newcommand{\Ex}[1]{\mathbb{E}\left[#1\right]}
\newcommand{\pr}[1]{\mathbb{P}\left(#1\right)}
\newcommand{\expb}[1]{\exp\left(#1\right)}
\def\Pr{\mathbb{P}}
\newcommand{\eq}[1]{\begin{equation}\label{eq:#1}}
	\newcommand{\eqe}{\end{equation}}
\newcommand{\eqr}[1]{\eqref{eq:#1}}
\numberwithin{equation}{section}
\def\Var{\textup{Var}}
\def\RR{\mathbb{R}}
\newtheorem{theorem}{Theorem}[section]
\newtheorem{lem}[theorem]{Lemma}
\newtheorem{conjecture}[theorem]{Conjecture}
\newtheorem{cor}[theorem]{Corollary}
\newtheorem{claim}[theorem]{Claim}
\newtheorem{prop}[theorem]{Proposition}
\theoremstyle{definition}\newtheorem{definition}[theorem]{Definition}
\theoremstyle{definition}\newtheorem*{remark}{Remark}
\def\le{\leqslant}
\def\ge{\geqslant}
\renewcommand{\leq}{\leqslant}
\renewcommand{\geq}{\geqslant}
\def\eps{\varepsilon}
\definecolor{sgreen}{rgb}{0.3, 0.9, 0.3} 
\definecolor{lblue}{rgb}{0.6, 0.6, 1} 
\definecolor{lgr}{rgb}{0.8, 0.8, 0.8} 
\definecolor{purp}{rgb}{0.9, 0, 0.9} 
\definecolor{mgr}{rgb}{0.7, 0.7, 0.7} 
\definecolor{dmgr}{rgb}{0.6, 0.6, 0.6} 
\pgfplotsset{compat=1.13}
\begin{document}

	\title[Moderate deviations of triangle counts]{Moderate Deviations of Triangle Counts in the Erd\H{o}s-R\'enyi Random Graph $G(n,m)$: The Lower Tail}
	
	\author{Jos\'e D. Alvarado \and Gabriel Dias do Couto \and Simon Griffiths}

\address{Faculty of Mathematics and Physics, University of Ljubljana, Slovenia, and Institute of Mathematics, Physics and Mechanics, Slovenia.}\email{josealvarado.mat17@gmail.com}

\address{Departamento de Matem\'atica, PUC-Rio, Rua Marqu\^{e}s de S\~{a}o Vicente 225, G\'avea, 22451-900 Rio de Janeiro, Brazil}\email{gdiascouto@aluno.puc-rio.br, simon@puc-rio.br}

\thanks{Jos\'e received postdoctoral grants from the Brazilian funding agencies CNPq (Proc. 153903/2018-0), FAPERJ (Proc. E-26/202.011/2020) and FAPESP (Proc. 2020/10796-0), Gabriel is supported by a CAPES PhD scholarship and Simon was partially supported by FAPERJ (Proc.~201.194/2022) and by CNPq (Proc.~307521/2019-2).\\ This study was financed in part by the Coordenação de Aperfeiçoamento de Pessoal de Nível Superior – Brasil (CAPES) – Finance Code 001}

	\begin{abstract}
		Let $N_{\triangle}(G)$ be the number of triangles in a graph $G$.  In~\cite{GGS} and~\cite{NRS22} (respectively) the following bounds were proved on the lower tail behaviour of triangle counts in the dense Erd\H{o}s-R\'enyi random graphs $G_{(m)}\sim G(n,m)$:
		\[ \pr{N_{\triangle}(G_{(m)}) \, <  \, (1-\delta)\Ex{N_{\triangle}(G_{(m)})}} \,=\, \begin{cases} 
			\expb{-\Theta\left(\delta^2n^3\right)} & \text{if $n^{-3/2}\ll \delta\ll n^{-1}$} \\  
			\expb{-\Theta(\delta^{2/3}n^2) } & \text{if $n^{-3/4} \ll \delta \ll 1$.}
		\end{cases} 
		\phantom{\Bigg|}
		\]
Neeman, Radin and Sadun~\cite{NRS22} also conjectured that the probability should be of the form $\expb{-\Theta\left(\delta^2n^3\right)}$ in the ``missing interval'' $n^{-1}\ll \delta\ll n^{-3/4}$.  We prove this conjecture.

As part of our proof we also prove that some random graph statistics, related to degrees and codegrees, are normally distributed with high probability.
	\end{abstract}

	\maketitle

\section{Introduction}\label{sec : Intro}
	
The distribution and tail behaviour of subgraph counts, and especially the triangle count, has been a very active area of research in recent decades.  Subgraph counts are very natural examples of sums of dependent random variables.

In particular, a great many results have been proved regarding small deviations (of the order of the standard deviation) beginning with Ruci\'nski \cite{Ru88}, see also~\cite{BKR,JanRSA,Jan,JN,KRT,RR,Rol}.  There have also been many results which focus on large deviations (of the order of the mean) including the seminal articles of Vu~\cite{Vu} and Janson and Ruci\'nski~\cite{JR} in the early 2000s.  Later, Chatterjee and Varadhan~\cite{CV} related such deviations in dense random graphs to solutions of variational problems.  See~\cite{Au,CD20,El} for further developments related to these techniques.  This variational problem was solved for cliques by Lubetzky and Zhao~\cite{LZ} and in general by Bhattacharya, Ganguly, Lubetzky and Zhao~\cite{BGLZ}.   See the survey of Chatterjee~\cite{Chat} and the references therein for a detailed overview.    A major breakthrough by Harel, Mousset and Samotij~\cite{HMS2019} essentially resolved the large deviation upper tail problem for triangles.

Large deviations have also been studied with respect to the lower tail.  Zhao~\cite{Z} solved the lower tail variational problem for a large range in dense random graphs, where $p\in (0,1)$ is constant.  Kozma and Samotij~\cite{KS} defined an optimisation problem, whose solution would essentially determine the log probability for lower tail large deviations.  This characterisation works down to densities which are $\omega(n^{-1/m_2(H)})$ where $m_2(H)$ is the so called $2$-density of $H$.  As $m_2(K_3)=2$, this framework only works for $p\gg n^{-1/2}$ for triangles.  Recently, Jenssen, Perkins, Potukuchi and Simkina~\cite{JPPS} found the asymptotic rate for certain ranges of large deviations in the lower tail for triangle, in the ``critical'' range $p=\Theta(n^{-1/2})$.  See also, Janson and Warnke~\cite{JW} for some general lower tail results.
	
There has also been some interest in deviations of intermediate value, which we call moderate deviations.  These deviations are considered in the $G(n,p)$ model in~\cite{DE,DE2,FGN}.  It is argued by the third author, together with Goldschmidt and Scott~\cite{GGS} that, for many moderate deviation problems, the $G(n,m)$ model is more appropriate as it is possible to study finer causes of deviations, and that, in any case, one may deduce results for $G(n,p)$ by a simple conditioning argument.  See also~\cite{AGG}, which extends these results to sparser random graphs.
	
Let us now consider the model $G_{(m)}\sim G(n,m)$, in which $G_{(m)}$ is selected uniformly from graphs with $n$ vertices and $m$ edges.  Let $N_{\triangle}(G)$ be the number of triangles in a graph $G$.  The majority of results previously mentioned have focused on the upper tail, whereas we shall focus on the lower tail.  That is, we consider the question of how likely it is that a random graph has many \emph{fewer} triangles than expected.  Let us begin by stating the main results of ~\cite{GGS} and~\cite{NRS22} in this context.

Let $p\in (0,1)$ be a constant, and suppose that the sequence $m=m_n$ is such that\footnote{We shall omit floor signs and assume certain quantities are integers.  It will be clear that the results and proofs are robust to a $\pm 1$ additive error, which corresponds to $\pm O(n^{-2})$ error in terms of the density $p$.} $m=p\binom{n}{2}$.  That is, the random graph $G_{(m)}\sim G(n,m)$ has density $p$.  Let us also abbreviate $\Ex{N_{\triangle}(G_{(m)})}$ to $\mu_{n,m}$.   Note $\mu_{n,m}=p^3\binom{n}{3}$.  The main result of~\cite{GGS} in this context states that\footnote{In verifying that this corresponds to the result of~\cite{GGS}, note that triangles are counted with multiplicity in that article.}
\[
\pr{N_{\triangle}(G_{(m)}) \, <  \, (1-\delta)\mu_{n,m}}\, =\, \expb{\frac{-(1+o(1))\delta^2 \mu_{n,m}}{2(1-p)^2(1+2p)}}\, 
\]
provided $n^{-3/2}\ll \delta\ll n^{-1}$.  This shows that the ``normal tail'' extends far beyond deviations of the order of the standard deviation, $\Theta(n^{3/2})$ (for which $\delta=\Theta(n^{-3/2})$).

In ~\cite{NRS22} the authors used techniques associated with the spectrum of the random graph to prove the following results.  Writing $\gamma_p$ for $\log(p/(1-p))/2(2p-1)$ for $p>1/2$ and $\gamma_{1/2}=1$, then
\[
\pr{N_{\triangle}(G_{(m)}) \, <  \, (1-\delta)\mu_{n,m}}\, =\, \expb{-(1+o(1))\gamma_p \delta^{2/3}p^2n^2}\, 
\]
for $p\in [1/2,1)$ and $n^{-3/4}\ll \delta\ll 1$.  On the other hand, for $p\in (0,1/2)$ they prove that the asymptotic rate lies in the (closed) interval from $\gamma_p \delta^{2/3}p^2n^2$ to $\delta^{2/3}p^2n^2/2p(1-p)$.  

More crudely, if we ignore the exact constants, we may summerise the results of the two articles as follows.  Fix $p\in (0,1)$, and let $G_{(m)}\sim G(n,m)$ for $m=p\binom{n}{2}$ then 
\[ \pr{N_{\triangle}(G_{(m)}) \, <  \, (1-\delta)\mu_{n,m} }\,=\, \begin{cases} 
			\expb{-\Theta\left(\delta^2n^3\right)} & \text{if $n^{-3/2}\ll \delta\ll n^{-1}$} \\  
			\expb{-\Theta(\delta^{2/3}n^2) } & \text{if $n^{-3/4} \ll \delta \ll 1$\, .}
		\end{cases} 
		\phantom{\Bigg|}
		\]
		
In the ``missing regime'' where $n^{-1}\ll \delta\ll n^{-3/4}$ the article~\cite{NRS22} provides an upper bound of the form
$\expb{-\Omega\left(\delta^2n^3\right)}$ and a lower bound of the form $\expb{-O(\delta^{2/3}n^2)}$, see the discussion in Section 3.1 of~\cite{NRS22}.  These bounds are shown as the green and red lines in Figure~\ref{fig:graphic} respectively.

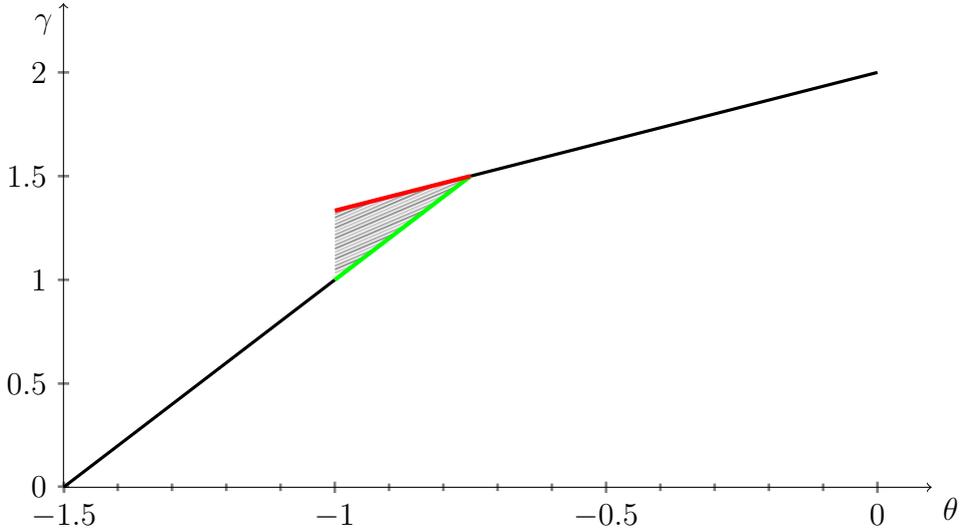
\begin{figure}\label{fig:graphic}
		\begin{tikzpicture}
			
			

			\begin{axis}[xmin=0,ymin=0,xmax=160,ymax=70,axis lines=middle, 
				standard,  axis line style={->},
				xlabel=$\theta$,
				ylabel=$\gamma$,
				minor xtick={0.1,10,20,30,40,60,70,80,90,110,120,130,140},
				tick style={line width=1pt},
				xtick={0.1,50,100,150},
				xticklabels={$\vspace{1mm} -1.5$, $-1$, $-0.5$, $0$},
				ytick={0.1,15,30,45,60},
				yticklabels={$0$,$0.5$,$1$,$1.5$,$2$}]

				\addplot[name path=GGS,very thick] coordinates {(0,0) (50,30)};
				\addplot[name path=NRS,very thick] coordinates {(75,45) (150,60)};

				\begin{scope}                                                 
					\clip (50,30) -- (75,45) -- (50,40) -- cycle;
					\foreach \y in {30, 33,...,60} {\addplot[thick,dmgr] coordinates{(50,\y) (80,10+\y)};}
					\foreach \y in {31, 34,...,60} {\addplot[thick,lgr] coordinates{(50,\y) (80,10+\y)};}
					\foreach \y in {32, 35,...,60} {\addplot[thick,mgr] coordinates{(50,\y) (80,10+\y)};}
					\foreach \y in {32.5, 35.5,...,60} {\addplot[thick,lgr] coordinates{(50,\y) (80,10+\y)};}
					\foreach \y in {33.5, 36.5,...,60} {\addplot[thick,mgr] coordinates{(50,\y) (80,10+\y)};}
					\foreach \y in {31.5, 34.5,...,60} {\addplot[thick,dmgr] coordinates{(50,\y) (80,10+\y)};}
					
				\end{scope}    
				
				\addplot[name path=missingU, color=green, ultra thick] coordinates {(50,30) (75,45)};
				\addplot[name path=missingL, color=red, ultra thick] coordinates {(50,40) (75,45)};

				
				
			\end{axis}
			
		\end{tikzpicture}		
		\caption{In this figure, the $\theta$-axis parameterises $\delta$ as $\delta=n^{\theta}$, and the $\gamma$-axis parameterises the log-probability $-\log{\pr{N_{\triangle}(G_{(m)})<(1-\delta)\mu_{n,m}}}=n^{\gamma}$.  The black lines show the results proved in~\cite{GGS} and~\cite{NRS22} respectively.  The green and red lines show the upper and lower bounds of~\cite{NRS22} in the interval $-1\le \theta\le -3/4$.  Our main theorem shows that the log-probability follows the green line in this interval.} 
	\end{figure}
	
Furthermore, they conjectured that the correct order in this range is $\delta^2n^3$.  Our main result confirms that this is indeed the case.

\newpage

\begin{theorem}\label{thm:main}	 
		Let $\lambda \in (0,1)$.  There exists a constant $C>0$ such that the following holds.  Let $\delta=\delta_n$ be such that $n^{-1}\le \delta\le C^{-1}n^{-3/4}$ and let $m=m_n$ be such that $\lambda \binom{n}{2}\le m\le (1-\lambda)\binom{n}{2}$, then
		\[ 
		\pr{N_{\triangle}(G_{(m)}) \, < \, (1-\delta) \mu_{n,m}} \, \ge\, \exp(-C\delta^2 n^3)\, .
		\]
	\end{theorem}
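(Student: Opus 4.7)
The natural approach for a lower bound on a lower tail is exponential tilting. Writing $\pi$ for the uniform law on $G(n,m)$, I would define a tilted measure $\nu\propto \pi\,\exp(-\beta N_\triangle)$ and choose $\beta$ so that $\mathbb{E}_\nu[N_\triangle]$ is approximately $(1-\delta)\mu_{n,m}$. A formal second-order expansion of the log-Laplace transform of $N_\triangle$ under $\pi$ gives $\beta\approx \delta\mu_{n,m}/\Var_\pi(N_\triangle)=\Theta(\delta)$ and $D(\nu\|\pi)\approx \tfrac12\beta^2\Var_\pi(N_\triangle)=\Theta(\delta^2 n^3)$; this together with the standard tilting inequality $\pi(A)\ge e^{-O(D(\nu\|\pi))}$ (valid when $\nu(A)\ge\tfrac12$) would yield the claimed $\pr{N_\triangle(G_m)<(1-\delta)\mu_{n,m}}\ge\exp(-\Theta(\delta^2 n^3))$.

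Making this rigorous throughout $n^{-1}\le\delta\le C^{-1}n^{-3/4}$ requires justifying the Gaussian approximation. I would first perform a Hoeffding-type decomposition $N_\triangle-\mathbb{E}_\pi[N_\triangle]=L_2+L_3$, noting that the would-be linear piece $p^2(n-2)\sum_e (X_e-p)$ vanishes identically in $G_m$ by the hard edge-count constraint. A direct computation identifies the quadratic part as $L_2=\tfrac{p}{2}\sum_v(d_v-(n-1)p)^2-\text{const}$, a function of the degree sequence alone; since $L_2\ge -\Theta(n^2)$ (the extreme being achieved when all degrees equal $(n-1)p$), the degree-sequence route can only account for a triangle deficit of order $n^2$, i.e.\ $\delta=O(1/n)$. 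This is exactly the ceiling reached by~\cite{GGS}, and for $\delta\gg 1/n$ the bulk of the deficit has to come from a genuine negative deviation of the cubic residual $L_3$.

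To handle $L_3$ I would replace $N_\triangle$ in the exponential tilt by a linear surrogate $L$, built from degree and codegree statistics, which (i)~is asymptotically normal on the range $\delta\le n^{-3/4}$ by the paper's preparatory theorems on the joint normality of degrees and codegrees, and (ii)~satisfies $|N_\triangle-L|=o(\delta\mu_{n,m})$ with probability at least $\tfrac12$ under both $\pi$ and $\nu$. Because $L$ is (approximately) Gaussian, its log-Laplace transform is essentially quadratic and the tilt computation reduces to an explicit Gaussian calculation yielding $D(\nu\|\pi)=(1+o(1))(\delta\mu_{n,m})^2/(2\Var_\pi(L))=\Theta(\delta^2 n^3)$, while concentration of $L$ under $\nu$ places the tilted mass in the target event $\{N_\triangle\le(1-\delta)\mu_{n,m}\}$ with probability at least $\tfrac12$.

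The main obstacle is the comparison step: one must show that under $\nu$ the difference $N_\triangle-L$ really is negligible. A naive cubic error bound on the cumulant expansion of $N_\triangle$ gives $\beta^3\cdot O(n^4)=O(\delta^3 n^4)$, which only beats the quadratic $\delta^2 n^3$ when $\delta\ll 1/n$---exactly the GGS ceiling. Extending past this ceiling therefore demands exploiting cancellations in the higher cumulants of $N_\triangle$, cancellations which should come from the joint normality of the degree and codegree vectors; the endpoint $\delta=C^{-1}n^{-3/4}$ is precisely where these cancellations cease to suffice, matching the NRS phase transition to the planted rate $\exp(-\Theta(\delta^{2/3}n^2))$.
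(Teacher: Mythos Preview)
Your proposal is not a proof but a strategy outline, and the outline breaks down exactly where you say it does. You correctly compute that the naive cubic cumulant term is $O(\delta^3 n^4)$ and only beats the quadratic $\delta^2 n^3$ when $\delta\ll n^{-1}$; you then assert that ``exploiting cancellations in the higher cumulants'' would push through to $\delta\le C^{-1}n^{-3/4}$, but you give no mechanism for producing or quantifying these cancellations. The sentence ``cancellations which should come from the joint normality of the degree and codegree vectors'' is a hope, not an argument. Moreover, the paper's normality result (Proposition~\ref{prop:syngm}) is not a joint CLT for all codegrees: it says that for each fixed vertex $u$, the empirical distribution of synergies $S_{G_0}(u,w)$ over non-neighbours $w$ is close to normal. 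This is a much weaker, one-dimensional statement, and it is not clear how it would feed into a cumulant expansion of $N_\triangle$ under an exponential tilt.

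The paper's proof sidesteps the tilting analysis entirely and is worth contrasting with your approach. Rather than tilting by $\exp(-\beta N_\triangle)$, the paper splits the edge process into $G_0$ (the first $m_0=(1-\eta)m$ edges) and $G_1$ (the last $\eta m$ edges), lets $F_-$ be the half of the non-edges of $G_0$ with smallest synergy, and conditions on the elementary event $E_\alpha$ that $G_1$ places $(1+\alpha)m_1/2$ of its edges in $F_-$. This event is a single hypergeometric point probability, so $\pr{E_\alpha}\ge\exp(-O(\alpha^2 n^2))$ with no cumulant expansion needed. The work is then to compute $\Ex{N_\triangle\mid E_\alpha}$ by splitting triangles according to how many edges lie in $G_1$. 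The normality of synergies is used only to show that $F_-$ is nearly regular, which is what makes the $(2,1)$, $(1,2)$ and $(0,3)$ contributions tractable; and the free parameter $\eta$ is taken small so that the potentially dangerous $(1,2)$ and $(0,3)$ terms carry extra powers of $\eta$ and are beaten by the favourable $(2,1)$ term. This ``plant a simple event and compute the conditional mean'' approach replaces the delicate control of $N_\triangle-L$ under a tilted measure---the step you could not carry out---by explicit finite calculations, and the choice of synergy over codegree is precisely what makes the error terms in those calculations close up.
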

	
There is a sense in which our result is natural, as it shows that the log probability continues along its existing path up until $\delta=\Theta(n^{-3/4})$, as shown in Figure~\ref{fig:graphic}.  However, this was not at all a forgone conclusion.  In a surprising result, ~\cite{NRS22} showed that for $C_5$, and longer odd cycles, their results hold down to $\delta = \Theta(n^{-1})$, at which point there is a jump discontinuity in the log-probability.

The behaviour for the even cycle $C_4$ is more extreme, as a well known calculation using the Cauchy-Schwarz inequality shows that a deviation below $(1-\delta)\mu$ is \emph{impossible} for some $\delta=\Theta(n^{-1})$.  The situation for other bipartite graphs may well be similar, and is related to Siderenko's conjecture~\cite{Sid}.  

On the other hand, large deviations are always possible for counts of non-bipartite graphs $H$, since there is always some probability that $G_{(m)}$ contains a linear sized complete bipartite subgraph, which causes there to be fewer than $(1-\delta)\mu$ copies of $H$ for some constant $\delta>0$. Again, we refer the reader to Kozma and Samotij~\cite{KS} for a more details.

The problem of lower tail moderate deviations, for general graphs $H$, in the range $n^{-1}\ll \delta\ll 1$, is wide open.  And it would be of interest to determine the behaviour of lower tails of subgraph counts in general across this range.  Related to the ``continuity'' question, we make the following conjecture -- there is a ``discontinuity'' in the lower tail log probability for the graph $H$, around $\delta=n^{-1}$, if and only if $H$ is triangle-free.  Let $N_{H}(G)$ denote the number of copies of $H$ contained in $G$.  Here is a more formal statement.

\begin{conjecture}\label{conj:cont} Let $p\in (0,1)$ be a constant, and let $m=p\binom{n}{2}$.   Let $\mu_{H}=\Ex{N_H(G_{(m)})}$.  Then the function
\[
\beta\,\, \mapsto\, \, \frac{\log{\log{\pr{N_{H}(G_{(m)})\, <\, (1-n^{\beta})\mu_H}^{-1}}}}{\log{n}}
\]
has a jump discontinuity at $\beta=-1$, if and only if $H$ is triangle-free.
\end{conjecture}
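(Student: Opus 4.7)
We sketch a plan for the two directions of the biconditional; they have quite different character. Write $\gamma(\beta)$ for the limiting quantity of the conjecture, so that heuristically $\mathbb{P}(N_H(G_m)<(1-n^\beta)\mu_H)=\exp(-n^{\gamma(\beta)+o(1)})$.

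\emph{If $H$ contains a triangle.} The plan is to show $\gamma(\beta)=2\beta+3$ on an open interval containing $\beta=-1$, so $\gamma(-1^-)=\gamma(-1^+)=1$. A standard covariance calculation in $G(n,m)$ gives $\mathrm{Var}(N_H)=\Theta(n^{2v(H)-3})$ (the ``edge-sum'' component that dominates in $G(n,p)$ is killed by conditioning on $e(G_m)=m$), so $\mu_H^2/\mathrm{Var}(N_H)=\Theta(n^3)$ and a moderate-deviation CLT yields $\gamma(\beta)=2\beta+3$ for $\beta\in(-3/2,-1)$. To continue past $\beta=-1$ one reduces to triangles: fix a triangle $T\subseteq H$ and write $N_H=\sum_{T'}X_{T'}$, where the sum is over triangles $T'$ of $G_m$ and $X_{T'}$ counts extensions of $T'$ to a copy of $H$. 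The $X_{T'}$'s should concentrate around a common value of order $n^{v(H)-3}$, so the event $\{N_\triangle(G_m)<(1-c\delta)\mu_{n,m}\}$ supplied by \cite{GGS} and Theorem~\ref{thm:main} forces $\{N_H<(1-\delta)\mu_H\}$ with high conditional probability, giving $\gamma(\beta)\le 2\beta+3$ for $\beta\in[-1,-1+\eta)$. The matching lower bound on $\gamma(\beta)$ (equivalently, the matching upper bound on $\mathbb{P}$) in this window should follow by combining the Hoeffding decomposition of $N_H$ with the normal approximation of degree and codegree statistics established in the main body of the present paper.

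\emph{If $H$ is triangle-free.} The same Gaussian argument gives $\gamma(\beta)=2\beta+3$ for $\beta<-1$, so $\gamma(-1^-)=1$. One must show $\gamma_+:=\liminf_{\beta\to -1^+}\gamma(\beta)>1$. For bipartite $H$, Sidorenko-type inequalities (known for $K_{s,t}$, trees, even cycles, and many more) combined with a cut-metric stability argument should force $\mathbb{P}(N_H<(1-\delta)\mu_H)=0$, or at least $\exp(-\omega(n))$, whenever $\delta\gg n^{-1}$, generalising the Cauchy--Schwarz obstruction noted for $C_4$ and producing an ``infinite'' jump. For non-bipartite triangle-free $H$, the NRS spectral argument already delivers $\gamma(\beta)=(2/3)\beta+2$ on a right-neighbourhood of $-1$ when $H\in\{C_5,C_7,\dots\}$, giving $\gamma_+=4/3>1$; one would extend this to general non-bipartite triangle-free $H$ by (i) planting a sparse bipartite hole of size $\Theta(\delta^{1/3}n)$, whose entropic cost in $G(n,m)$ is $\exp(-\Theta(\delta^{2/3}n^2))$, to yield the upper bound $\gamma(\beta)\le (2/3)\beta+2$ via an $H$-counting lemma applied to the planted perturbation, and (ii) proving a matching stability result asserting that any $\delta\gg n^{-1}$ deviation of $N_H$ must arise from the presence of such a hole, giving the complementary bound $\gamma(\beta)\ge (2/3)\beta+2-o(1)$.

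The principal obstacles are: in the first direction, pushing the Gaussian upper bound on $\mathbb{P}$ into the window $\beta\in[-1,-1+\eta)$, which requires extending the normal approximation of $N_H$ past the standard-deviation scale by leveraging the normal approximation of degree and codegree statistics; and in the second direction, the step (ii) just described -- proving a universal structural/stability theorem for triangle-free $H$ showing that lower-tail deviations with $\delta\gg n^{-1}$ can occur only from an identifiable planted low-density region whose entropic cost strictly exceeds $n$. Both obstacles are genuine, and either would already be of independent interest.
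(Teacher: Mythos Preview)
This statement is presented in the paper as a \emph{conjecture}, not a theorem; the paper offers no proof and explicitly flags the problem as wide open. So there is no ``paper's own proof'' to compare against, and what you have written is (as you yourself say) a programme rather than a proof. That said, here is where the programme has genuine gaps.

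For the direction ``$H$ contains a triangle $\Rightarrow$ no jump'', your black-box reduction to triangles is the weak point. Writing $N_H=\sum_{T'}X_{T'}$ and asserting that the extension counts $X_{T'}$ concentrate is fine for a \emph{typical} $G_m$, but you then want to intersect with the rare event $\{N_\triangle<(1-c\delta)\mu_{n,m}\}$, and there is no reason the $X_{T'}$ remain concentrated under that conditioning. A more honest route is to reuse the paper's event $E_\alpha$ directly: compute $\Ex{N_H\mid E_\alpha}$ by the same $(3,0)/(2,1)/(1,2)/(0,3)$ case analysis, and check that the synergy bias produces a gain of order $\alpha\eta n^{v(H)-1/2}$ from copies of $H$ in which a fixed triangle has two edges in $G_0$ and one in $G_1$. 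This is plausible but is real work, not a corollary. Separately, you do not need the ``matching lower bound on $\gamma(\beta)$'' in the window $\beta\in[-1,-1+\eta)$: since $\delta\mapsto\Pr(N_H<(1-\delta)\mu_H)$ is monotone, $\gamma$ is non-decreasing, so $\gamma(-1^+)\ge\gamma(-1^-)=1$ comes for free once the Gaussian regime is established on the left; only the bound $\gamma(-1^+)\le 1$ requires an argument.

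For the direction ``$H$ triangle-free $\Rightarrow$ jump'', both pillars are open problems. For bipartite $H$ you invoke Sidorenko-type inequalities, but Sidorenko's conjecture is unresolved in general, and even where it is known one still needs a quantitative stability version in $G(n,m)$ to rule out deviations with $\delta\gg n^{-1}$. For non-bipartite triangle-free $H$, the upper bound $\gamma(\beta)\le(2/3)\beta+2$ via a planted bipartite hole is reasonable, but your step (ii) --- a structural theorem saying \emph{every} lower-tail deviation with $\delta\gg n^{-1}$ must come from such a hole --- is exactly the kind of stability result that is missing in the literature (NRS handle odd cycles via spectral arguments that do not obviously generalise). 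You correctly identify these as the principal obstacles; they are not minor technicalities but the heart of the conjecture.
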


It may well be of interest to investigate similar lower tail problems for other discrete combinatorial problems, such as arithmetic progressions in uniform random sets.  Some results may be found in~\cite{Irreg} and~\cite{Reg} for this model.  We remark that lower tail probabilities are often much larger in the Binomial random model, see~\cite{JW}.

In the following section we give an overview of the proof, and a summary of the layout of the article.

\section{An overview of the proof}\label{sec:overview}

Our approach starts with the naively simple idea that fewer triangles are created if many of the edges added are in few triangles (i.e., have small codegree).  However, it turns out to be surprisingly difficult to make this approach rigorous, and so we shall consider (and condition on) a slightly different event.

We now attempt to explain intuitively how and why our proof works.  We also introduce notation that will be used throughout the article.  Set $N:=\binom{n}{2}$.  We shall use the notation $\pm K$ to denote an error of up to $|K|$, so that $a=b\pm K$ means $a\in [b-K,b+K]$.

Our objective is to prove a lower bound on the probability, 
\[
\pr{N_{\triangle}(G_{(m)}) \, < \, (1-\delta)\mu_{n,m}}\, ,
\]
that $G_{(m)}$ contains many fewer triangles than expected, where $n^{-1}\le \delta\le C^{-1}n^{-3/4}$.  We achieve this by defining an event, which is not too rare, and on which the conditional expectation of the triangle count is much smaller.  Given an event $E$ such that 
\[
\Ex{N_{\triangle}(G_{(m)})\, |\, E}\, \le \, (1-2\delta)\mu_{n,m}
\]
It follows from Markov's inequality that
\[
\pr{N_{\triangle}(G_{(m)}) \, \ge \, (1-\delta)\mu_{n,m} | E}\, \le \, \frac{(1-2\delta)\mu_{n,m}}{(1-\delta)\mu_{n,m}}\, \le\, 1-n^{-1}\, .
\]
Considering the complementary event, we have $\pr{N_{\triangle}(G_{(m)}) \, < \, (1-\delta)\mu_{n,m} | E}\ge n^{-1}$, from which it follows that 
\[
\pr{N_{\triangle}(G_{(m)}) \, < \,(1-\delta)\mu_{n,m}}\ge n^{-1}\pr{E}\, .
\]
In summary, to prove Theorem~\ref{thm:main} it suffices to prove the following proposition.

\begin{restatable}{prop}{propmain}\label{prop:main} Let $\lambda \in (0,1)$.  There exists a constant $C>0$ such that the following holds.  Let $\delta=\delta_n$ be such that $n^{-1}\le \delta\le C^{-1}n^{-3/4}$ and let $m=m_n$ be such that $\lambda N\le m\le (1-\lambda)N$, then there exists an event $E$ such that
	\begin{enumerate}
		\item[(i)] $\pr{E}\, \ge \, \exp(-C\delta^2 n^3)$ and
		\item[(ii)] $\Ex{N_{\triangle}(G_{(m)})|E}\, \le\, (1-2\delta)\mu_{n,m}$.
	\end{enumerate}
\end{restatable}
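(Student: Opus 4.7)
The plan hinges on the following symmetry identity. Fix three distinct labeled vertices $u_0, v_0, w_0$, and let $T_0$ denote the event that the three edges $u_0v_0, u_0w_0, v_0w_0$ are all present in $G_m$ (i.e., that the triangle on these vertices is contained in $G_m$). For any event $E$ that depends symmetrically on the vertex labels,
\[
\Ex{N_\triangle \cdot \mathbf{1}_E} \,=\, \binom{n}{3}\,\pr{T_0 \cap E} \,=\, \mu_{n,m}\,\pr{E\mid T_0}\, ,
\]
so that $\Ex{N_\triangle\mid E} = \mu_{n,m}\cdot\pr{E\mid T_0}/\pr{E}$. Condition (ii) of the proposition thus reduces to: find a symmetric event $E$ for which planting a triangle decreases $\pr{E}$ by at least a factor of $1 - 2\delta$.

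The statistic we use is the cherry count $P_2(G_m) := \sum_v \binom{d_v}{2}$, i.e., the number of paths of length $2$. This is an affine function of $\sum_v d_v^2$, hence of the degree variance (since $\sum d_v = 2m$ is fixed). Standard moment calculations give $\mu_{P_2} := \Ex{P_2} = \Theta(n^3 p^2)$ and $\sigma_{P_2}^2 := \Var(P_2) = \Theta(n^4 p^3)$; conditioning on $T_0$ raises each of $d_{u_0}, d_{v_0}, d_{w_0}$ by $2$ and thereby shifts the mean of $P_2$ upwards by
\[
\Delta\,:=\,\Ex{P_2\mid T_0} - \mu_{P_2} \,=\, \Theta(np)\, ,
\]
while leaving the variance essentially unchanged to leading order. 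We then take the Gaussian window
\[
E \,:=\, \bigl\{ P_2 \in [\mu_{P_2} - s - \sigma_{P_2},\, \mu_{P_2} - s + \sigma_{P_2}]\bigr\}\, , \qquad s \,:=\, C_0\,\delta n^3/p\, ,
\]
with $C_0 = C_0(\lambda)$ large. Under approximate Gaussianity of $P_2$ in $G_m$, both unconditionally and conditional on $T_0$ (the latter distribution being the former translated by $\Delta$ to leading order), a density-ratio calculation gives
\[
\frac{\pr{E\mid T_0}}{\pr{E}}\,\leq\,\expb{-\frac{s\Delta}{\sigma_{P_2}^2} + O\!\left(\frac{\Delta^2}{\sigma_{P_2}^2}\right)}\, .
\]
Here $s\Delta/\sigma_{P_2}^2 = \Theta(C_0\delta/p^3)$, which is at least $3\delta$ when $C_0$ is large (using $p\geq\lambda$), while $\Delta^2/\sigma_{P_2}^2 = O(n^{-2})$ is negligible; this delivers (ii). For (i), a matching Gaussian lower bound yields $\pr{E}\geq\expb{-O(s^2/\sigma_{P_2}^2)} = \expb{-O(\delta^2 n^2)}$, which is comfortably larger than $\expb{-C\delta^2 n^3}$ throughout $n^{-1}\leq\delta\leq C^{-1} n^{-3/4}$.

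The main obstacle is establishing the approximate Gaussianity of $P_2$ at the precision required: a moderate deviation local CLT valid up to $s/\sigma_{P_2} = \Theta(\delta n/p)$ standard deviations below the mean (reaching $\Theta(n^{1/4}/p)$ at the upper end of the range), with multiplicative error $o(\delta)$ both unconditionally and conditional on $T_0$. The classical CLT only controls the bulk; what is needed here is a Gaussian density approximation of sufficient accuracy. A natural approach is via characteristic functions or a fine moment expansion, appealing to the normality of the degree sequence (and related codegree statistics, whose normality the abstract advertises as a technical by-product of the proof) to handle $P_2$, which is quadratic in the degrees, together with its distributional shift under the planting of $T_0$.
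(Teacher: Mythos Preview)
Your symmetry identity is correct and elegant, but the subsequent calculation imports $G(n,p)$ intuition into $G(n,m)$ and this is fatal.

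First, in $G(n,m)$ the shift $\Delta=\Ex{P_2\mid T_0}-\Ex{P_2}$ is $O(1)$, not $\Theta(np)$. Writing $d_v=\bar d+D_v$ with $\bar d=2m/n$ and $\sum_v D_v=0$, one has
\[
P_2\,=\,\frac{2m^2}{n}-m+\frac{1}{2}\sum_v D_v^2\,,
\]
so only $\sum_v D_v^2$ is random. Planting $T_0$ does raise $d_{u_0},d_{v_0},d_{w_0}$ by about $2(1-p)$ each, but since $\sum_v d_v=2m$ is fixed the other $n-3$ degrees drop by $\Theta(1/n)$ each; the $\Theta(n)$ contributions to $\Ex{\sum D_v^2}$ cancel exactly, leaving $\Delta=O(1)$. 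For the same reason $\sigma_{P_2}^2=\frac14\Var\bigl(\sum_v D_v^2\bigr)=\Theta(n^3)$ in $G(n,m)$, not $\Theta(n^4)$: the $\Theta(n^4)$ you quote is the $G(n,p)$ variance, and it is driven entirely by the fluctuation of the edge count, which is absent here.

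Second, and decisively: with the correct parameters the ratio $\pr{E\mid T_0}/\pr{E}\approx\exp(-s\Delta/\sigma_{P_2}^2)$ forces $s=\Theta(\delta\,\sigma_{P_2}^2/\Delta)=\Theta(\delta n^3)$. But from the display above, $P_2\ge \mu_{P_2}-\frac12\Ex{\sum_v D_v^2}=\mu_{P_2}-\Theta(n^2)$ \emph{deterministically} (this is just convexity, the cherry analogue of the Cauchy--Schwarz obstruction for $C_4$). Hence the window $\{P_2\approx\mu_{P_2}-s\}$ is empty once $\delta\gg n^{-1}$, and your construction produces no event at all in the regime $n^{-1}\ll\delta\le C^{-1}n^{-3/4}$ that the proposition is actually about. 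The cherry statistic simply cannot encode lower-tail triangle deviations beyond $\delta=O(n^{-1})$.

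The paper does something structurally different: it reveals $G_0=G_{m_0}$ with $m_0=(1-\eta)m$, lets $F_-$ be the half of the non-edges of $G_0$ with smallest \emph{synergy} $S_{G_0}(u,w)=d_{G_0}(u,w)-p_0 d_{G_0}(u)-p_0 d_{G_0}(w)+p_0^2(n-2)$, and takes $E_\alpha$ to be (essentially) the event that the remaining $m_1$ edges land in $F_-$ with excess $\alpha m_1/2$. The normality result for synergies is used to show $F_-$ is almost regular, which controls the $(1,2)$ and $(0,3)$ triangle contributions; the $(2,1)$ contribution drops by $\Theta(\eta\alpha n^{5/2})$. Setting $\alpha=\Theta(\delta n^{1/2})$ gives both (i) and (ii). This event genuinely lives at probability $\exp(-\Theta(\delta^2 n^3))$, unlike any event defined through $P_2$.
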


What kind of event $E$ might ``cause'' there to be fewer triangles?

We may clearly define $G_{(m)}$ by running the Erd\H{os}-R\'enyi random graph process, which starts with an empty graph on $n$ vertices and to which we add an edge at each step.  It is easy to see that $G_{(m)}$ is the $m$th graph in this process.

Here is a first attempt at defining such an event $E$.  Run the process for some time $m_0$, a little smaller than $m$, and let $G^0$ be the current graph (with $m_0:=p_0N$ edges).  Let $H$ be the graph which represents the non-edges of $G^0$ with relatively small codegree.  Now, a good way to create fewer triangles, in the remainder of the process, would be to include in $G\setminus G^0$ more pairs from $H$ than expected.  Indeed, pairs with smaller codegree create fewer triangles!  This suggests taking $E$ to be the event that the $e(G\cap H)$ is significantly larger than its mean.

However, when calculating the conditional expectation $\Ex{N_{\triangle}(G_{(m)})|E}$ we must consider not only the triangles with two edges in $G^0$ and one ``new'' edge, but also the triangles with two or three ``new'' edges.  The conditional expectations of these latter two turn out to be challenging to calculate.

In fact, the calculations become easier if the graph (in the role of the graph $H$ above) is close to regular.  For this reason we turn to a new parameter, related to codegree, which we call \emph{synergy}. 
	
\begin{definition}
For $p\in(0,1)$, the \emph{p-synergy} of the vertices $u$ and $w$ in a $n$ vertex graph $G$, is 
\begin{equation}\label{synergy}
S^p_G(u,w)\, :=\, d_G(u,w)-pd_G(u)-pd_G(w)+p^2(n-2)\, .
\end{equation}

If $p$ is omitted from the notation, as in $S_{G}(u,w)$, then take $p=e(G)/N$.

If $G$ is clear from context then it too may be omitted, as in $S(u,w)$.
\end{definition} 

Our approach will essentially be as described above, except using synergies instead of codegrees.  The reader will note that synergies are essentially recentered versions of codegrees.  This recentering is useful for some technical reasons.  A key facet of the article, see Section~\ref{sec:syn}, is to show that the sequence of synergies $S(u,w)$ over the non-neighbours, $w\in V(G)\setminus N_{G^0}(u)$, of a given vertex $u$, is, with very high probability, close to normally distributed.

What then is our new attempt at defining the event $E$?  

As before we first run the process for $m_0$ steps to reveal a graph $G^0$ (with $m_0$ edges).  We then let $f_1,\dots f_{N-m_0}$ be all the non-edges of $G^0$ in non-decreasing order of synergy $S(u,w)$.  We may define
\[
F_-\, :=\,\left\{f_i:i\in\left\{1,\dots,\frac{N-m_0}{2}\right\}\right\}\, ,
\]
that is $F_-$ is a graph consisting of the half\footnote{We shall always take $m_0$ such that $N-m_0$ is even.} of non-edges of $G^0$ with smaller synergy. Again it is true that taking pairs of $F_-$ as edges will tend to create fewer triangles.  For this reason we may want to take $E$ to be the event that $e(G^1\cap F_-)$ is much larger than its expected value, where $G^1=G_{(m)}\setminus G^0$ is the graph of edges added in the remainder of the process. 

This is essentially the event $E$ we shall take.  For technical reasons we shall
\begin{enumerate}
\item[(i)] find it useful to include in $E$ some basic quasirandomness properties of the first graph $G^0$, and
\item[(ii)] use a parameter $\alpha$, so that the event is actually $E_{\alpha}$, for some $\alpha>0$.
\end{enumerate}

We now describe the layout of the article.  In Section~\ref{sec:syn}, we prove properties about synergies of pairs of vertices.  In Section~\ref{sec:CoSy}, we prove important results about the graph $F_{-}$, and about the connection between synergies and codegrees.  In Section~\ref{sec:event}, we first define an event $E_0$ which encodes the basic quasiradnomness properties we require of $G^0$ and then we define the events $E_{\alpha}$.  We also prove a lower bound on the probability of $E_{\alpha}$, which corresponds to part (i) of Proposition~\ref{prop:main}.  

Finally, in Section~\ref{sec:proof}, we show that conditioning on the event $E_{\alpha}$ has the correct effect on the expected number of triangles.  Doing so we complete the proof of Proposition~\ref{prop:main}, and therefore complete the proof of Theorem~\ref{thm:main}.  We remark that the main challenge in Section~\ref{sec:proof} is to control the influence of triangles with two or three ``new'' edges, i.e., two or three edges in $G^1$.  In some sense our control of their influence is just about good enough, and so to reduce their influence by a constant factor we take $m_0=(1-\eta)m$ and $m_1:=m-m_0=\eta m$, where we shall choose the constant $\eta>0$ to be sufficiently small.  

\subsection*{Notation}
Throughout $N$ denotes $\binom{n}{2}$ and $\Phi(t)$ is the distribution function of a standard normal random variable.  

Given a graph $G$ and a vertex $u\in V(G)$ we write $N_G(u)$ (or simply $N(u)$) for the neighbourhood of $u$.  We write $d_G(u)$ (or simply $d(u)$) for the degree of $u$.  For a pair $u,w\in V(G)$, we denote by $d_G(u,w)$ (or simply $d(u,w)$) the codegree of $u$ and $w$.  The following table contains quantities which we define later in the paper, and may be useful for reference.

\begin{table}[h]
\begin{tabularx}{\textwidth}{ll}
\toprule
  Synergies\phantom{\bigg{|}} & \hspace{-22mm} First Introduced\\
  $S^p_G(u,w)\, :=\, d_G(u,w)-pd_G(u)-pd_G(w)+p^2(n-2)\phantom{\Big{|}}$ & \ref{synergy} \\
  $\sigma(p)^2\, :=\, \Var\big(S_{G_p}^p(u,w)\mid uw\not\in E(G_p)\big)\phantom{\Big{|}}$\ & \ref{eq:sigmap}\\
  $\tilde{S}^p(u,w)\, :=\, S^p(u,w)/\sigma(p)\phantom{\Big{|}}$& \ref{normalized-synergy}\\
  $\S^{p}_{G}(u)\, :=\, \big(\tilde{S}^{p}_G(u,w)\, :\, w\in V(G)\setminus (N(u)\cup \{u\})\big)\phantom{\Big{|}}$& \ref{eq:psynergy-vector}\\
  $\S_{G}(u)\, :=\, \big(\tilde{S}_G(u,w)\, :\, w\in V(G)\setminus (N(u)\cup \{u\})\big)\phantom{\Big{|}}$&\ref{eq:synergy-vector}\\
  Synergy relative to a subset $I\subseteq V(G)$\phantom{\bigg{|}}\\
  $S_{G}^p(u,w|I)\, :=\, d_{G}(u,w)\, -\, p d_{G}(u)\, -\, p|N_{G}(w)\cap (V\setminus I)|\,  +\, p^2(n-|I|-1)\phantom{\Big{|}}$& \ref{eq:relative-synergy}\\
  Distances between distributions\phantom{\bigg{|}}\\
  $F_{\x}(t)\, :=\, |\{i:x_i\leq t\}|/k \qquad\qquad (\text{for } \x\in \RR^k)\phantom{\Big{|}}$& \ref{eq:xquant}\\
  $d_t(\x,\Phi)\, :=\, \left|F_{\x}(t)-\Phi(t)\right|\phantom{\Big{|}}$ & \ref{tdistance} \\
  $d(\x,\Phi)\, :=\, \sup\{d_t(\x,\Phi)\, :\, t\in \RR\}\phantom{\Big{|}}$& \ref{distance} \\
  $d_t(X,\Phi)\, :=\, d(F_X(t),\Phi(t))\phantom{\Big{|}}$& \ref{eq:tdistance-rv}\\
  $d(X,\Phi)\, :=\, \sup\{d_t(X,\Phi)\, :\, t\in \RR\}\phantom{\Big{|}}$& \ref{eq:distance-rv}\\
\end{tabularx}
\end{table}
\begin{table}[h]
\begin{tabularx}{\textwidth}{ll}
  Degree and Codegree deviations\phantom{\bigg{|}}\\
  $D(u)\, :=\, d(u)-p(n-1)\phantom{\Big{|---------------------}}$& \ref{eq:degree-deviation}\\
  $D(u,w)\, :=\, d(u,w)-p^2(n-2)\phantom{\Big{|}}$& \ref{eq:codegree-deviation}\\
\bottomrule
\end{tabularx}
\end{table}

	
	

	\section{The Synergies are normally normal} \label{sec:syn}

 In this section we prove that, for each vertex $u$, it is very likely that the sequence of synergies $(S_{G^0}(u,w): w\in V(G)\setminus (N_{G^0}(u)\cup \{u\}))$ is close to normally distributed.

Given a vector $\x=(x_1,\dots,x_k)$ we define
	\eq{xquant}
	F_{\x}(t)\, :=\, \frac{|\{i:x_i\leq t\}|}{k}\, ,
	\eqe
	which may be thought of as a distribution function associated with the vector $\x$.  In this context it is natural to consider the distance between this distribution and the distribution function $\Phi$ of the standard normal.  For each $t\in \RR$ we may set
	\begin{equation}\label{tdistance}
	d_t(\x,\Phi)\, :=\, \left|F_{\x}(t)-\Phi(t)\right|
	\end{equation}
	and 
	\begin{equation}\label{distance}
	d(\x,\Phi)\, :=\, \sup\{d_t(\x,\Phi)\, :\, t\in \RR\}\, .
	\end{equation}
In light of these definitions we say that $\x$ is $\eps$-\emph{close} to normal if $d(\x,\Phi)\le \eps$, and $\eps$-\emph{far} from normal if $d(\x,\Phi)>\eps$.

We shall only consider synergies of pairs $u,w$ which are non-neighbours.  Note that $\Ex{S_{G_p}^p(u,w)\mid uw\not\in E(G_p)}=0$, where $G_p\sim G(n,p)$.  Let us also observe the value of the variance
	\eq{sigmap}
	\sigma(p)^2\, :=\, \Var\big(S_{G_p}^p(u,w)\mid uw\not\in E(G_p)\big)\, =\, p^2(1-p)^2(n-2)\, ,
	\eqe
 see the appendix for a proof of this statement.
	  We may now introduce the normalised version of synergy:
	\begin{equation}\label{normalized-synergy}
	    \tilde{S}^p(u,w)\, :=\,  \frac{S^p(u,w)}{\sigma(p)}\, .
	\end{equation}
	Again, we may also write $\tilde{S}_G^p(u,w)$ to clarify which graph is considered.  If $p$ is not specified, as in $\tilde{S}(u,w)$ or $\tilde{S}_G(u,w)$, then the normalisation\footnote{In fact, if $G\sim G(n,m)$ then this normalisation is in some sense ``incorrect'' as we are using the mean and standard deviation associated with the $G(n,p)$ model.  However, as these parameters differ by so little, between the two models, the same normalisation may be used, and this simplifies comparisons between the models.} uses $\sigma(p)$ where $p=e(G)/N$.
	
	Now we can state the main theorem of this section.  Given a vertex $u$ of a graph $G$, let 
	\eq{psynergy-vector}
	\S^{p}_{G}(u)\, :=\, \big(\tilde{S}^{p}_G(u,w)\, :\, w\in V(G)\setminus (N(u)\cup \{u\})\big)\, .
	\eqe
	and
	\eq{synergy-vector}
	\S_{G}(u)\, :=\, \big(\tilde{S}_G(u,w)\, :\, w\in V(G)\setminus (N(u)\cup \{u\})\big)\, .\phantom{\Big|}
	\eqe
	That is, $\S^{p}_{G}(u)$ and $\S_{G}(u)$ are sequences of (normalised) synergies of pairs $uw$ where $w$ varies over non-neighbours of $u$ in $G$.

	
	\begin{prop}\label{prop:syngm}
		Let $\lambda>0$, let $0<\beta<1/4$ and let $\eps=n^{-\beta}$.  Let $G_{(m)}\sim G(n,m)$, where $\lambda N\le m\le (1-\lambda)N$.  Then, for each vertex $u\in V(G_{(m)})$,
		\[
		\pr{\S_{G_{(m)}}(u)\, \text{is } \eps\text{-far from normal}}\, \le\, \exp(-n^{1/2-2\beta-o(1)})\, =\,  \exp(-n^{\Omega(1)})\, .
		\]
		\end{prop}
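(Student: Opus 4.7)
The plan is to fix a threshold $t \in \RR$ and prove the pointwise deviation bound
$$\pr{\bigl| F_{\S_{G_m}(u)}(t) - \Phi(t) \bigr| > \eps/2} \,\le\, \exp\bigl(-n^{1/2 - 2\beta - o(1)}\bigr),$$
then upgrade to the uniform statement by a standard discretisation argument: since $t \mapsto F_{\S_{G_m}(u)}(t)$ is monotone non-decreasing and $\Phi$ is Lipschitz, it suffices to control the deviation at an arithmetic grid of $n^{O(1)}$ values of $t$ in a window $[-C\sqrt{\log n}, C\sqrt{\log n}]$ (outside this window both CDFs lie within $\eps$ of $0$ or $1$), followed by a union bound. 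Throughout, $t$ is fixed; write $b := |V(G_m) \setminus (N(u) \cup \{u\})|$ and $N_t := b\cdot F_{\S_{G_m}(u)}(t)$.

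For the expectation, I would use the identity $S(u,w) = \sum_{v \ne u,w} Y_{uv} Y_{vw}$ with $Y_{ab} := \mathbf{1}[ab \in E(G)] - p$. Passing from $G_m$ to $G(n,p)$ costs only a polynomial factor since $\pr{|E(G_p)| = m} = \Theta(n^{-1})$, so I work in $G(n,p)$, and after conditioning on $N(u) = A$ the synergy $\tilde S(u,w)$ becomes a normalised weighted sum of $n-2$ independent bounded centred random variables $Y_{vw}$ with deterministic weights $Y_{uv} \in \{-p,\,1-p\}$ and total variance $\sigma_p^2$. The Berry-Esseen theorem yields $\bigl|\pr{\tilde S(u,w) \le t} - \Phi(t)\bigr| = O(n^{-1/2})$, whence $\Ex{N_t} = b\Phi(t) + O(n^{1/2})$, comfortably within the deviation budget $\eps b/2 \asymp n^{1-\beta}$.

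For concentration I would use the high-moment method. The key structural observation is that for distinct non-neighbours $w, w'$ of $u$, the variables $\tilde S(u,w)$ and $\tilde S(u,w')$ share only the single edge variable $Y_{ww'}$ in their defining sums; iterating, conditional on the $\binom{k}{2}$ ``shared-edge'' variables $(Y_{w_i w_j})_{i < j}$, the synergies $\tilde S(u,w_1), \ldots, \tilde S(u,w_k)$ are fully independent. Expanding
$$\Ex{(N_t - \Ex{N_t})^{2k}} \,=\, \sum_{(w_1,\ldots,w_{2k})} \Ex{\prod_{j=1}^{2k}\bigl(\mathbf{1}[\tilde S(u,w_j) \le t] - q_{w_j}\bigr)}$$
with $q_{w_j} := \pr{\tilde S(u,w_j) \le t}$, and classifying the $2k$-tuples by their induced shared-edge graph, the aim is to prove $\Ex{(N_t - \Ex{N_t})^{2k}} \le (Ck \cdot n^{3/2 + o(1)})^k$ uniformly in $k$ up to $n^{1/2-2\beta+o(1)}$. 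Markov's inequality with the optimal choice $k = \Theta(n^{1/2-2\beta})$ then delivers the pointwise tail bound.

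The main obstacle is the moment estimate. A direct variance computation already gives $\Var(N_t) = O(n^{3/2})$ (pairwise covariances of the indicators are $O(n^{-1/2})$ by Berry-Esseen applied in the conditional-on-$Y_{ww'}$ picture), so Chebyshev alone yields only polynomial decay. Controlling all higher moments simultaneously demands a careful combinatorial expansion, showing that the dominant contribution to the $2k$-th moment comes from ``Gaussian-like'' pairings of the $w_j$'s (matching the heuristic in which the centred indicators have variance $O(1)$ and pairwise covariance $O(n^{-1/2})$), with the error from more complicated shared-edge configurations controlled uniformly in $k$ and $t$. A worth-exploring alternative is a Freedman/Bennett inequality on the edge-exposure martingale, with the step-variance when revealing edge $vw$ bounded by the Berry-Esseen probability that $\tilde S(u,v)$ or $\tilde S(u,w)$ lies in an $O(n^{-1/2})$-window around $t$: summing over the $m$ edges yields total predictable variance $O(n^{3/2+o(1)})$ and the same tail $\exp(-\Omega(n^{1/2-2\beta-o(1)}))$.
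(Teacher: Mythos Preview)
Your approach differs substantially from the paper's, which bypasses moment calculations entirely via a sampling argument. The paper first transfers to $G(n,p)$ (as you do), then fixes $u$, reveals $N(u)$, and draws a uniformly random size-$k$ subset $I$ (with $k=n^{1/2}$) of the non-neighbours of $u$. It introduces \emph{relative synergies} $S^p(u,w\mid I)$, obtained from $S^p(u,w)$ by discarding the contribution of edges from $w$ into $I$; conditional on $N(u)$ these are genuinely iid across $w\in I$, so Berry--Esseen plus an elementary iid concentration lemma (the empirical CDF of $k$ iid approximately-normal variables is $3\eps$-close to $\Phi$ except with probability $\exp(-\Omega(\eps^2 k))$) shows the sampled vector $(\tilde S^p(u,w))_{w\in I}$ is $(\eps/2)$-close to normal except with probability $\exp(-n^{1/2-2\beta-o(1)})$. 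Three short claims verify that relative and actual synergies differ by $o(\eps)$. Finally, a contrapositive sampling lemma (if the full vector were $\eps$-far from normal, a random size-$k$ sample would be $(\eps/2)$-far with probability $\ge 1/2$) transfers this back to $\S_G^p(u)$. No moment expansion or martingale is needed.

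Your direct route is plausible but leaves the central step unresolved. You correctly observe that conditioning on the shared edge variables $(Y_{w_iw_j})$ renders the synergies independent, and that each singleton factor then has conditional mean $O(r/\sqrt n)$. However, the bound $\Ex{(N_t-\Ex{N_t})^{2k}}\le (Ckn^{3/2+o(1)})^k$, uniformly over $k$ up to $n^{1/2-2\beta}$, is stated as an aim rather than established: carrying it out means summing over all set-partitions of $[2k]$, tracking the number of blocks $r$ and singletons $s$, and checking that the combinatorial count does not overwhelm the $(Cr/\sqrt n)^s$ gain --- doable, but exactly the work you have deferred. (Incidentally your covariance estimate is loose: exploiting both conditional-mean factors gives $\Ex{Z_w Z_{w'}}=O(1/n)$ rather than $O(n^{-1/2})$, so in fact $\Var(N_t)=O(n)$; this only helps.) The Freedman alternative has its own unfinished step: bounding the total predictable variance by $O(n^{3/2+o(1)})$ requires a \emph{conditional} anti-concentration bound for $\tilde S(u,w)$ at every step of the edge exposure, uniform over the filtration, and this does not follow from a single application of Berry--Esseen. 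Either route could likely be pushed through, but the paper's sampling trick sidesteps all of it at the price of a mild indirection.
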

		
		We shall begin by showing that this result follows from the analogous statement for $G\sim G(n,p)$.
		
		\begin{prop}\label{prop:syngp}
		Let $\lambda>0$ and let $p\in [\lambda,1-\lambda]\subseteq (0,1)$.  Let $0<\beta<1/4$ and let $\eps=n^{-\beta}$.  Let $G_p\sim G(n,p)$.  Then, for each vertex $u\in V(G_{p})$,
		\[
		\pr{\S^{p}_{G_p}(u)\, \text{is } \eps\text{-far from normal}}\, \le\, \exp(-n^{1/2-2\beta-o(1)})\, =\,  \exp(-n^{\Omega(1)})\, .
		\]
		\end{prop}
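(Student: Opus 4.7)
The starting point is the identity
\[
S^p_{G}(u,w)\,=\,\sum_{v\in V\setminus\{u,w\}}\xi_{uv}\xi_{wv},\qquad \xi_{ab}:=\mathbf{1}_{ab\in E(G)}-p,
\]
valid whenever $uw\notin E$, which follows by expanding the definition of $S^p$. Condition on $A:=N_{G_p}(u)$; by Chernoff we may assume $|A|=pn\pm O(\sqrt{n\log n})$, the complementary event being negligible on our scale. Let $W:=V(G_p)\setminus(A\cup\{u\})$ and note that $\xi_{uv}=1-p$ for $v\in A$ and $\xi_{uv}=-p$ for $v\in W$, so each synergy decomposes as $\tilde S^p(u,w)=c_w+Z_w$, where
\[
Z_w\,:=\,\frac{1-p}{\sigma_p}\sum_{v\in A}\xi_{wv},\qquad c_w\,:=\,-\,\frac{p}{\sigma_p}\sum_{v\in W\setminus\{w\}}\xi_{wv}.
\]
The decisive idea is now to condition in addition on the $\sigma$-algebra $\F_W$ generated by the edges lying inside $W$: the $c_w$'s become deterministic, while the $Z_w$'s are functions of the \emph{disjoint} families $\{\xi_{wv}:v\in A\}_{w\in W}$, so they are \emph{conditionally independent}. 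Each $Z_w$ is a centred sum of $|A|=\Theta(n)$ bounded independent terms with variance $1-p+o(1)$, hence Berry--Esseen gives
\[
\pr{Z_w\le z\mid\F_W}\,=\,\Phi(z/\sqrt{1-p})+O(n^{-1/2})
\]
uniformly in $z$.

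Fix $t\in\RR$ and set $p_w(t):=\pr{\tilde S^p(u,w)\le t\mid\F_W}=\Phi((t-c_w)/\sqrt{1-p})+O(n^{-1/2})$. Since the indicators $\mathbf{1}[\tilde S^p(u,w)\le t]$ are, conditionally on $\F_W$, independent Bernoulli$(p_w(t))$'s, Hoeffding's inequality yields, for each $\eta>0$,
\[
\pr{\bigl|F_\S(t)-\tfrac{1}{|W|}\sum_w p_w(t)\bigr|>\eta\mid\F_W}\,\le\,2\exp(-2\eta^2|W|),
\]
and therefore $|F_\S(t)-P(t)|\le\eta+O(n^{-1/2})$ on the same event, where
\[
P(t)\,:=\,\frac{1}{|W|}\sum_{w\in W}\Phi\!\left(\frac{t-c_w}{\sqrt{1-p}}\right).
\]
Next, $P(t)$ depends only on the edges inside $W$, and flipping any one such edge shifts two of the $c_w$'s by $\pm p/\sigma_p=O(n^{-1/2})$, hence changes $P(t)$ by at most $O(n^{-3/2})$. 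McDiarmid's inequality over the $\binom{|W|}{2}=O(n^2)$ edges inside $W$ then gives
\[
\pr{|P(t)-\Ex{P(t)}|>\eta}\,\le\,2\exp(-\Omega(\eta^2 n)).
\]
A further appeal to Berry--Esseen shows that the sum defining $c_w$ is $O(n^{-1/2})$-close in Kolmogorov distance to $N(0,p)$; combined with the Gaussian identity $\Ex{\Phi((t-X)/\sqrt{1-p})}=\Phi(t)$ for $X\sim N(0,p)$ (which follows from $\sqrt{1-p}\,Z+X\sim N(0,1)$ for independent standard $Z$), this gives $\Ex{P(t)}=\Phi(t)+O(n^{-1/2})$.

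Choosing $\eta:=\eps/3$ and combining the three displays above, we obtain $|F_\S(t)-\Phi(t)|\le\eps$ at each fixed $t$ off an event of probability $\exp(-\Omega(n^{1-2\beta}))$. For the supremum over $t$, monotonicity of $F_\S$ and $\Phi$ lets one replace $\sup_t$ by the maximum over a net of $O(\eps^{-1})=O(n^{\beta})$ points chosen so that consecutive values of $\Phi$ differ by less than $\eps/2$; a union bound absorbs the factor $n^{\beta}$ into the stretched-exponential rate without damage, comfortably yielding the claimed bound. The main obstacle is the decoupling step itself: without the conditioning on $\F_W$ the synergies are dependent, and a direct application of Azuma or McDiarmid to $|I(t)|$ only produces $2\exp(-\Omega(\eta^2))$ (effectively worthless), because the worst-case effect of flipping a single edge on $|I(t)|$ is $O(1)$. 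The conditional independence afforded by $\F_W$ is precisely the device that converts what would otherwise be a mere variance estimate into the required stretched-exponential concentration.
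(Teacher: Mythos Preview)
Your argument is correct and takes a genuinely different route from the paper's.  One small repair: the deviation threshold $O(\sqrt{n\log n})$ for $|A|$ gives a Chernoff complement of size only $\exp(-\Theta(\log^2 n))$, which is \emph{not} negligible against $\exp(-n^{1/2-2\beta})$.  Take instead $\bigl||A|-pn\bigr|\le \eps n^{3/4}$ (or even $c\eps n$): the complement then has probability $\exp(-\Omega(n^{1/2-2\beta}))$, while the induced variance mismatches in $Z_w$ and $c_w$ remain $o(\eps)$, which is all the rest of your argument needs.

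The paper does not condition on $\F_W$.  Instead it draws a uniformly random sample $I$ of $k=n^{1/2}$ non-neighbours of $u$ and, for $w\in I$, works with the \emph{relative synergy} $S^p(u,w\mid I)$, in which the contribution of edges from $w$ into $I$ is replaced by its mean.  These relative synergies use disjoint edge sets and are therefore i.i.d.\ given $N(u)$; Berry--Esseen plus Hoeffding over the $k$ samples shows the sampled vector is $(\eps/2)$-close to normal except with probability $\exp(-\Omega(\eps^2 k))=\exp(-n^{1/2-2\beta+o(1)})$.  A separate sampling lemma (``if a vector is $2\eps$-far from normal then a random $k$-subsample is $\eps$-far with probability $\ge 1/2$'') transfers this back to the full vector $\S^p_{G_p}(u)$.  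Your two-stage conditioning is more direct: it produces full conditional independence of the $Z_w$'s over all $|W|=\Theta(n)$ non-neighbours, with the residual dependence pushed into the deterministic shifts $c_w$, which you control by McDiarmid and the Gaussian convolution identity $\Ex{\Phi((t-X)/\sqrt{1-p})}=\Phi(t)$.  This actually buys a strictly better rate, $\exp(-\Omega(n^{1-2\beta}))$ versus the paper's $\exp(-n^{1/2-2\beta-o(1)})$ --- the paper in fact remarks that sharper bounds should be available.  The paper's sampling device, conversely, sidesteps the $\Ex{P(t)}$ computation and the associated variance bookkeeping, at the price of the $n^{1/2}$ loss in the exponent coming from the sample size.
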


  \begin{remark} It may well be possible to prove that $\S^{p}_{G_p}(u)$ is $\eps$-close to normal for smaller values of $\eps$, perhaps as small as $O(n^{-1/2})$,  by using properties of the relative synergy $S^p(u,w|I)$ (defined below) and the difference $S^p(u,w)-S^p(u,w|I)$ discussed in Claim 2 of the proof of Proposition~\ref{prop:syngp}.
  \end{remark}

The advantage of the $G(n,p)$ statement is that $G_p\sim G(n,p)$ has more independence.  Let us first observe that Proposition~\ref{prop:syngm} follows from Proposition~\ref{prop:syngp}.
		
\begin{proof}[Proof of Proposition~\ref{prop:syngm}]
Given $m\in [\lambda N, (1-\lambda)N]$, set $p:=m/N\in [\lambda,1-\lambda]$.  We write $\mathbb{P}_p$ and $\mathbb{P}_m$ for the probability measures associated with $G\sim G(n,p)$ and $G\sim G(n,m)$ respectively.  As there is probability at least $n^{-2}$ that $G_p$ has exactly $m$ edges, and such a graph is distributed as $G(n,m)$, we have, for any event $E$ of graphs with $m$ edges:
\[
\mathbb{P}_p(E)\, \ge\, n^{-2}\mathbb{P}_m(E)\, .
\]
Using this inequality and Proposition~\ref{prop:syngp}, we have that
\begin{align*}
\pr{\S_{G_{(m)}}(u)\, \text{is } \eps\text{-far from normal}}\, &=\, \mathbb{P}_m(\S_{G}(u)\, \text{is } \eps\text{-far from normal})\phantom{\big|}\\
&=\, \mathbb{P}_m(\S^p_{G}(u)\, \text{is } \eps\text{-far from normal})\phantom{\big|}\\
&\le\, n^2\mathbb{P}_p(\S^p_{G}(u)\, \text{is } \eps\text{-far from normal})\phantom{\big|}\\
&\le\, n^2\exp(-n^{1/2-2\beta-o(1)})\, .\phantom{\big|}
\end{align*}
As $n^2$ is of the form $\exp(n^{o(1)})$, this gives the required bound.
\end{proof}

We now turn to the task of proving Proposition~\ref{prop:syngp}.  We must prove that, with very high probability, the vector $\S^{p}_{G}(u)$ is close to normal.  For the remainder of the section $G\sim G(n,p)$.

The main challenge is that, even in $G(n,p)$, the (normalised) synergies $\tilde{S}^{p}_{G}(u,w)$, which are the entries of $\S^{p}_{G}(u)$, are not independent.  Our task would be significantly easier if they were independent, see Lemma~\ref{lem:indep}, or even if they could be closely approximated by a sequence of independent random variables.

While we are not able to approximate the whole sequence $\S^{p}_{G}(u)$ by a sequence of independent random variables, we are able to do so if we first restrict to a subsequence -- a small random sample of the entries of $\S^{p}_{G}(u)$.

In Section~\ref{subsec : indep}, we prove a lemma (Lemma~\ref{lem:indep}) about the case of a sequence of independent random variables.  In Section~\ref{subsec : sampling}, we prove a lemma (Lemma~\ref{lem:samp1}) which implies that a sampling can be used to control the whole.  Finally, we show in Section~\ref{subsec : synergies} that these results imply that $\S^{p}_{G}(u)$ is likely to be close to normal, proving Proposition~\ref{prop:syngp}.

We remark that synergies are not really so complex as random variables, especially after we reveal one of the neighbourhoods.  This is illustrated in Figure~\ref{fig:syndraw}.


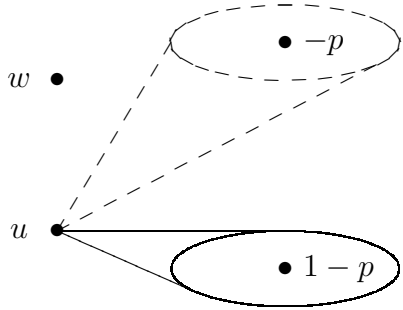
\begin{figure}\label{fig:syndraw}
		$$	
		\begin{xy}
			(0,0)*{\bullet};
			(0,20)*{\bullet};
			(-5,0)*{u};
			(-5,20)*{w};
			(30,-5)*\xycircle(15,5){};
			(0,0)*{}; (18,-8)*{} **\dir{-};
			(0,0)*{}; (30,0)*{} **\dir{-};
			(30,25)*\xycircle(15,5){--};
			(30,-5)*{\bullet};
			(35,25)*{-p};
			(37,-5)*{1-p};
			(30,25)*{\bullet};
			(0,0)*{}; (15,25)*{} **\dir{--};
			(0,0)*{}; (42,22)*{} **\dir{--};
		\end{xy}
		$$
		\caption{In this figure we illustrate the fact that, conditioned on the neighbourhood of $u$, $N_{G_p}(u)$, each edge from $w$ adds $1-p$ to the synergy $S_{G_p}^p(u,w)$ if the other end is in $N_{G_p}(u)$ and $-p$ if not.}
	\end{figure}

\subsection{Sequences of iid random variables}\label{subsec : indep}

The definition of distance from normal applies even more naturally to random variables.  Let $F$ be the distribution function of a random variable $X$.  Then we may define
\eq{tdistance-rv}
d_t(X,\Phi)\, :=\, \left|F(t)-\Phi(t)\right|
\eqe
and 
\eq{distance-rv}
d(X,\Phi)\, :=\, \sup\{d_t(X,\Phi)\, :\, t\in \RR\}\, .\phantom{\big|}
\eqe
In fact, this is known as the Kolmogorov distance between the distributions.

\begin{lem}\label{lem:indep}
Let $\eps>0$ and $k\ge 1$.  Let $X$ be a random variable with $d(X,\Phi)\le \eps$.  Let $\XX=(X_1,\dots ,X_k)$ be a sequence of iid copies of $X$.   Then
\[
\pr{\XX \text{ is } 3\eps\text{-far from normal}}\, \le\,  2\eps^{-1}\, \exp\left(\frac{-\eps^2 k}{2}\right)\, .
\]
\end{lem}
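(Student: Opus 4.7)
The plan is a standard discretisation argument: reduce the supremum over $t\in\RR$ appearing in $d(\XX,\Phi)$ to a maximum over a finite grid, and then control each grid point by Hoeffding plus a union bound. The only minor care needed is keeping track of where each factor of $\eps$ comes from so as to land on $3\eps$ on the left and $2\eps^{-1}\exp(-\eps^2 k/2)$ on the right.

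\textbf{Step 1 (grid).} Set $L:=\lceil 1/\eps\rceil$ and choose $t_1<\cdots<t_{L-1}$ with $\Phi(t_i)=i/L$, extending by $t_0:=-\infty$, $t_L:=+\infty$. By construction $\Phi(t_{i+1})-\Phi(t_i)\le \eps$ for all $i$, and the endpoint values of $F_\XX$ and $\Phi$ agree trivially, so only the $L-1\le 1/\eps$ interior points need be controlled.

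\textbf{Step 2 (discretisation).} For any $t\in[t_i,t_{i+1}]$, monotonicity of $F_\XX$ and $\Phi$ gives
\[
F_\XX(t_i)-\Phi(t_{i+1})\,\le\, F_\XX(t)-\Phi(t)\,\le\, F_\XX(t_{i+1})-\Phi(t_i),
\]
and since $\Phi(t_{i+1})-\Phi(t_i)\le\eps$, we obtain
\[
d(\XX,\Phi)\,\le\, \max_{1\le j\le L-1}\bigl|F_\XX(t_j)-\Phi(t_j)\bigr|+\eps.
\]
Combining with the hypothesis $d(X,\Phi)\le \eps$ (so $|F_X(t_j)-\Phi(t_j)|\le \eps$), we see
\[
d(\XX,\Phi)\,\le\, \max_{1\le j\le L-1}\bigl|F_\XX(t_j)-F_X(t_j)\bigr|+2\eps,
\]
so the event $\{\XX\text{ is }3\eps\text{-far from normal}\}$ is contained in
\[
\bigcup_{j=1}^{L-1}\bigl\{\bigl|F_\XX(t_j)-F_X(t_j)\bigr|>\eps\bigr\}.
\]

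\textbf{Step 3 (Hoeffding and union bound).} For each fixed $j$, $kF_\XX(t_j)$ is a sum of $k$ iid Bernoullis of mean $F_X(t_j)$, hence Hoeffding's inequality yields
\[
\pr{\bigl|F_\XX(t_j)-F_X(t_j)\bigr|>\eps}\,\le\, 2\exp(-2\eps^2 k).
\]
A union bound over the at most $1/\eps$ interior grid points then gives
\[
\pr{\XX\text{ is }3\eps\text{-far from normal}}\,\le\, \frac{2}{\eps}\exp(-2\eps^2 k)\,\le\, 2\eps^{-1}\exp\!\left(\frac{-\eps^2 k}{2}\right),
\]
as required.

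\textbf{Main obstacle.} There is no real obstacle: the proof is a textbook Glivenko--Cantelli style discretisation combined with a Bernstein/Hoeffding tail bound. The only thing to be mindful of is the bookkeeping of the three $\eps$'s on the left side: one from replacing the supremum by a maximum over the grid (Step~2), one from replacing $\Phi$ by $F_X$ at the grid points using the hypothesis $d(X,\Phi)\le\eps$, and one that is the tolerance on each Hoeffding event. Together these account for the $3\eps$ in the statement, while the $\eps^{-1}$ prefactor in the bound comes from the number of grid points.
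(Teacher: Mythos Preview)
Your proof is correct and follows essentially the same approach as the paper: an $\eps$-net on the $\Phi$-scale reduces the supremum to finitely many points, the hypothesis $d(X,\Phi)\le\eps$ is used to pass from $\Phi$ to $F_X$, and a concentration inequality plus union bound finishes. The only cosmetic difference is that you invoke Hoeffding (obtaining the sharper exponent $2\eps^2 k$, which you then discard) whereas the paper applies a Chernoff lower-tail bound to $Y$ and $k-Y$ to get $\exp(-\eps^2 k/2)$ directly.
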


\begin{proof} 
We first prove for a fixed $t\in \RR$ that 
\eq{fixedt}
\pr{d_{t}(\XX,\Phi)\, >\, 2\eps}\, \le\, 2\exp\left(\frac{-\eps^2 k}{2}\right)\, .
\eqe
We then deduce the lemma by a simple $\eps$-net argument.

Fix $t\in \RR$.  Let $p(t):= \pr{X\le t}$.  As $d(X,\Phi)\le \eps$,  we have $|p(t)-\Phi(t)|\le \eps$.

Let $Y:=|\{i:X_i\le t\}|$.  Note that $Y\sim \Bin(k,p(t))$ and that $\Ex{Y}=kp(t)$, so that $|\Ex{Y}-k\Phi(t)|\le \eps k$.  We now have that
\[
\pr{d_{t}(\XX,\Phi)\, >\, 2\eps}\, =\, \pr{|Y-k\Phi(t) |>2\eps k}\, \le\, \pr{|Y-\Ex{Y}|> \eps k}\, .
\]
By applying the Chernoff bound for the lower tail to both $Y$ and $k-Y\sim \Bin(k,1-p(t))$, we obtain that
\[
\pr{|Y-\Ex{Y}|> \eps k}\, \le\, 2\exp\left(\frac{-\eps^2 k}{2}\right)\, ,
\]
as claimed.

We now proceed to the $\eps$-net argument.  For each $j=1,\dots , \lfloor \eps^{-1}\rfloor$, let $t_j=\Phi^{-1}(j\eps)$.  We also extend the definition to $t_0=-\infty$ and $t_{\lfloor \eps^{-1}\rfloor+1}=\infty$.    In a slight abuse of notation we extend our definition of $d_{t}(\XX,\Phi)$ to include $d_{-\infty}(\XX,\Phi)=d_{\infty}(\XX,\Phi)=0$ for any real vector $\XX$.
	
Using the notation $\mathbb{P}_{i\in I}$ for the uniform probability measure on $I$, it now follows by an easy monotonicity argument that for $t\in (t_{j-1},t_j]$ we have
\begin{align*}
	\mathbb{P}_{i\in I}\big(X_i\leq t\big)\, -\, \Phi(t)\, &\le\,  \mathbb{P}_{i\in I}\big(X_i\leq t_j\big)\, -\, \Phi(t_{j-1})\\
		& \le\, d_{t_j}(\XX,\Phi)\, +\, \eps\, .
\end{align*}
By an essentially identical argument
\[
\Phi(t)\, -\, \Pr_{i\in I}\big(X_i\leq t\big)\, \le\, d_{t_{j-1}}(\XX,\Phi)\, +\, \eps\, .
\]
It follows that
\eq{epsnet}
d(\XX,\Phi)\, \le\, \max_{j} d_{t_j}(\XX,\Phi)\, +\, \eps\, .
\eqe
In other words, if we accept losing an additive term $\eps$, we only need to consider the distance $d_t(\XX,\Phi)$ at the finitely many points $t=t_j$ for $j=1,\dots , \lfloor \eps^{-1}\rfloor$.  

We may now complete the proof, as, if $\XX$ is $3\eps$-far from normal, then by~\eqr{epsnet}, we have that $d_{t}(\XX,\Phi)\, >\, 2\eps$ for some $t\in \{t_j: j=1,\dots, \lfloor \eps^{-1}\rfloor\}$.  By~\eqr{fixedt} and a union bound, this has probability at most
\[
\eps^{-1} \, \cdot \, 2\exp\left(\frac{-\eps^2 k}{2}\right)\, ,
\]
as required.
\end{proof}


The case where each $X_i$ is itself obtained as a weighted sum of iid Bernoulli random variables will be particularly relevant.  Let $(a_i)_{i=1}^\ell$ be a sequence of real numbers and let $X:=\sum_{i=1}^{\ell}a_i\xi_i$, where the random variables $\xi_i$ are iid with distribution Bernoulli $\textrm{Bern}(p)$.  Let $\tilde{X}:=(X-\mu_X)/\sigma_X$, be the renormalisation of $X$, using $\mu_X=p\sum_i a_i$ and $\sigma_X^2\, =\, p(1-p)\sum_{i}a_i^2$.  By the Berry-Esseen inequality~\cite{Berry,Esseen} (using, for example, Theorem 1 of~\cite{Berry}, with $\Lambda=\max\{|a_i|\}$ and $\sigma=\sigma_X$) we have that
\eq{Berry}
d(\tilde{X},\Phi)\, \le\, \frac{2\max\{|a_i|\}}{\sigma_X}
\eqe
In particular, as a corollary of the above lemma we have.

\begin{cor}\label{empcor}
Let $(a_i)_{i=1}^\ell$ be a sequence of real numbers, let $p\in (0,1)$ and set $\mu=p\sum_{i}a_i$ and $\sigma^2=p(1-p)\sum_{i}a_i^2$.   For $j=1,\dots, k$, let us define $X_j=\sum_{i} a_i\xi_{i,j}$ where the $\xi_{i,j}$ are iid $\textrm{Bern}(p)$ random variables.  And let $\tilde{X}_j=(X_j-\mu)/\sigma$ for each $j=1,\dots ,k$.  Let $\XX=(\tilde{X}_1,\dots ,\tilde{X}_k)$.  Then for every $\eps\ge 2\max\{|a_i|\}\sigma^{-1}$, we have
	$$\mathbb{P}\big(\XX\text{ is }3\eps\text{-far from normal}\big)\,\leq\, 2\eps^{-1}\,\exp\left(\frac{-\eps^2k}{2}\right).$$
\end{cor}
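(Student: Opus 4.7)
The plan is to observe that Corollary \ref{empcor} follows almost immediately by combining the Berry--Esseen bound \eqref{eq:Berry} with Lemma \ref{lem:indep}, and so there is essentially no obstacle to overcome beyond checking that the hypotheses match up correctly.

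First I would note that, for each fixed $j$, $\tilde{X}_j$ is precisely the normalisation of the weighted Bernoulli sum $X_j = \sum_i a_i \xi_{i,j}$, since $\mu = \Ex{X_j}$ and $\sigma^2 = \Var(X_j)$ by our choice of these parameters. Moreover, because the family $(\xi_{i,j})_{i,j}$ is jointly independent and, for each $j$, the variables $(\xi_{i,j})_i$ have the common distribution $\Bern(p)$, the normalised variables $\tilde{X}_1,\dots,\tilde{X}_k$ are iid copies of $\tilde{X}_1$.

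Next I would apply the Berry--Esseen inequality in the form \eqref{eq:Berry} to the single variable $\tilde{X}_1$. This yields
\[
d(\tilde{X}_1,\Phi)\, \le\, \frac{2\max\{|a_i|\}}{\sigma}\, \le\, \eps\, ,
\]
where the second inequality is exactly the assumption $\eps\ge 2\max\{|a_i|\}\sigma^{-1}$ of the corollary.

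Finally I would invoke Lemma \ref{lem:indep} with the random variable $X=\tilde{X}_1$ and the iid sample $\XX=(\tilde{X}_1,\dots,\tilde{X}_k)$, using the value of $\eps$ above. Since the hypothesis $d(X,\Phi)\le\eps$ of that lemma has just been verified, the lemma directly gives
\[
\pr{\XX\text{ is }3\eps\text{-far from normal}}\, \le\, 2\eps^{-1}\exp\!\left(\frac{-\eps^2 k}{2}\right),
\]
which is exactly the claimed bound. As remarked, there is no genuine difficulty here: the whole content of the corollary is to package the Berry--Esseen estimate so that Lemma \ref{lem:indep} can be applied to weighted Bernoulli sums, which is the precise form in which we shall need it when passing to synergies in later sections.
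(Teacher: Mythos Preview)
Your proof is correct and follows exactly the same approach as the paper: apply the Berry--Esseen bound~\eqr{Berry} to conclude $d(\tilde{X}_j,\Phi)\le\eps$, note that the $\tilde{X}_j$ are iid, and then invoke Lemma~\ref{lem:indep}. There is nothing to add.
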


\begin{proof} By~\eqr{Berry}, each random variable $\tilde{X}_j$ is $\eps$-close to normal, as $\eps\ge 2\max\{|a_i|\}\sigma^{-1}$.  They are also iid, and so the result follows from Lemma~\ref{lem:indep}.
\end{proof}

For example, if $|a_i|\le 1$ for all $i$, then putting $\eps=2\sigma^{-1}$, we would have
\[
\pr{\XX\text{ is }(6\sigma^{-1})\text{-far from normal}}\, \le\, \sigma\, \exp\left(\frac{-2k}{\sigma^2}\right)\, .
\]

As we have discussed, the entries in the vector of synergies $\S^{p}_{G}(u)$ are \emph{not} independent, and so we cannot directly deduce Proposition~\ref{prop:syngp} from Lemma~\ref{lem:indep}. Instead, we fix a subset $I$ of the non-neighbours of $u$, and for each $w\in I$ we consider the \emph{relative synergy} defined by
\eq{relative-synergy}
S_{G}^p(u,w|I)\, :=\, d_{G}(u,w)\, -\, p d_{G}(u)\, -\, p|N_{G}(w)\cap (V\setminus I)|\,  +\, p^2(n-|I|-1)\, .
\eqe
In some sense, this relative synergy contains ``less information'' than the original synergy $S_{G}^p(u,w)$, as we no longer ask how many neighbours $w$ has in $I$, instead replacing this value with its expected value, $p(|I|-1)$.  On the other hand, relative synergies are independent!

\begin{lem}
	Let $u\in G$, let $I\subseteq V(G)\setminus (N_G(u)\cup\{u\})$ and let $v,w\in I$. Then $S^p(u,v|I)$ and $S^p(u,w|I)$ are conditionally independent and identically distributed, given $N_G(u)$.
\end{lem}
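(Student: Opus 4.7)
The plan is to exhibit, conditional on $N_G(u)$, the relative synergy $S^p(u,w|I)$ as the value of a single deterministic function $h_{N_G(u)}$ evaluated on a random vector $\xi_w$ of edge-indicators that depends on $w$ only through a relabelling, with the vectors $\xi_v$ and $\xi_w$ for distinct $v,w\in I$ indexed by disjoint sets of potential edges. Given this, the lemma will follow immediately from the fact that the edges of $G\sim G(n,p)$ are independent Bernoulli$(p)$ random variables and from the symmetry of the construction in the label $w$.

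First I would expand each random term in the definition of $S^p(u,w|I)$ as an explicit sum of edge-indicators. Because $w\in I$ and $I\cap N_G(u)=\emptyset$, we have $w\notin N_G(u)$ and $N_G(u)\subseteq V\setminus I$, so
\[
d_G(u,w)\;=\;\sum_{x\in N_G(u)}\mathbf{1}[wx\in E(G)] \quad\text{and}\quad |N_G(w)\cap(V\setminus I)|\;=\;\sum_{x\in V\setminus(I\cup\{u,w\})}\mathbf{1}[wx\in E(G)],
\]
where in the second identity the term at $x=u$ is zero because the edge $uw$ is deterministically absent once $N_G(u)$ is fixed. The term $d_{G[V\setminus I]}(w)$ (read as the count of edges from $w$ into $V\setminus I$) is absorbed in the same way, and the additive constant $p^2(n-|I|-1)$ does not depend on $G$. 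Combining, there exists a deterministic function $h = h_{N_G(u)}$, depending on $N_G(u)$ but otherwise the same for every $w\in I$, such that
\[
S^p(u,w|I)\;=\;h\bigl(\xi_w\bigr), \qquad \xi_w\;:=\;\bigl(\mathbf{1}[wx\in E(G)]\bigr)_{x\in V\setminus(I\cup\{u,w\})}.
\]

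For distinct $v,w\in I$ the index sets of $\xi_v$ and $\xi_w$ refer to pairwise disjoint potential edges, since one endpoint of each edge is the distinct vertex $v$ or $w$ and the other endpoint lies in $V\setminus(I\cup\{u\})$, which is disjoint from $\{v,w\}$. Moreover, none of these edges is incident to $u$, so their joint distribution is unaffected by conditioning on $N_G(u)$. Hence $\xi_v$ and $\xi_w$ are conditionally iid given $N_G(u)$, and applying the single function $h$ to each gives the claim. The only point requiring care is the bookkeeping around the edge $uw$, the unique edge that is both incident to $w$ and revealed by $N_G(u)$; it can be cleanly excluded from $\xi_w$ thanks to the hypotheses $w\in I$ and $I\cap(N_G(u)\cup\{u\})=\emptyset$.
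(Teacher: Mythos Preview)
Your proof is correct and follows essentially the same approach as the paper: both argue that, once $N_G(u)$ is fixed, $S^p(u,v|I)$ and $S^p(u,w|I)$ are functions of disjoint sets of edge-indicators (none incident to $u$), hence conditionally independent and, by symmetry in the label, identically distributed. Your version is simply a more explicit unpacking of the paper's one-sentence observation that ``they depend on disjoint sets of edges (the edge $vw$ is not considered by the relative synergy).''
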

\begin{proof}
    Each edge of $G$ is in it independently from the other edges, for any $x,y\in\mathbb{R}$. Therefore the events $\big[S^p(u,w|I)\leq x\big]$ and $\big[S^p(u,v|I)\leq y\big]$ are independent, since they depend on disjoint sets of edges (the edge $vw$ is not considered by the relative synergy).  It is clear from the definition that they are identically distributed.
\end{proof}
	
	\subsection{Sampling}\label{subsec : sampling}
	
	Given a vector $\x:=(x_i:i\in I)$ and a subset $J\subseteq I$, we write $\x_J:=(x_j:j\in J)$ for the vector restricted to this subset of the indices.  The following lemma states that if a vector is far from normal then the sample of it is also likely to be far from normal\footnote{There is nothing special about the normal distribution, but we state the result for normal as this is the case we require.  One could also prove stronger bounds of course.}.
	
	\begin{lem}\label{lem:samp1}
		Let $\eps>0$, let $1\le k\le K$ and suppose that $\eps^2 k\ge 2$.  Let $\x:=(x_1,\dots x_K)$ be a vector which is $2\eps$-far from normal, and let $I\subseteq [K]$ be a random subset chosen uniformly from $k$-element subsets of $[K]$.  Then
		\[
		\Pr\big(\x_{I}\text{ is } \varepsilon\text{-far from normal}\big)\, \ge\, \frac{1}{2}\, .
		\]
	\end{lem}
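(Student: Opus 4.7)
The plan is to reduce the statement to a single-point concentration argument via a hypergeometric variance bound. Since $\x$ is $2\eps$-far from normal, by definition $\sup_{t\in\RR}|F_{\x}(t)-\Phi(t)|>2\eps$, so (using right-continuity of $F_{\x}$ and continuity of $\Phi$) we can fix some $t^{*}\in\RR$ with $|F_{\x}(t^{*})-\Phi(t^{*})|>2\eps$. The idea is then to show that with probability at least $1/2$ the sampled distribution function $F_{\x_{I}}$ at the single point $t^{*}$ stays within $\eps$ of $F_{\x}(t^{*})$, so that the triangle inequality immediately gives $|F_{\x_{I}}(t^{*})-\Phi(t^{*})|>\eps$, and hence $\x_{I}$ is $\eps$-far from normal.

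To carry this out, fix $t^{*}$ as above and set $p^{*}:=F_{\x}(t^{*})$ and $Y:=|\{i\in I:x_{i}\le t^{*}\}|$. Since $I$ is a uniformly random $k$-subset of $[K]$ and the number of indices $i\in[K]$ with $x_{i}\le t^{*}$ equals $p^{*}K$, the random variable $Y$ has a hypergeometric distribution with parameters $(K,p^{*}K,k)$. In particular $\Ex{Y}=kp^{*}$, so $\Ex{F_{\x_{I}}(t^{*})}=p^{*}=F_{\x}(t^{*})$, and the standard hypergeometric variance formula gives
\[
\Var(F_{\x_{I}}(t^{*}))\,=\,\frac{1}{k}\cdot p^{*}(1-p^{*})\cdot\frac{K-k}{K-1}\,\le\,\frac{1}{4k}\,.
\]

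Applying Chebyshev's inequality and the hypothesis $\eps^{2}k\ge 2$,
\[
\Pr\!\left(|F_{\x_{I}}(t^{*})-F_{\x}(t^{*})|>\eps\right)\,\le\,\frac{1}{4\eps^{2}k}\,\le\,\frac{1}{8}\,<\,\frac{1}{2}\,.
\]
On the complementary event we have $|F_{\x_{I}}(t^{*})-F_{\x}(t^{*})|\le\eps$, and combining this with $|F_{\x}(t^{*})-\Phi(t^{*})|>2\eps$ via the triangle inequality yields
\[
d_{t^{*}}(\x_{I},\Phi)\,=\,|F_{\x_{I}}(t^{*})-\Phi(t^{*})|\,>\,\eps\,,
\]
which means $\x_{I}$ is $\eps$-far from normal. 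Hence this occurs with probability at least $1-\tfrac{1}{8}\ge\tfrac{1}{2}$, as required.

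There is no real obstacle here: the main point to get right is that the sample count at a fixed threshold is hypergeometric (not binomial), so one must use the hypergeometric variance bound rather than a binomial one; the factor $(K-k)/(K-1)\le 1$ is what makes this painless, and the extra slack in $\eps^{2}k\ge 2$ means Chebyshev suffices (a Chernoff-type bound for hypergeometric would give the same conclusion with room to spare if a stronger bound were ever needed).
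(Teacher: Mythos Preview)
Your proof is correct and follows essentially the same route as the paper's: fix a threshold $t^*$ where $|F_{\x}(t^*)-\Phi(t^*)|>2\eps$, observe that the sample count at this threshold is hypergeometric, and apply Chebyshev. The only difference is that you use the sharper variance bound $p^*(1-p^*)\le 1/4$ (yielding failure probability at most $1/8$), whereas the paper uses the cruder $\sigma^2\le\mu$ to get the bound $1/(\eps^2 k)\le 1/2$; this is a cosmetic improvement, not a different approach.
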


	\begin{proof}
	Reordering, if necessary, we may assume that $\x=(x_1,\dots, x_{K})$ is non-decreasing.  As $d(\x,\Phi)\ge 2\eps$, it follows that for some $t\in \RR$ we have $d_t(\x,\Phi)\ge 2\eps$.   Let $\ell$ be maximal such that $x_{\ell}\le t$.  The condition $d_t(\x,\Phi)\ge 2\eps$ gives us that 
	\eq{ellfar}
	\big|\ell -\Phi(t)K\big|\, >\, 2\eps K\, .
	\eqe
	
We now consider the vector $\x_{I}$.  Let $Y$ be the random variable $|I \cap[\ell]|$.  Note that $d_t(\x_{I},\Phi)=|Y-\Phi(t)k|/k$, and so the vector $\x_{I}$ is certainly $\eps$-far from normal in the case
\eq{tohyp}
|Y-\Phi(t)k|\, \ge\, \eps k\, .
\eqe 
We note that $Y$ is distributed as a hypergeometric random variable, $Y\sim\textrm{Hyp}(K,\ell,k)$, which has expected value $\mu=\ell k/K$, and so, by~\eqr{ellfar}, we have $|\mu-\Phi(t)k|\ge 2\eps k$.  Now, the probability $\x_{I}$ is \emph{not} $\eps$-far from normal is at most
\[
\pr{|Y-\Phi(t)k|\, <\, \eps k}\, \le\, \pr{|Y-\mu|\, >\, \eps k}\, .
\]
For hypergeometric random variables, the variance satisfies $\sigma^2\le \mu \le \ell k/K$.  And so, it follows from Chebychev's inequality that this probability is at most
\[
\frac{\ell k}{\eps^2 k^2 K}\, \le\, \frac{1}{\eps^2 k}\, \le\, \frac{1}{2}\, .
\]
\end{proof}

In particular, this may be applied to realisations of random vectors $\XX\in \RR^K$.  Doing so yields the following corollary.

\begin{cor}\label{cor:sample}
Let $\eps>0$, let $1\le k\le K$ and suppose that $\eps^2 k\ge 2$.  Let $\XX\in \RR^K$ be a random vector, and let $I\subseteq [K]$ be a random subset chosen uniformly from $k$-element subsets of $[K]$. Then
\[
\mathbb{P}(\XX\text{ is }2\eps\text{-far from normal})\,\leq\, 2 \,\mathbb{P}(\XX_I\text{ is }\eps\text{-far from normal})\, .
\]
\end{cor}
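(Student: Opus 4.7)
The plan is to derive Corollary~\ref{cor:sample} directly from Lemma~\ref{lem:samp1} by a conditioning argument, treating the random vector $\XX$ and the random sample $I$ as independent (which is the natural interpretation of the statement).

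First I would condition on the value of $\XX$. For any deterministic realisation $\x$ of $\XX$ that is $2\eps$-far from normal, Lemma~\ref{lem:samp1} applies (the hypotheses $\eps>0$, $1\le k\le K$, $\eps^2k\ge 2$ transfer directly) and yields
\[
\pr{\x_I \text{ is } \eps\text{-far from normal}}\, \ge\, \frac{1}{2}\, .
\]
In particular, writing $A$ for the event that $\XX$ is $2\eps$-far from normal and $B$ for the event that $\XX_I$ is $\eps$-far from normal, we have $\pr{B \mid \XX = \x}\ge 1/2$ for every $\x\in A$.

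Next I would integrate this conditional bound against the distribution of $\XX$. Using the tower property,
\[
\pr{B}\, \ge\, \pr{A\cap B}\, =\, \Ex{\mathbb{1}[A]\cdot \pr{B\mid \XX}}\, \ge\, \frac{1}{2}\, \Ex{\mathbb{1}[A]}\, =\, \frac{1}{2}\,\pr{A}\, ,
\]
and rearranging gives $\pr{A}\le 2\pr{B}$, which is exactly the desired inequality.

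I do not expect any genuine obstacle here: the only subtlety is the (implicit) independence of the sample $I$ from the random vector $\XX$, which is needed so that the conditional probability $\pr{B\mid \XX=\x}$ coincides with the probability considered in Lemma~\ref{lem:samp1}. Everything else is a one-line integration of the pointwise bound.
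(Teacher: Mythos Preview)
Your argument is correct and is exactly what the paper intends: it simply applies Lemma~\ref{lem:samp1} to each realisation of $\XX$ and integrates, noting (as you do) the implicit independence of $I$ from $\XX$. The paper itself does not spell out the proof beyond this observation, so your version is a faithful expansion of the intended reasoning.
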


	\subsection{Proof of Proposition~\ref{prop:syngp}}\label{subsec : synergies}
	 Recall that $G\sim G(n,p)$.  We omit $G$ from degree and synergy notations for the remainder of the section.  We use the following notation 
  \eq{degree-deviation}
  D(u)\, :=\, d(u)-\mathbb{E}[d(u)]\, =\, d(u)-p(n-1)
  \eqe
  and 
  \eq{codegree-deviation}
  D(u,w)\,:=\, d(u,w)-\mathbb{E}[d(u,w)]\, =\, d(u,w)-p^2(n-2)\, .
  \eqe

Before giving the proof, let us explain why the sampling trick is really necessary. Given a vertex $u$, its neighbourhood $N(u)$ and a vertex $w\in V\setminus (N(u)\cup\{u\})$, we can write \begin{equation}\label{syneq1prop}
			S^p(u,w)\, +pd(u)\, -\, p^2(n-2)\, =\, d(u,w)\, -\, pd(w)\, =\, \sum_{v\in V\setminus\{u,w\}}a_v\xi_{wv}
		\end{equation}
		where each $\xi_v$ is an iid copy of $\Bern(p)$ and $a_v=1_{v\in N(u)}-p$ for all $v\in V(G)\setminus \{u,w\}$. We would like to apply Corollary \ref{empcor}.  The problem is that, if we set $X_w$ equals to the sum in~\eqref{syneq1prop}, then the sums do not use disjoint sets of iid Bernoulli random variables as we vary over $w$.  So the sampling idea will be vital.

	\begin{proof}[Proof of Proposition \ref{prop:syngp}]
		Fix a vertex $u\in G$ and reveal the neighbourhood $N(u)$.  Recall $\eps=n^{-\beta}$, for some $\beta\in (0,1/4)$.

Let $k=n^{1/2}$. Let $I$ be a uniformly chosen $k$-element subset of $(N(u)\cup\{u\})^c$.  By Corollary~\ref{cor:sample} it suffices to prove that, except with probability at most $\exp(-n^{1/2-2\beta+o(1)})$, we have that
\eq{sampleS}
\big(\tilde{S}^p(u,w)\, :\, w\in I\big)
\eqe
is $(\eps/2)$-close to normal.
  
Before tackling this problem directly, we recall and introduce some notation, and prove some auxiliary results about these quantities.  

For each $w\in I$, we recall the relative synergy
		$$S^p(u,w|I)\, :=\, d(u,w)\, -\, pd(u) \, -\, p\big|N(w)\cap (V\setminus I)\big|\,+\, p^2(n-|I|-1)\, .$$
By a calculation similar to~\eqref{syneq1prop} we have
  $$S^p(u,w|I)\,+\, pd(u) \, - \, p^2(n-|I|-1)\,=\, \sum_{v\in V\setminus(I\cup\{u\})}a_v\xi_{wv} \, \,=:\, X_w\, ,$$
  where $a_{v}=1_{v\in N(u)}-p$.  Furthermore, as $\xi_{wv}$ is exactly the indicator function of the edge $wv$, the sequence $(\xi_{wv}:w\in I, v\in V\setminus(I\cup\{u\}))$ is a family of iid $\Bern(p)$ random variables.
  
  As $I$ is simply a set of a fixed size $|I|=k$, and all vertices of $I$ are equivalent, the choice of $w$ and $I$ does not affect the expected value or variance of the random variable $X_w$.  Let us define 
  \[
  \mu_u\, :=\, \Ex{X_w \mid N(u)}\, =\, p\, \sum_{i}a_i\, \, =\, pd(u)\, -\, p^2(n-k-1)
  \]
  and
  \begin{align}
  \sigma^2_u\, :=\, \Var(X_w \mid N(u))\, &=\, p(1-p)\, \sum_{v}a_v^2\nonumber \\ &\hspace{-1mm}=\, p(1-p)^3 d(u)\, +\, p^3(1-p)(n-k-d(u)-1)\, .\hspace{8mm}\label{eq:sigmau}
  \end{align}
  For each $w\in I$ define the renormalised random variable
  \[
  \tilde{X}_w\, :=\, \frac{X_w\, -\, \mu_u}{\sigma_u}\, .
  \]

  We shall now observe that, except with probability at most $\exp(-n^{1/2-2\beta+o(1)})$, each of the following statements hold:
  \begin{enumerate}
      \item[(i)] $|\sigma(p)-\sigma_u|=O(\eps n^{1/4})$,
      \item[(ii)] the vector $\XX=(\tilde{X}_w:w\in I)$ is $(\eps/4)$-close to normal, and
      \item[(iii)] $|S^p(u,w)-S^p(u,w|I)|\, \le \, \eps\sigma(p)/8$ for all $x\in I$.
  \end{enumerate}

We first prove that (i) may only fail if $|D(u)|\ge \eps n^{3/4}$.  This is sufficient, as a standard application of Chernoff's inequality shows that $\pr{|D(u)|\ge \eps n^{3/4}}\le \exp(-n^{1/2-2\beta+o(1)})$.

Assume now that $|D(u)|\le \eps n^{3/4}$.  Let $q:=1-p$, and recall that $k=n^{1/2}\le \eps n^{3/4}$.  By~\eqr{sigmap} and~\eqr{sigmau}, and writing $d(u)=p(n-1)+D(u)$, we have that
 \begin{align*}
 \sigma(p)^2\, -\, \sigma^2_u\, & =\, pq\Big[pq(n-2)\, -\, q^2 d(u)\, -\, p^2(n-k-d(u)-1)\Big]\phantom{\bigg|}\\
 & =\, pq\Big[pq(n-2)\, -\, q^2 (p(n-1)+D(u))\, -\, p^2(qn-k-D(u)-q)\Big]\phantom{\bigg|}\\
 & =\, pq\Big[p^2(k+D(u))\, -\, q^2D(u)\Big]\, +\, O(1)\phantom{\bigg|}\\
 & =\, O(\eps n^{3/4})\, \phantom{\big|} .
 \end{align*}  
 We now see that (i) follows from the difference of squares formula $(\sigma(p)-\sigma_u)(\sigma(p)+\sigma_u)=\sigma(p)^2-\sigma_u^2$, as $\sigma(p)=\Omega(n^{1/2})$.

For statement (ii), simply apply Corollary~\ref{empcor} (with $\eps/12$), to find that the failure probability is at most $24\eps^{-1}\exp(-\eps^2k/288)=\exp(-n^{1/2-2\beta+o(1)})$.

Finally, for (iii), comparing the definitions, we see that
\[
S^p(u,w)-S^p(u,w|I)\, =\, p^2 (k-1)\, -\, p|N(w)\cap I|\, .
\]
As $p(k-1)$ is the mean of the random variable $|N(w)\cap I|\sim  \Bin(k-1,p)$ it is easily verified using Chernoff's inequality that 
\[
\pr{|S^p(u,w)-S^p(u,w|I)|\, >\, \frac{\eps\sigma(p)}{8}}\, \le\, \exp(-\Omega(\eps^2 n/k))\, =\, \exp(-n^{1/2-2\beta+o(1)})\, .
\]
Statement (iii) now follows by a union bound over the $k=\exp(n^{o(1)})$ vertices $w\in I$.	

It now clear suffices to prove that, if (i), (ii) and (iii) holds then the vector~\eqr{sampleS} is $(\eps/2)$-close to normal.  From now we assume (i), (ii) and (iii) hold.

\textbf{Claim} If $|\tilde{X}_w|\le n^{1/8}$ then
\[
\big| \tilde{S}^p(u,w)\, -\, \tilde{X}_w\big|\, \le\, \frac{\eps}{4}
\]
provided $n$ is sufficiently large.

\textbf{Proof of Claim:} We begin by noting that $X_w-\mu_u=S^{p}(u,w|I)$, so that $\tilde{X}_w=S^{p}(u,w|I)/\sigma_u$.  By (i), we have
\begin{align*}
\tilde{S}^p(u,w)\, -\, \tilde{X}_w\, &=\, \frac{S^p(u,w)}{\sigma(p)}\, -\, \frac{S^p(u,w|I)}{\sigma_u}\\
& =\, \frac{S^p(u,w)-S^p(u,w|I)}{\sigma(p)}\, \pm\, O(\eps n^{-1/4} \tilde{X}_w)\, .
\end{align*}
Using the assumption on $\tilde{X}_w$, and (iii), we obtain that $|\tilde{S}^p(u,w)\, -\, \tilde{X}_w|$ is at most $\eps/8+O(\eps n^{-1/8})$, which is at most $\eps/4$, for all sufficiently large $n$, as required.

We are now finally ready to observe that~\eqr{sampleS} is $(\eps/2)$-close to normal.  In other words, we prove that
\[
\forall a\in \RR \qquad \Big|\, |\{w\in I:\tilde{S}^p(u,w)\le a\}|\, -\, \Phi(a)k\, \Big|\, \le\, \frac{\eps k}{2}\, .
\]
For values $a\in \RR$ with $|a|\ge n^{1/8}-\eps$ it suffices to show that at most $\eps k/2$ vertices $w\in I$ have $|\tilde{S}^p(u,w)|\ge n^{1/8}-\eps$.  It follows from (ii) that, for all sufficiently large $n$, there are at most $\eps k/2$ vertices $w\in I$ such that $|\tilde{X}_w|\ge n^{1/8}-2\eps$.  For all the other vertices $w\in I$ we have $|\tilde{S}^p(u,w)|\le n^{1/8}-\eps$ by the claim.

For $a\in \RR$ with $|a|\le n^{1/8}-\eps$, we use (ii) and the claim to obtain that
\[
\frac{|\{w\in I:\tilde{S}^p(u,w)\le a\}|}{k}\, \le\, \frac{|\{w\in I:\tilde{X}_w\le a+\eps/4\}|}{k}\, \le\, \Phi\left(a+\frac{\eps}{4}\right)+\frac{\eps}{4}\, \le\, \Phi(a)+\frac{\eps}{2}\, ,
\]
and
\[
\frac{|\{w\in I:\tilde{S}^p(u,w)\le a\}|}{k}\, \ge\, \frac{|\{w\in I:\tilde{X}_w\ge a-\eps/4\}|}{k}\, \ge\, \Phi\left(a-\frac{\eps}{4}\right)-\frac{\eps}{4}\, \ge\, \Phi(a)-\frac{\eps}{2}\, ,
\]
as required.
\end{proof}

		
		
		
		

\section{Relating Codegrees and Synergies}\label{sec:CoSy}

As we explained in the overview, our plan is to consider an event which biases our selection towards edges with smaller synergy.   The results of this section relate synergies to codegrees, which is vital to our approach.  We also prove some properties of the graph $F_-$.


More concretely, we shall use $\alpha\in(0,1)$ as a parameter to be defined later. Our event will involve taking $(1+\alpha)/2$ proportion of the remaining $m_1=m-m_0$ edges from $F_-$ rather than its complement $F_+:= K_n\setminus (G^0\cup F_-)$.  One contribution to the final number of triangles are those with two edges in $G^0$ and one in $G^1$.  Conditioned on this event this conditional expectation becomes
$$\sum_{uv\in F_-}d_{G^0}(u,v)\frac{(1+\alpha)m_1}{N-m_0}+\sum_{uv\in F_+}d_{G^0}(u,v)\frac{(1-\alpha)m_1}{N-m_0}\, .$$
This clearly motivates our interest in the expression
\[
\sum_{uv\in F_-}d_{G^0}(u,v)\, -\,\sum_{uv\in F_+}d_{G^0}(u,v)\, .
\]


We now state the required result on this quantity.  Let us fix for the remainder of the article $\varepsilon:=n^{-1/5}$ and recall that $p_0=m_0/N$ and 
$\sigma(p_0)^2=\Var(S_{G}^{p_0}(u,v))$, where $G\sim G(n,p_0)$, which we abbreviate to $\sigma^2$ for the remainder of the section.


\begin{prop}\label{prop: main CoSy}
There exists $c>0$ such that the following holds.  If $G^0$ is such that $|d_{G^0}(u)-p_0n|\leq n^{1/2}\log n$ and $\mathcal{S}_{G^0}(u)$ is $\varepsilon$-close to normal, for all vertices $u$, then 
    $$\sum_{uv\in F_-}d_{G^0}(u,v)-\sum_{uv\in F_+}d_{G^0}(u,v)\leq -cn^{5/2}.$$
\end{prop}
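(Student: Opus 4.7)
My plan is to expand the codegrees via the definition of synergy. Writing $d_{G_0}(u,v) = S_{G_0}(u,v) + p_0(d_{G_0}(u)+d_{G_0}(v)) - p_0^2(n-2)$, where $p_0 = e(G_0)/N$, and substituting, one obtains the decomposition
\[
\sum_{uv\in F_-} d_{G_0}(u,v)\, -\, \sum_{uv\in F_+} d_{G_0}(u,v)\, =\, \Delta\, +\, p_0\, D,
\]
where $\Delta := \sum_{F_-} S_{G_0}(u,v) - \sum_{F_+} S_{G_0}(u,v)$ and $D := \sum_u d_{G_0}(u)(f_-(u) - f_+(u))$ with $f_\pm(u) := |\{v : uv \in F_\pm\}|$; the constant $p_0^2(n-2)(|F_-|-|F_+|)$ cancels because $|F_-|=|F_+|$. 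I would show $\Delta \le -c_0 n^{5/2}$ and $|p_0 D| = o(n^{5/2})$, from which the claim follows.

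\textbf{Bound on $\Delta$.} Let $s^*$ be the global median synergy over non-edges of $G_0$, and set $M := |F_-|+|F_+| = \Theta(n^2)$. Since $F_\pm$ are exactly the lower and upper halves of the sorted synergies, a direct telescoping yields the identity $-\Delta = \sum_{uv} |S_{G_0}(u,v) - s^*|$, hence by the reverse triangle inequality $-\Delta \ge \sum_{uv} |S_{G_0}(u,v)| - M|s^*|$. The global empirical CDF of normalised synergies is a weighted average (by $|T(u)|$) of the per-vertex CDFs $F_{\mathcal S_{G_0}(u)}$, so by hypothesis it is itself $\varepsilon$-close to $\Phi$; this forces $|s^*/\sigma_{p_0}| = O(\varepsilon)$, giving $M|s^*| = O(\varepsilon n^{5/2}) = o(n^{5/2})$. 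For the lower bound on $\sum_{uv}|S_{G_0}(u,v)|$, fix a constant $C$; then $\varepsilon$-closeness implies
\[
\sum_{v\in T(u)} |\tilde S_{G_0}(u,v)|\, \ge\, \sum_{v\in T(u)} \big(|\tilde S_{G_0}(u,v)|\wedge C\big)\, \ge\, |T(u)|\cdot\big(\mathbb{E}[|Z|\wedge C] - 4\varepsilon C\big).
\]
Summing over $u$ (each pair is counted twice, appearing in both $T(u)$ and $T(v)$) gives $\sum_{uv}|S_{G_0}(u,v)| \ge \sigma_{p_0} M\big(\tfrac12 \mathbb{E}[|Z|\wedge C] - o(1)\big) = \Omega(n^{5/2})$, since $\sigma_{p_0} = \Theta(\sqrt n)$.

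\textbf{Bound on $D$.} Since $\sum_u(f_-(u) - f_+(u)) = 2(|F_-|-|F_+|) = 0$, I may centre the degrees: $D = \sum_u (d_{G_0}(u) - p_0 n)(f_-(u) - f_+(u))$. The $\varepsilon$-closeness of $\mathcal S_{G_0}(u)$ to $\Phi$, combined with $|s^*/\sigma_{p_0}| = O(\varepsilon)$ from the previous step, gives $f_\pm(u) = |T(u)|/2 \pm O(\varepsilon |T(u)|)$, and hence $\sum_u|f_-(u) - f_+(u)| = O(\varepsilon n^2)$. With the quasi-regularity bound $|d_{G_0}(u) - p_0 n| \le n^{1/2}\log n$, we conclude $|D| \le n^{1/2}\log n \cdot O(\varepsilon n^2) = O(\varepsilon n^{5/2}\log n) = o(n^{5/2})$ since $\varepsilon = n^{-1/5}$. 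Combining the two bounds yields the required $\sum_{F_-} d_{G_0}(u,v) - \sum_{F_+} d_{G_0}(u,v) \le -c n^{5/2}$.

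\textbf{Main obstacle.} The tightest step is the degree term: the trivial bound $|D| \le \max_u d_{G_0}(u) \cdot \sum_u |f_-(u) - f_+(u)| = O(n \cdot \varepsilon n^2) = O(\varepsilon n^3)$ overshoots the target, and one really needs the cancellation $|F_-|=|F_+|$ to replace $d_{G_0}(u)$ by its centred version before invoking quasi-regularity. A second delicate point in the synergy term is the truncation: working directly with $|\tilde S|$ makes $\int |F_{\mathcal S(u)} - \Phi|\,dt$ divergent, but truncating at a fixed constant $C$ converts the $\varepsilon$-Kolmogorov error into a harmless $O(\varepsilon C)$ per-vertex error, and the discarded tail is non-negative.
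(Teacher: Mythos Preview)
Your proof is correct and follows essentially the same route as the paper: the same decomposition of the codegree difference into a synergy part $\Delta$ and a degree-weighted part $D$, the same centering trick for $D$ (using $|F_-|=|F_+|$ to replace $d_{G_0}(u)$ by $d_{G_0}(u)-p_0n$, then invoking the degree bound and the $O(\varepsilon n)$ bound on $|f_-(u)-f_+(u)|$), and the same use of per-vertex $\varepsilon$-closeness to $\Phi$ to lower-bound $-\Delta$ by $\Omega(\sigma_{p_0}(N-m_0))$.

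The only cosmetic difference is in the packaging of the synergy bound. The paper introduces the auxiliary set $F_{+,0}=\{uw:S_{G_0}(u,w)\ge 0\}$, shows $\sum_{F_+}S\approx\sum_{F_{+,0}}S$ up to $O(\varepsilon^2(N-m_0)\sigma)$, and then lower-bounds $\sum_{F_{+,0}}S$ via the layer-cake integral; you instead use the clean identity $-\Delta=\sum_{uv}|S_{G_0}(u,v)-s^*|$ (the median minimises the $L^1$ deviation) and then truncate. Both give the same asymptotic constant $\mathbb{E}|Z|=\sqrt{2/\pi}$ in front of $\sigma_{p_0}(N-m_0)$. Your route is slightly more compact; the paper's is slightly more explicit about where the median $s^*$ sits. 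One harmless arithmetic slip: in your double-counting step the factors of $2$ from ``each pair counted twice'' and from $\sum_u|T(u)|=2M$ cancel, so the lower bound is $\sigma_{p_0}M\,\mathbb{E}[|Z|\wedge C]$ without the extra $\tfrac12$; this does not affect the conclusion.
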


Assume, for the rest of this section, that for every $u\in V(G^0)$ we have $\mathcal{S}_{G^0}(u)$ is $\varepsilon$-close to normal. The next lemma relates this quantity to an equivalent expression using codegrees, up to an error term.

\begin{lem}\label{d-to-syn}
	Let $D=max_{u}\{|d_{G^0}(u)-p_0n|:u\in V(G^0)\}$. Then
	\begin{equation*}
		\begin{split}
			\sum_{uv\in F_-}d_{G^0}(u,v)-\sum_{uv\in F_+}d_{G^0}(u,v)&\, =\, \sum_{uv\in F_-}S_{G^0}(u,v)-\sum_{uv\in F_+}S_{G^0}(u,v)\pm 4p_0 \eps Dn^2.\\
		\end{split}
	\end{equation*}
\end{lem}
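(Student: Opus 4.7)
The plan is to substitute the definition of synergy into the codegree sums and track the correction terms, exploiting symmetry and the global threshold defining $F_-$ to obtain cancellations. From $S_{G_0}(u,v) = d_{G_0}(u,v) - p_0 d_{G_0}(u) - p_0 d_{G_0}(v) + p_0^2(n-2)$, we have
\[
d_{G_0}(u,v) - S_{G_0}(u,v) \,=\, p_0\bigl(d_{G_0}(u) + d_{G_0}(v)\bigr) - p_0^2(n-2).
\]
Summing this identity over $uv \in F_-$ and subtracting the analogous sum over $F_+$, the constant $p_0^2(n-2)$ cancels because $|F_-| = |F_+| = (N-m_0)/2$, while the degree sums reorganise, by swapping the order of summation, into $p_0\sum_u d_{G_0}(u)\bigl(d_{F_-}(u) - d_{F_+}(u)\bigr)$.

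A clean cancellation now appears: if we replace each $d_{G_0}(u)$ by the constant $p_0 n$, the resulting sum vanishes identically, because $\sum_u (d_{F_-}(u) - d_{F_+}(u)) = 2(e(F_-) - e(F_+)) = 0$. So I write $d_{G_0}(u) = p_0 n + (d_{G_0}(u) - p_0 n)$; the first piece contributes zero, and what remains is bounded in absolute value by
\[
p_0 D \sum_u |d_{F_-}(u) - d_{F_+}(u)|.
\]

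The central estimate is to show $|d_{F_-}(u) - d_{F_+}(u)| \le 4\eps n$ uniformly in $u$, which is where the assumption that each $\mathcal{S}_{G_0}(u)$ is $\eps$-close to normal is used. Let $t_0$ be the threshold, on the normalised scale, such that a non-edge $uw$ lies in $F_-$ precisely when $\tilde S_{G_0}(u,w) \le t_0$ (ties broken arbitrarily). The empirical cdf of the whole collection of normalised synergies over non-edges is the weighted average of the per-vertex cdfs $F_{\mathcal{S}_{G_0}(u)}$ with weights $n-1-d_{G_0}(u)$, so, since each per-vertex cdf is within $\eps$ of $\Phi$, so is this average; as its value at $t_0$ equals $\tfrac{1}{2}$, this forces $|\Phi(t_0) - \tfrac{1}{2}| \le \eps$. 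Applying the per-vertex $\eps$-closeness again gives $\bigl|d_{F_-}(u)/(n-1-d_{G_0}(u)) - \tfrac{1}{2}\bigr| \le 2\eps$, whence $|d_{F_-}(u) - d_{F_+}(u)| \le 4\eps(n-1-d_{G_0}(u)) \le 4\eps n$. Substituting yields the claimed error $4 p_0 \eps D n^2$.

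The algebra itself is routine; the one step requiring any care is the short cdf argument locating the global threshold $t_0$ near the median of the standard normal, after which the uniform $\eps$-closeness of the vectors $\mathcal{S}_{G_0}(u)$ does all the work. Minor discretisation issues (ties among synergy values, or $F_-$ containing exactly $(N-m_0)/2$ non-edges) are harmlessly absorbed into the $\eps$ buffer and do not affect the final bound.
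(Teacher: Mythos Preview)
Your proof is correct and follows essentially the same approach as the paper: you rewrite the difference via the definition of synergy, reorganise into $p_0\sum_u d_{G_0}(u)(d_{F_-}(u)-d_{F_+}(u))$, subtract the constant $p_0 n$ (which cancels since $e(F_-)=e(F_+)$), and bound the residue using $|d_{F_-}(u)-d_{F_+}(u)|\le 4\eps n$. The paper separates this last degree-regularity bound into its own lemma (Lemma~\ref{lem:1-(2,1)}), proving it by locating the median $\tilde\mu$ via an explicit choice of $\lambda$ with $\Phi(\lambda)=1/2+\eps$, whereas you inline it with the (arguably cleaner) weighted-average-of-cdfs argument to pin down $\Phi(t_0)$; the two arguments are equivalent in content and yield the same constants.
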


We first require a lemma about the degrees of vertices in $F_-$.

\begin{lem}\label{lem:1-(2,1)} 
For every vertex $u$ we have
$$d_{F_{-}}(u)\, =\, \left(\frac{1}{2}\pm 2\eps\right)(n-d_{G^0}(u)-1)\,.$$
\end{lem}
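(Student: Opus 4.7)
The plan is to exploit the assumption that the synergy sequence $\mathcal{S}_{G_0}(u)$ is $\varepsilon$-close to normal, by rewriting $d_{F_-}(u)$ as a count of normalized synergies lying below a global threshold, and then pinning down that threshold using a summation over all vertices.

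First I would set $T_u := n - d_{G_0}(u) - 1$, the number of non-neighbours of $u$, and let $\tau$ be the synergy value at which the ordering $f_1,\ldots,f_{N-m_0}$ crosses from $F_-$ into $F_+$; write $\tilde\tau := \tau/\sigma_{p_0}$ for the normalized threshold. Up to tie-breaking at $\tilde\tau$, we have
\[
\left|\{w : uw \notin E(G_0),\ \tilde S_{G_0}(u,w) < \tilde\tau\}\right|\, \le\, d_{F_-}(u)\, \le\, \left|\{w : uw \notin E(G_0),\ \tilde S_{G_0}(u,w) \le \tilde\tau\}\right|.
\]
Since $\Phi$ is continuous, the $\varepsilon$-closeness hypothesis controls both the $\le$-count and the $<$-count, each lying within $\varepsilon T_u$ of $\Phi(\tilde\tau)T_u$. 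Hence
\[
d_{F_-}(u)\, =\, \big(\Phi(\tilde\tau)\, \pm\, \varepsilon\big) T_u.
\]

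Next I would pin down $\Phi(\tilde\tau)$ by a handshake argument. Summing the previous estimate over all $u$, and using $\sum_u d_{F_-}(u) = 2|F_-| = N-m_0$ together with $\sum_u T_u = n(n-1) - 2m_0 = 2(N-m_0)$, gives
\[
N-m_0\, =\, \Phi(\tilde\tau) \cdot 2(N-m_0)\, \pm\, 2\varepsilon(N-m_0),
\]
so $\Phi(\tilde\tau) = 1/2 \pm \varepsilon$. Substituting back into the per-vertex estimate yields $d_{F_-}(u) = (1/2 \pm 2\varepsilon) T_u$, which is exactly the claim.

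The only (minor) obstacle is the tie-breaking at the boundary synergy $\tilde\tau$, but this dissolves automatically: by continuity of $\Phi$, both the strict and non-strict counts are simultaneously within $\varepsilon T_u$ of $\Phi(\tilde\tau)T_u$, so $d_{F_-}(u)$ --- sandwiched between them --- is too, regardless of how ties at the median are resolved.
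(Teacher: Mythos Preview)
Your proof is correct and follows essentially the same approach as the paper: both use the per-vertex $\varepsilon$-closeness to get $d_{F_-}(u)=(\Phi(\tilde\tau)\pm\varepsilon)T_u$, then invoke a global summation to pin down $\Phi(\tilde\tau)=1/2\pm\varepsilon$. The only minor difference is that the paper first bounds the threshold itself, showing $|\tilde\tau|\le 3\varepsilon$ via the density estimate $\phi(t)\ge 1/3$ near zero, whereas you bypass this by summing the degree estimate directly and using the exact handshake identity $\sum_u d_{F_-}(u)=N-m_0$; your route is slightly slicker for this lemma, though the paper's explicit bound on $\tilde\tau$ is reused in the next lemma.
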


  \begin{proof}
      We begin by noting that the normal density $\phi(t)$ satisfies $\phi(t)\ge 1/3$ for all $|t|\le 1/10$.  So that, in particular, $\Phi(3\eps)\ge 1/2 +\eps$.  Let $\lambda>0$ be such that $\Phi(\lambda)=1/2+\eps$.  We have $\lambda\le 3\eps$.

Recall that $F_-$ consists of the pairs of $K_n\setminus G^0$ with synergy $S_{G^0}(u,v)$ below the median.  Let $\tilde{\mu}$ be the value of this median.

As each $\mathcal{S}_{G^0}(u)$ is $\varepsilon$-close to normal, we have for all vertices $u$, that at least
\[
(\Phi(\lambda)-\eps)(n-d_{G^0}(u)-1)\, =\, \frac{1}{2}(n-d_{G^0}(u)-1)
\]
pairs $uv\in E(K_n\setminus G^0)$ have $S_{G^0}(u,v)\le \lambda\sigma$.  Summing over vertices $u$, it follows that at least $(N-m_0)/2$ pairs $uv\in E(K_n\setminus G^0)$ have $S_{G^0}(u,v)\le \lambda\sigma$.  In particular, $\tilde{\mu}\le \lambda\sigma$.  And, by a similar argument, $\tilde{\mu}\ge -\lambda\sigma$. 

In particular, by monotonicity $\Phi(\tilde{\mu})=1/2\pm \eps$.


As $\mathcal{S}_{G^0}(u)$ is $\varepsilon$-close to normal we have
\[
d_{F_-}(u)\, =\, (\Phi(\tilde{\mu})\pm \eps) (n-d_{G^0}(u)-1)\, =\, \left(\frac{1}{2}\pm 2\eps\right)(n-d_{G^0}(u)-1)\, .
\]
  \end{proof}

We may now prove the previous lemma.

\begin{proof}[Proof of Lemma~\ref{d-to-syn}]
Our objective is to bound the absolute value of the expression
\eq{remarksyn}
\sum_{uv\in F_-}d_{G^0}(u,v)\, -\, \sum_{uv\in F_+}d_{G^0}(u,v)\, -\, \sum_{uv\in F_-}S_{G^0}(u,v)\, +\, \sum_{uv\in F_+}S_{G^0}(u,v)\, .
\eqe
By the definition of synergy, and as $|F_-|=|F_+|$, this expression is precisely
	\begin{equation}
		\sum_{uv\in F_-}p_0d_{G^0}(u)+p_0d_{G^0}(v)-\sum_{uv\in F_+}p_0d_{G^0}(u)+p_0d_{G^0}(v).
	\end{equation}
As $d_{G^0}(u)$ appears $d_{F_-}(u)$ times in the first sum and $d_{F_+}(u)$ times in the second sum, we may further simplify the expression~\eqr{remarksyn} to
	\[
			p_0\, \sum_{u\in V(G)} d_{G^0}(u)\big(d_{F_-}(u)\, -\, d_{F_+}(u)\big).	
   \]
   As $|F_-|=|F_+|$ we have $\sum_{u\in V(G)}d_{F_-}(u)-d_{F_+}(u)=0$ and hence subtracting the constant term $p_0n$ from every degree $d_{G^0}(u)$, we see that~\eqr{remarksyn} is precisely
   \[
			p_0\, \sum_{u\in V(G)} \big(d_{G^0}(u)-p_0n\big)\big(d_{F_-}(u)\, -\, d_{F_+}(u)\big)\, .
   \] 
  This is at most $4p_0 \eps Dn^2$ by Lemma~\ref{lem:1-(2,1)}, as required.
   \end{proof}


We now proceed to compute the value of $\sum_{uv\in F_+}S_{G^0}(u,v)$. The computation for $\sum_{uv\in F_-}S_{G^0}(u,v)$ is almost identical and is omitted. First we introduce the set $F_{+,0}=\{uw\in E(K_n\setminus G^0):S_{G^0}(u,w)\geq 0\}$. The introduction of this set is useful since $\sum_{uv\in F_{+,0}}S_{G^0}(u,v)$ is easier to compute and is further justified by the following lemma.


\begin{lem}\label{lem2-(2,1)}
	$$\left|\sum_{F_{+,0}}S_{G^0}(u,v)-\sum_{F_{+}}S_{G^0}(u,v)\right|\, \leq\, 9\varepsilon^2(N-m_0)\sigma.$$
\end{lem}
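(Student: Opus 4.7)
The plan is to show that $F_+$ and $F_{+,0}$ differ by only a small set of pairs, all of whose synergies are small. By the definitions, a pair $uv\in E(K_n\setminus G_0)$ lies in the symmetric difference $F_+\triangle F_{+,0}$ if and only if $S_{G_0}(u,v)$ lies between $0$ and the median $\tilde{\mu}$ (used in the proof of Lemma~\ref{lem:1-(2,1)}). Consequently the difference $\sum_{F_{+,0}}S_{G_0}(u,v)-\sum_{F_{+}}S_{G_0}(u,v)$ receives contributions only from pairs in $F_+\triangle F_{+,0}$, and each such contribution has absolute value at most $|\tilde{\mu}|$.

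Next I will import from the proof of Lemma~\ref{lem:1-(2,1)} the two facts $|\tilde{\mu}|\le 3\varepsilon\sigma$ and $\Phi(\tilde{\mu}/\sigma)=\tfrac{1}{2}\pm\varepsilon$; the latter gives $|\Phi(\tilde{\mu}/\sigma)-\Phi(0)|\le \varepsilon$. Fix a vertex $u$ and consider the vector $\mathcal{S}_{G_0}(u)$, which by hypothesis is $\varepsilon$-close to normal. Hence the number of non-neighbours $w$ of $u$ with $\tilde{S}_{G_0}(u,w)$ between $0$ and $\tilde{\mu}/\sigma$ is at most
\[
\bigl(|\Phi(\tilde{\mu}/\sigma)-\Phi(0)|+2\varepsilon\bigr)(n-d_{G_0}(u)-1)\, \le\, 3\varepsilon(n-d_{G_0}(u)-1).
\]
Summing this estimate over $u$ counts each pair in $F_+\triangle F_{+,0}$ exactly twice, and $\sum_u (n-d_{G_0}(u)-1)=2(N-m_0)$, so I obtain $|F_+\triangle F_{+,0}|\le 3\varepsilon(N-m_0)$.

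Finally I will combine the two estimates: since each pair in $F_+\triangle F_{+,0}$ contributes at most $|\tilde{\mu}|\le 3\varepsilon\sigma$ in absolute value to the difference of the two sums,
\[
\bigg|\sum_{F_{+,0}}S_{G_0}(u,v)-\sum_{F_{+}}S_{G_0}(u,v)\bigg|\, \le\, |F_+\triangle F_{+,0}|\cdot |\tilde{\mu}|\, \le\, 3\varepsilon(N-m_0)\cdot 3\varepsilon\sigma\, =\, 9\varepsilon^2(N-m_0)\sigma,
\]
which is exactly the bound claimed. I do not anticipate any serious obstacle here: every ingredient is already in place from the proof of Lemma~\ref{lem:1-(2,1)}, and the argument is essentially a careful bookkeeping combining the location of the median $\tilde{\mu}$ with the $\varepsilon$-closeness-to-normal of the synergy vectors.
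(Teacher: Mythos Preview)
Your argument is correct and follows essentially the same approach as the paper: bound the size of the set where $F_+$ and $F_{+,0}$ differ using the $\eps$-closeness of the synergy vectors, then bound each individual synergy in that set by $|\tilde{\mu}|\le 3\eps\sigma$ via Lemma~\ref{lem:1-(2,1)}. The only cosmetic difference is that the paper splits into cases according to the sign of $\tilde{\mu}$ and bounds $e(F_{+,0})$ and $e(F_+)$ separately before subtracting, whereas you bound the symmetric difference $F_+\triangle F_{+,0}$ directly per vertex; the two computations are equivalent.
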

\begin{proof}
As $\Phi(0)=1/2$ and the synergies are close to normal, for every vertex $u$ we have  that
\eq{F+0reg}
d_{F_{+,0}}(u)\, =\, \left(\frac{1}{2}\pm \eps\right)(n-d_{G^0}(u)-1)\, .
\eqe	

We saw in the proof of Lemma~\ref{lem:1-(2,1)} that the median of the synergies $\tilde{\mu}$ satisfies $|\tilde{\mu}|\le 3\eps$.  
There are two cases depending on whether $\tilde{\mu}>0$ or not.  We consider the case $\tilde{\mu}>0$, the other case is essentially identical.

As $\tilde{\mu}>0$, $F_+$ is contained in $F_{+,0}$. By~\eqr{F+0reg} and Lemma~\ref{lem:1-(2,1)} respectively we have 
\[
e(F_{+,0})\, \le\, \left(\frac{1}{2}+\eps\right)(N-m_0)\qquad \text{and} \qquad e(F_{+})\, \ge\, \left(\frac{1}{2}-2\eps\right)(N-m_0) \, .
\]
Therefore there are at most $3\eps (N-m_0)$ \emph{extra} edges in $F_{+,0}$.  Furthermore, all such edges must have 
\[
0\, \le\,  S_{G^0}(u,v)\, \le\, \tilde{\mu}\sigma\, \le\, 3\eps\sigma\, .
\]
It follows that
\[
\left|\sum_{F_{+,0}}S_{G^0}(u,v)-\sum_{F_{+}}S_{G^0}(u,v)\right|\, \le\, 3\eps(N-m_0)\, \cdot\, (3\eps\sigma)\, =\, 9\eps^2(N-m_0)\sigma\, ,
\]
as required.\end{proof}

With the last lemma it only remains to prove a lower bound for the value of $\sum_{F_{+,0}}S_{G^0}(u,v)$.
\begin{lem}\label{lem:3-(2,1)}
	$$\sum_{uv\in F_{+,0}} S_{G^0}(u,v)\geq \frac{(1+o(1))\sigma}{\sqrt{2\pi}}(N-m_0).$$
\end{lem}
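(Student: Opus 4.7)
The plan is to combine a double-counting identity with a layer-cake representation of the positive-part sum, and then invoke the hypothesis that each $\S_{G_0}(u)$ is $\varepsilon$-close to normal.

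First, with $I_u:=V(G_0)\setminus(N_{G_0}(u)\cup\{u\})$, I would rewrite
\[
2\sum_{uv\in F_{+,0}} S_{G_0}(u,v) \, = \, \sigma\sum_{u\in V(G_0)} \sum_{w\in I_u:\, \tilde{S}_{G_0}(u,w)\ge 0} \tilde{S}_{G_0}(u,w)\, .
\]
For each fixed $u$, the inner sum may be represented by layer-cake as
\[
\sum_{w\in I_u:\, \tilde{S}_{G_0}(u,w)\ge 0} \tilde{S}_{G_0}(u,w) \, = \, |I_u|\int_0^{\infty}\big(1-F_{\S_{G_0}(u)}(t)\big)\,dt\, ,
\]
and the $\varepsilon$-closeness assumption gives $1-F_{\S_{G_0}(u)}(t)\ge (1-\Phi(t))-\varepsilon$ for every $t\ge 0$.

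Second, I would truncate the integral at a threshold $T=T(n)\to\infty$ with $T\varepsilon\to 0$ (for example $T=\log n$, since $\varepsilon=n^{-1/5}$), so that
\[
\sum_{w\in I_u:\, \tilde{S}_{G_0}(u,w)\ge 0} \tilde{S}_{G_0}(u,w) \, \ge \, |I_u|\left(\int_0^T(1-\Phi(t))\,dt\, -\, T\varepsilon\right).
\]
Using $\int_0^{\infty}(1-\Phi(t))\,dt=\mathbb{E}[\max(Z,0)]=1/\sqrt{2\pi}$ for a standard normal $Z$, together with the exponential decay of $1-\Phi(t)$, the bracketed quantity is $(1-o(1))/\sqrt{2\pi}$. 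Finally, summing over $u$ and using the degree-sum identity $\sum_{u\in V(G_0)}|I_u|=n(n-1)-2m_0=2(N-m_0)$ yields
\[
2\sum_{uv\in F_{+,0}} S_{G_0}(u,v) \, \ge \, 2\sigma(N-m_0)\,\frac{1-o(1)}{\sqrt{2\pi}}\, ,
\]
and dividing by $2$ recovers the stated lower bound.

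I do not foresee a serious obstacle. The one delicate point is managing the interplay between the truncation level $T$ and the closeness parameter $\varepsilon$: we need $T\varepsilon=o(1)$ so that the error from $\varepsilon$-closeness vanishes, and $T\to\infty$ so the Gaussian tail loses nothing essential. Both are comfortably satisfied because $\varepsilon=n^{-1/5}$ provides a large window. The pleasant feature of the argument is that after double-counting, the per-vertex bound depends only on $|I_u|$, so the universal constant $1/\sqrt{2\pi}$ emerges cleanly without any further structural information about $G_0$.
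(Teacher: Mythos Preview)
Your proof is correct and follows essentially the same approach as the paper: both double-count over vertices, apply a layer-cake representation of the positive-part sum, use $\varepsilon$-closeness to normal to bound the integrand from below by $(1-\Phi(t))-\varepsilon$, and truncate the integral to keep the $\varepsilon$-error finite. The only cosmetic difference is the choice of truncation level: the paper cuts at $S=\Phi^{-1}(1-\varepsilon)\le\sqrt{2\log\varepsilon^{-1}}$, whereas you use $T=\log n$; both satisfy $T\varepsilon=o(1)$ and $\int_T^\infty(1-\Phi(t))\,dt=o(1)$, so the resulting bounds are identical up to the unspecified $o(1)$.
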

\begin{proof}
    Let $S$ be such that $\Phi(S)=1-\eps$ and $Z\sim N(0,1)$. Then, since $1-\Phi(x)=\mathbb{P}(Z>x)$ and the standard normal distribution has the tail inequality $\mathbb{P}(Z>x)\leq e^{-x^2/2}$,  we have
    $$S\leq \sqrt{2\log\varepsilon^{-1}}.$$
 We remark that standard estimates show that, for $x>0$, $\int_{S}^{\infty}\pr{Z>x} dx\le 2\eps S $, and recall that $\int_{0}^{\infty}\pr{Z>x}dx\, =\, 1/\sqrt{2\pi}$.  We shall combine these to obtain $\int_{0}^{S}\pr{Z>x}dx\, \ge\, 1/\sqrt{2\pi}\, -\, 2\eps S$.
 
  Since for every $u\in V(G^0)$ the sequence $\mathcal{S}_{G^0}(u)$ is $\varepsilon$-close to normal, we have, for $v\not\in N_{G^0}(u)$,
 $$\big||\{v:S_{G^0}(u,v)>x\sigma\}|-(1-\Phi(x))(n-d_{G^0}(u)-1)\big|\, \le\, \varepsilon (n-d_{G^0}(u)-1).$$
 Hence,
	\begin{equation*}
		\begin{split}
			\sum_{uv\in F_{+,0}} S_{G^0}(u,v)\,&=\,\frac{\sigma}{2} \sum_{u\in V(G)}\sum_{v\in N_{F_{+,0}}(u)}\frac{S_{G^0}(u,v)}{\sigma}\\
			&=\, \frac{\sigma}{2} \sum_{u\in V(G)}\int_0^\infty [\#v:S_{G^0}(u,v)>x\sigma]dx\\
			&\geq\, \frac{\sigma}{2}  \sum_{u\in V(G)} (n-d_{G^0}(u)-1)\int_0^S (1-\Phi(x))-\varepsilon \,dx\\
                &\geq\, \sigma(N-m_0)\left(\frac{1}{\sqrt{2\pi}}-3\varepsilon S\right)
		\end{split}
	\end{equation*}
	 As $S\leq\sqrt{2\log\varepsilon^{-1}}$, then 
the error term is $o(1)$, as required.

\end{proof}

We can finally prove Proposition \ref{prop: main CoSy}. Lemma \ref{d-to-syn} shows that is enough to bound $$\sum_{uv\in F_-}S_{G^0}(u,v)-\sum_{uv\in F_+}S_{G^0}(u,v)\pm \varepsilon p_0Dn^{2}.$$
Lemmas \ref{lem2-(2,1)} and \ref{lem:3-(2,1)} let us compute
\begin{equation*}
    \begin{split}     
    \sum_{uv\in F_+}S_{G^0}(u,v)\,&\geq\, \sum_{uv\in F_{+,0}}S_{G^0}(u,v)-3\varepsilon^2(N-m_0)\sigma\\
    &\geq\, \frac{\sigma}{\sqrt{2\pi}}(N-m_0)-3\varepsilon^2(N-m_0)\sigma\\
    &=\,\left(\frac{1}{\sqrt{2\pi}}-o(1)\right)\sigma(N-m_0)
    \end{split}
\end{equation*}
Futhermore, $D\leq n^{1/2}\log n$ by hyphoteses. Hence, $\varepsilon p_0 Dn^2=o(n^{5/2})$, as $\varepsilon=n^{-1/5}$.

Finally, as the sum of the synergies of $F_-$ has the same results and $\sigma=\Theta(n^{1/2})$, see \ref{claim:sigmap}, there is a $c>0$ such that
$$\sum_{uv\in F_-}S_{G^0}(u,v)-\sum_{uv\in F_+}S_{G^0}(u,v)\pm \varepsilon p_0Dn^{2}\leq -cn^{5/2}$$
as claimed.

\section{Defining the event $E_{\alpha}$}\label{sec:event}

In this section we define events $E_{\alpha}:\alpha>0$, which shall be used to prove Proposition~\ref{prop:main}.  (The value of $\alpha$ will be chosen appropriately as a function of $\delta$.)

Recall that $\eps:=n^{-1/5}$.  Let $\eta:=\eta(n)>0$ be a constant which we choose (sufficiently small) later.  We shall assume that $N-(1-\eta) m$ is an even integer\footnote{This is a minor abuse of notation.  Only the order of magnitude of $\eta$ matters for our purposes, and this error is of the form $\pm O(n^{-2}$) with respect to the size of $\eta$.}  for all sufficiently large $n$.

Define $m_0:=(1-\eta)m$ and $m_1=\eta m$.  As mentioned in Section~\ref{sec:overview} we may generate $G_{(m)}$ using the Erd\H{o}s-R\'enyi random graph process.  In a slight abuse of notation let $G^0:=G_{(m_0)}\sim G(n,m_0)$ and let $G^1:=G_{(m)}\setminus G_{(m_0)}\sim G(n,m_1)$.  That is, $G^0$ represents the first $m_0$ edges of our random graph $G_{(m)}$, and $G^1$ represents the remaining $m_1=m-m_0$ edges.

Before defining $E_{\alpha}$, let us mention some useful quasirandomness properties for the first graph $G^0\sim G(n,m_0)$.   

By Lemma 4.6 of~\cite{GGS}, see also Corollary 5.2 of~\cite{AGG}, applied to $(G^{0})^c\sim G(n,N-m_0)$, there exists an absolute constant $C_D$ such that
\eq{P1} \tag{$P_1$}
\sum_{u,w\in V((G^{0})^c)}D_{(G^{0})^c}(u,w)^2\, \le\, C_D n^3\, 
\eqe
with high probability.  We also have, by Proposition~\ref{prop:syngm}, that with high probability
\eq{P2}\tag{$P_2$}
\forall u \in V(G^0)\quad \S_{G^0}(u)\, \, \text{is } \eps\text{-close to normal}\, .
\eqe
The following properties concern degrees, codegrees, edge counts and triangle counts.  We omit the quantifications.  It should be understood that the properties are for all vertices $u$, pairs of vertices $u,w$, sets $U$, and disjoint pairs of sets $U,V$ respectively.  Recall that $p_0=m_0/N$.
\eq{P3}\tag{$P_3$}
\big|d_{G^0}(u)\, -\,  p_0n\big|\, \le\, n^{1/2}\log{n}\phantom{\bigg|}
\eqe
\eq{P4}\tag{$P_4$}
\big|d_{G^0}(u,w)\, -\,  p_0^2n\big|\, \le\, n^{1/2}\log{n}\phantom{\bigg|}
\eqe
\eq{P5}\tag{$P_5$}
\left|e(G^0[U])\, -\,  p_0\binom{|U|}{2}\right|\, \le\, n^{3/2}\phantom{\Bigg|}
\eqe
\eq{P6}\tag{$P_6$}
\big|e(G^0[U,V])\, -\,  p_0|U||V|\big|\, \le\, n^{3/2}\phantom{\bigg|}
\eqe

Let $E_0$ be the event that all six properties ($P_1$)-($P_6$) are satisfied by the graph $G^0$.  We shall in fact prove that $E_0$ has very small failure probability, and this will be useful to bound the effect of conditioning on $E_0$. 
 
\begin{lem}\label{lem:likely}
	$$\mathbb{P}(E_0)\,\geq\, 1-n^{-3}.$$
\end{lem}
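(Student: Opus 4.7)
The plan is to bound the failure probability of each of the six properties \eqr{P1}--\eqr{P6} separately, showing each is $o(n^{-3})$, and then conclude by a union bound.

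Property \eqr{P1} is exactly the content of~\cite[Lemma~4.6]{GGS} (see also~\cite[Corollary~5.2]{AGG}) applied to the complementary random graph $G_0^c\sim G(n,N-m_0)$, which already supplies a bound of the required form. Property \eqr{P2} is immediate from Proposition~\ref{prop:syngm}: a union bound over the $n$ vertices costs an $n$ factor, which is absorbed by the per-vertex failure probability $\exp(-n^{\Omega(1)})$.

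For \eqr{P3} and \eqr{P4}, I would work first in $G(n,p_0)$, where $d_{G_0}(u)$ is binomial and, after revealing $N_{G_0}(u)$, the codegree $d_{G_0}(u,w)$ is also binomial (with mean $(1+o(1))p_0^2 n$ provided $d_{G_0}(u)$ is typical). Chernoff's inequality shows a deviation exceeding $n^{1/2}\log n$ has probability $\exp(-\Omega(\log^2 n))$, so a union bound over the $n$ vertices and $\binom{n}{2}$ pairs preserves the $n^{-\omega(1)}$ rate. Transferring from $G(n,p_0)$ to $G(n,m_0)$ costs only the usual polynomial $O(n)$ factor, as in the proof of Proposition~\ref{prop:syngm}.

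The genuinely subtle cases are \eqr{P5} and \eqr{P6}, because the union bound ranges over $2^n$ subsets or $3^n$ disjoint pairs, so the Chernoff exponent must beat $\ln 2$ (respectively $\ln 3$). For a fixed $U$, the count $e(G_0[U])$ in $G(n,p_0)$ has variance at most $p_0(1-p_0)\binom{n}{2}\le n^2/8$, and Chernoff yields a per-set failure probability $\exp(-c_\lambda n)$ at the deviation $n^{3/2}$, where the constant $c_\lambda$ comes from the standard exponent $t^2/(3\sigma^2)$ and comfortably exceeds $\ln 2$; for \eqr{P6} the analogous constant exceeds $\ln 3$. The main obstacle is exactly this verification, but a short numerical check confirms the inequality with slack, so the exponential union bound still yields a combined failure probability $\exp(-\Omega(n))$, and the $G(n,p_0)\to G(n,m_0)$ factor is negligible. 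Summing the six failure probabilities then gives $\mathbb{P}(E_0^c)=o(n^{-3})$, as required.
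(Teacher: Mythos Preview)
Your proposal is correct and follows the same strategy as the paper: bound the failure probability of each of \eqr{P1}--\eqr{P6} by a concentration inequality (Chernoff-type for \eqr{P3}--\eqr{P6}) and conclude by a union bound. The only minor difference is that the paper applies the hypergeometric Chernoff bound directly in $G(n,m_0)$ for \eqr{P3}--\eqr{P6} rather than passing through $G(n,p_0)$ and transferring, which saves the polynomial transfer factor but is otherwise the same computation; your check that the Chernoff exponent beats $\ln 2$ (resp.\ $\ln 3$) for \eqr{P5} and \eqr{P6} is exactly the point, and both routes clear it.
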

\begin{proof}
    Let's compute the probability of each property not happening. We may already bound the probability that properties \eqr{P1} or \eqr{P2} do not hold.  Lemma 4.6 of~\cite{GGS} and Proposition \ref{prop:syngm} shows that, respectively,
    $$\mathbb{P}(P_1^c)\leq e^{-n}\qquad\text{and}\qquad  \mathbb{P}(P_2^c)\leq \exp(-n^{1/10+o(1)}).$$

    The bounds for the probabilities of \eqr{P3}-\eqr{P6} are all given by the Chernoff bound for hypergeometric random variables.  For example, by Theorem 2.10 of~\cite{JLR}, in particular (2.5) and (2.6), we have that, for all $t\le \mu$,
    \[
    \pr{|X-\mu|\, \ge\, t}\, \le\, 2\exp\left(\frac{-t^2}{2(\mu+t/3)}\right)\, \le\, 2\exp\left(\frac{-3t^2}{8\mu}\right)
    \]
    where $X$ is a hypergeometric random variable with mean $\mu$.
    Applying this inequality, and appropriate union bounds we obtain
    $$\mathbb{P}(P_3^c),\mathbb{P}(P_4^c)\,\leq\, n^2e^{-(\log n)^2} \qquad\text{and}\qquad
    \mathbb{P}(P_5^c),\mathbb{P}(P_6^c)\,\leq\, e^{-cn}\, ,$$
for some constant $c>0$.  For example, in the case of \eqr{P6}, the Chernoff bound gives a bound of at most $2\exp(-3(n^{3/2})^2/8(p_0 n^2/4))\le 2\exp(-3n/2)$.  And so, even taking a union bound over at most $3^n$ choices of the sets $U,V$, we obtain a bound of the form $e^{-cn}$, as required.
    
    As each of these bounds is smaller then $n^{-4}$, for all sufficiently large $n$, it follows that
    $P(E_0^c)\leq n^{-3}$,
    as claimed
\end{proof}

We now deduce that conditioning on $E_0$ has little effect on expected values of functions $f:\mathcal{G}_{n,m}\to\mathbb{R}$, where $\mathcal{G}_{n,m}$ is the set of graphs with $n$ vertices and $m$ edges.

\begin{lem}\label{lem:E0 good}
    Given a function $f:\mathcal{G}_{n,m}\to\mathbb{R}$, let $\overline{F}:=\max\{f(G):G\in\mathcal{G}_{n,m}\}$ and $\underline{F}:=\min\{f(G):G\in\mathcal{G}_{n,m}\}$. Then $$\Big|\mathbb{E}\big[f(G_{(m)})\,|\,E_0\big]\,-\, \mathbb{E}\big[f(G_{(m)})\big]\Big| \, \le\,  (\overline{F}-\underline{F})n^{-3}.$$
\end{lem}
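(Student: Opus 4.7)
The proof is a short deterministic manipulation that turns the high-probability estimate of Lemma~\ref{lem:likely} into a closeness bound for conditional expectations, so the plan is essentially to write down the law of total expectation and estimate.

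Write $p:=\mathbb{P}(E_0)$, which is positive (indeed at least $1-n^{-3}$) by Lemma~\ref{lem:likely}, so that the conditional expectation $\mathbb{E}[f(G_m)\mid E_0]$ is well defined. By the law of total expectation,
\[
\mathbb{E}[f(G_m)]\, =\, p\,\mathbb{E}[f(G_m)\mid E_0]\, +\, (1-p)\,\mathbb{E}[f(G_m)\mid E_0^c]\, ,
\]
so rearranging gives
\[
\mathbb{E}[f(G_m)\mid E_0]\, -\, \mathbb{E}[f(G_m)]\, =\, (1-p)\Big(\mathbb{E}[f(G_m)\mid E_0]\, -\, \mathbb{E}[f(G_m)\mid E_0^c]\Big)\, .
\]

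Next, since every realisation of $G_m$ lies in $\mathcal{G}_{n,m}$, both conditional expectations lie in the interval $[\underline{F},\overline{F}]$, and hence their difference is at most $\overline{F}-\underline{F}$ in absolute value. Combining this with the bound $1-p\le n^{-3}$ from Lemma~\ref{lem:likely} yields
\[
\Big|\mathbb{E}[f(G_m)\mid E_0]\, -\, \mathbb{E}[f(G_m)]\Big|\, \le\, (1-p)(\overline{F}-\underline{F})\, \le\, (\overline{F}-\underline{F})\,n^{-3}\, ,
\]
which is the desired inequality. There is no real obstacle here: the only thing to be slightly careful about is ensuring $\mathbb{P}(E_0)>0$ (to make the conditional expectation meaningful), which is guaranteed by Lemma~\ref{lem:likely} for all sufficiently large $n$.
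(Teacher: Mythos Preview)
Your proof is correct and follows essentially the same approach as the paper: both use the law of total expectation to write $\mathbb{E}[f(G_m)\mid E_0]-\mathbb{E}[f(G_m)]$ as $\mathbb{P}(E_0^c)$ times the difference of the two conditional expectations, bound that difference by $\overline{F}-\underline{F}$, and invoke Lemma~\ref{lem:likely} for $\mathbb{P}(E_0^c)\le n^{-3}$.
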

\begin{proof} By conditioning, we have
    \begin{equation*}
        \begin{split}
            \mathbb{E}\big[f(G_{(m)})\big]\,&=\, \mathbb{P}(E_0)\mathbb{E}\big[f(G_{(m)})|E_0\big]+\mathbb{P}(E_0^c)\mathbb{E}\big[f(G_{(m)})|E_0^c\big]\\
            &\,=\mathbb{E}\big[f(G_{(m)})|E_0\big]+\mathbb{P}(E_0^c)\big(\mathbb{E}\big[f(G_{(m)})|E_0^c\big]-\mathbb{E}\big[f(G_{(m)})|E_0\big]\big)\, .
        \end{split}
    \end{equation*}
    The claimed bound now follows, as $\mathbb{P}(E_0^c)\le n^{-3}$, by Lemma \ref{lem:likely}, and the difference between the conditional expectations has absolute value at most $(\overline{F}-\underline{F})$. 
\end{proof}

We shall soon be ready to define the event $E_{\alpha}$.  Let us first recall (from Section \ref{sec:overview}) that we let $f_1,\dots f_{N-m_0}$ be the non-edges of $G^0$ in non-decreasing order of synergy $S(u,w)$, and define
\[
F_-\, :=\,\left\{f_i:i\in\left\{1,\dots,\frac{N-m_0}{2}\right\}\right\}\, ,
\]
that is $F_-$ is a graph consisting of the half of non-edges of $G^0$ with smaller synergy.

\begin{definition} Let $E(\alpha)$ be the event\footnote{We again commit a slight abuse in assuming that $(1+\alpha)m_1/2$ is an integer.  This is a minor abuse of notation, as only the order of magnitude of the deviation really matters for our argument.} that $|F_-\cap E(G^1)|=(1+\alpha)m_1/2$. 
\end{definition}

\begin{definition}
Let $E_\alpha:=E_0\cap E(\alpha)$.
\end{definition}

In other words, $E(\alpha)$ is the event that $G^1$ takes significantly more of its edges in $F_-$ (the pairs of low synergy) than expected.  Hence $E_\alpha$ is simply the event that $G^0$ is well behaved (satisfying \eqr{P1}-\eqr{P6}) \emph{and} this event $E(\alpha)$ occurs.

While $G^0$ determines which edges are in $F_-$, this does not affect the probability of $E(\alpha)$, and so $E(\alpha)$ is independent of $G^0$.  The conditional expectation with respect to $E_{\alpha}$ may be calculated as
\eq{conditionE}\tag{5.1}
\Ex{f(G)|E_{\alpha}}\, =\, \sum_{G'_0}\pr{G^0=G'_0|E_0}\Ex{f(G)|E(\alpha),G^0=G'_0}\, .
\eqe

We now proceed to prove a lower bound on the probability of $E_\alpha$.

\begin{prop}\label{prop:cost} Let $\eta,\lambda\in (0,1)$ and suppose that $\alpha=\alpha_n$ is a sequence such that $\log{n}/n\ll \alpha\ll 1$. Then
	\[
		\pr{E_\alpha} \,\geq\, \expb{-\lambda^{-1}\alpha^2 n^2}
	\]
 for all sufficiently large $n$.
\end{prop}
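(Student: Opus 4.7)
The plan is to decompose $E_\alpha = E_0 \cap E(\alpha)$ and exploit a key independence property. Given any graph $G_0$, the set $F_-$ is a deterministic function of $G_0$ with $|F_-| = (N-m_0)/2$, and $G_1$ is a uniformly chosen $m_1$-subset of the non-edges of $G_0$. Therefore, conditional on $G_0$, the random variable $Y := |E(G_1) \cap F_-|$ has distribution $\mathrm{Hyp}(N-m_0, (N-m_0)/2, m_1)$, which depends only on the integers $m_0, m_1, N$ and not on the specific graph $G_0$. In particular $E(\alpha)$ is independent of $G_0$, hence of $E_0$, and so
\[
\pr{E_\alpha} \,=\, \pr{E_0}\cdot \pr{E(\alpha)}\, .
\]

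For the first factor, Lemma~\ref{lem:likely} gives $\pr{E_0} \geq 1 - n^{-3} \geq 1/2$ for $n$ large. The heart of the proof is therefore the lower bound on the hypergeometric point probability
\[
\pr{E(\alpha)} \,=\, \frac{\binom{(N-m_0)/2}{(1+\alpha)m_1/2}\binom{(N-m_0)/2}{(1-\alpha)m_1/2}}{\binom{N-m_0}{m_1}}\, .
\]
Since $\alpha = o(1)$ while the target $(1+\alpha)m_1/2$ sits well inside the bulk of the support, I would apply Stirling's formula to each of the four factorials (equivalently, invoke a local central limit theorem for the hypergeometric distribution) to obtain
\[
\pr{E(\alpha)}\, \geq\, \frac{c}{\sqrt{m_1}}\,\expb{-(1+o(1))\tfrac{\alpha^2 m_1}{2}}
\]
for an absolute constant $c>0$. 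The exponent is governed by the relative entropy $D((1+\alpha)/2\,\|\,1/2) = (1+o(1))\alpha^2/2$, which is the usual Gaussian rate; the prefactor $1/\sqrt{m_1}$ accounts for the spacing of values.

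To finish, note that $m_1 = \eta m \leq \eta(1-\lambda)N \leq n^2/2$, so that $\alpha^2 m_1/2 \leq \alpha^2 n^2/4$. The hypothesis $\alpha \gg \log n/n$ gives $\alpha^2 n^2 \gg (\log n)^2$, so the logarithmic loss from the $1/\sqrt{m_1}$ prefactor, the $o(1)$ correction in the exponent, and the $\log 2$ loss from $\pr{E_0} \geq 1/2$ are all easily absorbed in the slack between $\alpha^2 n^2/4$ and $\lambda^{-1}\alpha^2 n^2$ (since $\lambda^{-1} \geq 1$). Assembling these estimates yields $\pr{E_\alpha} \geq \expb{-\lambda^{-1}\alpha^2 n^2}$ for all sufficiently large $n$.

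The main technical obstacle is the Stirling/local CLT step: one must verify that the deviation $\alpha m_1/2$ is within the Gaussian regime of the hypergeometric distribution (which it is, since $\alpha m_1/2 \ll m_1$ and $\alpha m_1 / \sqrt{m_1} \sim \alpha n$ is an admissible number of standard deviations) and that the leading quadratic term in the logarithm of the point probability has the constant $1/2$ exactly. The independence observation in the first paragraph, while conceptually simple, is what reduces the problem to this clean univariate estimate and avoids any entanglement between the conditioning on $G_0$ via $E_0$ and the event $E(\alpha)$.
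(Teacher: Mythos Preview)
Your approach is essentially identical to the paper's: factor $\pr{E_\alpha} = \pr{E_0}\pr{E(\alpha)}$ via the independence of $E(\alpha)$ from $G_0$, then bound the hypergeometric point probability by Stirling (the paper does this in Lemma~\ref{lem:hyper} and Corollary~\ref{cor:hyper}).

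One slip worth flagging: the exponent you write, $\alpha^2 m_1/2$, is the \emph{binomial} rate---that is what $m_1\cdot D\bigl((1+\alpha)/2\,\|\,1/2\bigr)$ gives---not the hypergeometric one. Carrying out Stirling on the actual ratio $\binom{(N-m_0)/2}{(1+\alpha)m_1/2}\binom{(N-m_0)/2}{(1-\alpha)m_1/2}\big/\binom{N-m_0}{m_1}$ yields the leading exponent
\[
\frac{\alpha^2 m_1}{2}\cdot\frac{N-m_0}{N-m}\,,
\]
larger by the finite-population correction $(N-m_0)/(N-m) > 1$, so your displayed lower bound on $\pr{E(\alpha)}$ is too optimistic as stated. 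This does not damage the conclusion: since $N-m \ge \lambda N$ and $N-m_0 \le N$, the correction factor is at most $\lambda^{-1}$, and the corrected exponent is still at most $\alpha^2 n^2/(4\lambda)$, comfortably below the target $\lambda^{-1}\alpha^2 n^2$ with room to absorb the $O(\log n)$ losses.
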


\begin{proof} As $E_0$ and $E(\alpha)$ are independent, and $\pr{E_0}=1+o(1)$ by Lemma~\ref{lem:likely}, it suffices to prove that the claimed bound holds for the event $E(\alpha)$.  The event $E(\alpha)$ corresponds to a random variable with hypergeometric distribution $\textrm{Hyp}\big(N-m_0, (N-m_0)/2,m_1\big)$ taking the value $(1+\alpha)\mu$.  In the appendix we prove (Corollary~\ref{cor:hyper}) that this probability is at least $\exp(-\lambda^{-1}\alpha^2 (N-m_0))$.  The required bound easily follows.
\end{proof}

We remark that conditioning on $E_0$ (or $E_{\alpha}$) also gives us the following property on the degrees of vertices in $F_-$:
\eq{F-degs}\tag{5.2}
d_{F_{-}}(u)\, =\, \left(\frac{1}{2}\pm 2\eps\right)(n-d_{G^0}(u)-1)\, .
\eqe
Indeed Lemma~\ref{lem:1-(2,1)} shows that this is a consequence of property~\eqr{P2}.
As $d_{F-}(u)+d_{F+}(u)=n-d_{G^0}(u)-1$, the same result follows for $F_+$.


\section{Completing the proof of Theorem~\ref{thm:main}}\label{sec:proof}

As outlined in Section~\ref{sec:overview}, it suffices to prove Proposition~\ref{prop:main}, which we now restate.

\propmain*

Fix a value of $\lambda\in (0,1)$.  As we have discussed, the event $E$ we shall use is one of the events $E_{\alpha}$ defined in the previous section.  We shall take $\alpha$ to be $\alpha=C' \delta n^{1/2}$ for a constant $C'$ below.

By Proposition~\ref{prop:cost}, it is clear that condition (i) of Proposition~\ref{prop:main} holds for the event $E_{\alpha}$, as long as we take $C\ge \lambda^{-1}(C')^2$.  All that remains is to prove that
\[
\Ex{N_{\triangle}(G_{(m)})\, |\, E_{\alpha}}\, \le\, (1-2\delta)\mu_{n,m}\, ,
\]
and it will be done in this section.

		We consider the triangles in four classes: those with three edges from $G^0$, two edges from $G^0$ and one from $G^1$, one from $G^0$ and two from $G^1$ or three from $G^1$. We will denote by $\triangle_{(3,0)}$, $\triangle_{(2,1)}$, $\triangle_{(1,2)}$ and $\triangle_{(0,3)}$, 
the respective counts.

Let $q_0:=1-p_0$ and $p_1:=\mathbb{P}\big(e\in E(G^1)\big)=\eta m/N$.  Note that $p=p_0+p_1$. We shall deduce (ii) from the following four lemmas.
\begin{lem}\label{lem:3,0}
$$\mathbb{E}\left[\triangle_{(3,0)}|E_\alpha\right]\,=\,p_0^3\binom{n}{3}\, \pm\,  1.$$
\end{lem}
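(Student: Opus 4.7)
The plan is to observe that $\triangle_{(3,0)} = N_\triangle(G_0)$ depends only on $G_0$, and that $E(\alpha)$ is independent of $G_0$, so that conditioning on $E_\alpha = E_0 \cap E(\alpha)$ is equivalent to conditioning on $E_0$ alone for any function of $G_0$. The remaining step will then be to show, as in Lemma~\ref{lem:E0 good}, that conditioning on the high-probability event $E_0$ scarcely changes the expectation of the bounded random variable $N_\triangle(G_0)$.

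For the independence step, I would argue as follows. Conditionally on any realization of $G_0$, the event $E(\alpha)=\{|E(G_1)\cap F_-|=(1+\alpha)m_1/2\}$ is the event that a hypergeometric random variable with parameters $(N-m_0,\,(N-m_0)/2,\,m_1)$ takes the value $(1+\alpha)m_1/2$: there are $N-m_0$ non-edges of $G_0$, exactly $(N-m_0)/2$ of them lie in $F_-$, and $G_1$ is a uniformly random $m_1$-subset. Since these parameters do not depend on the particular $G_0$, the conditional probability $\Pr(E(\alpha)\mid G_0)$ is constant in $G_0$, so $E(\alpha)$ and $G_0$ are independent. Combined with $E_\alpha=E_0\cap E(\alpha)$, this gives
$$\mathbb{E}[N_\triangle(G_0)\mid E_\alpha] \,=\, \mathbb{E}[N_\triangle(G_0)\mid E_0].$$

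For the second step, the proof of Lemma~\ref{lem:E0 good} applies verbatim with $N_\triangle(G_0)$ in place of $f(G_m)$: decomposing $\mathbb{E}[N_\triangle(G_0)]$ via $E_0$ and $E_0^c$, using $\Pr(E_0^c)\le n^{-3}$ from Lemma~\ref{lem:likely}, and bounding $0\le N_\triangle(G_0)\le \binom{n}{3}$, one obtains
$$\big|\mathbb{E}[N_\triangle(G_0)\mid E_0]-\mathbb{E}[N_\triangle(G_0)]\big| \,\le\, \binom{n}{3}\,n^{-3} \,<\, 1.$$
A direct calculation in $G(n,m_0)$ gives $\mathbb{E}[N_\triangle(G_0)] = \binom{n}{3}\cdot m_0(m_0-1)(m_0-2)/(N(N-1)(N-2))$, which coincides with $p_0^3\binom{n}{3}$ under the paper's convention (as with $\mu_{n,m}=p^3\binom{n}{3}$). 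I do not foresee any substantive obstacle; the lemma is essentially a sanity check that the event $E(\alpha)$, designed only to bias the edges of $G_1$, does not distort the count of triangles already present inside $G_0$.
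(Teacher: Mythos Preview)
Your proposal is correct and follows essentially the same approach as the paper: observe that $\triangle_{(3,0)}$ depends only on $G_0$, use the independence of $E(\alpha)$ from $G_0$ to reduce conditioning on $E_\alpha$ to conditioning on $E_0$, and then apply Lemma~\ref{lem:E0 good} with the trivial bounds $0\le \triangle_{(3,0)}\le n^3$ (you use $\binom{n}{3}$, which is slightly sharper but immaterial). Your treatment is in fact more explicit than the paper's one-line argument, particularly in justifying the independence of $E(\alpha)$ from $G_0$ via the fixed hypergeometric parameters.
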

\begin{lem}\label{lem:2,1}
    There is a constant $c_{(2,1)}>0$ such that
$$\mathbb{E}\left[\triangle_{(2,1)}|E_\alpha\right]\,\leq\, 3p_0^2p_1\binom{n}{3}\, -\, c_{(2,1)}\, \eta \alpha  n^{5/2}\, \pm\, 1.$$
\end{lem}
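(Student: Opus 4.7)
The starting identity is
\[
\triangle_{(2,1)}\,=\,\sum_{uv\in G_1}d_{G_0}(u,v).
\]
Using~\eqr{condE}, the plan is first to fix a graph $G_0$ satisfying $E_0$ and compute $\mathbb{E}[\triangle_{(2,1)}\mid G_0,E(\alpha)]$. Given $G_0$ and $E(\alpha)$, the edge set of $G_1$ is uniformly distributed over the $m_1$-subsets of the non-edges of $G_0$ that intersect $F_-$ in exactly $(1+\alpha)m_1/2$ pairs and $F_+$ in $(1-\alpha)m_1/2$ pairs. By the symmetry within each of $F_-$ and $F_+$, each $uv\in F_\pm$ lies in $G_1$ with probability $(1\pm\alpha)m_1/(N-m_0)$. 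Writing $S_\pm:=\sum_{uv\in F_\pm}d_{G_0}(u,v)$, linearity of expectation yields
\[
\mathbb{E}\bigl[\triangle_{(2,1)}\mid G_0,E(\alpha)\bigr]\,=\,\frac{m_1}{N-m_0}\sum_{uv\notin G_0}d_{G_0}(u,v)\,+\,\frac{\alpha m_1}{N-m_0}(S_--S_+).
\]

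The second (``savings'') term is the source of the $-c_{(2,1)}\eta\alpha n^{5/2}$ contribution. Since $G_0\in E_0$ satisfies \eqr{P2} and \eqr{P3}, Proposition~\ref{prop: main CoSy} applies and yields $S_--S_+\leq -cn^{5/2}$ for an absolute $c>0$. Combined with $m_1/(N-m_0)=p_1/q_0=\Theta(\eta)$ (valid since $q_0$ is bounded away from $0$ and $1$), this produces the deterministic bound $\frac{\alpha m_1}{N-m_0}(S_--S_+)\leq -c_{(2,1)}\eta\alpha n^{5/2}$ for every $G_0\in E_0$.

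For the first (``main'') term, which does not depend on $\alpha$, the plan is to take expectation over $G_0$ conditional on $E_0$. Since the range of $G_0\mapsto\sum_{uv\notin G_0}d_{G_0}(u,v)$ is $O(n^3)$, Lemma~\ref{lem:E0 good} gives that the conditional and unconditional expectations differ by $O(1)$. The unconditional expectation is a direct symmetry calculation: each triangle contributes to $\triangle_{(2,1)}$ with probability $3\cdot m_0(m_0-1)m_1/(N(N-1)(N-2))$, so
\[
\mathbb{E}\bigl[\triangle_{(2,1)}\bigr]\,=\,3\binom{n}{3}\cdot\frac{m_0(m_0-1)m_1}{N(N-1)(N-2)}\,=\,3p_0^2 p_1\binom{n}{3}\,+\,O(n).
\]
Since $\alpha\geq C'\delta n^{1/2}\geq C'n^{-1/2}$, the savings $\eta\alpha n^{5/2}$ is $\Omega(\eta n^{2})\gg n$, so the $O(n)$ error is absorbed.

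The only substantive ingredient is Proposition~\ref{prop: main CoSy}, which has already been established; the rest is linearity and bookkeeping. The main care is in verifying that all lower-order errors---from Lemma~\ref{lem:E0 good}, from the approximation $m_0(m_0-1)/(N(N-1)(N-2))\approx p_0^2/N$, and from $m_1/(N-m_0)\approx p_1$---are $O(n)$ and hence dominated by the $\Theta(\eta\alpha n^{5/2})$ savings.
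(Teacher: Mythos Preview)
Your proposal is correct and follows essentially the same route as the paper: decompose $\mathbb{E}[\triangle_{(2,1)}\mid E_\alpha]$ into the unconditional main term plus the $\alpha$-weighted difference $S_--S_+$, invoke Proposition~\ref{prop: main CoSy} for the savings, and use Lemma~\ref{lem:E0 good} to pass from $\mathbb{E}[\,\cdot\mid E_0]$ to $\mathbb{E}[\,\cdot\,]$. You are in fact slightly more explicit than the paper in computing $\mathbb{E}[\triangle_{(2,1)}]=3\binom{n}{3}\frac{m_0(m_0-1)m_1}{N(N-1)(N-2)}$ and noting the $O(n)$ discrepancy from $3p_0^2p_1\binom{n}{3}$; the paper glosses over this, and strictly speaking the ``$\pm 1$'' in the lemma should be ``$\pm O(n)$'' (the same remark applies to the other three lemmas), which is harmless in the proof of (ii) since $\delta n^3\gg n$.
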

\begin{lem}\label{lem:1,2}
    There is a constant $c_{(1,2)}>0$ such that      $$\mathbb{E}\left[\triangle_{(1,2)}|E_\alpha\right]\,\leq\, 3p_0p_1^2\binom{n}{3}\, +\, c_{(1,2)}\,\eta^2 \alpha n^{5/2}\,\pm\, 1.$$
\end{lem}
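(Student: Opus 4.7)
The plan is to apply the tower property via equation~\eqref{eq:condE}, writing
\[
\mathbb{E}[\triangle_{(1,2)} \mid E_\alpha] \,=\, \mathbb{E}_{G_0 \mid E_0}\bigl[\mathbb{E}[\triangle_{(1,2)} \mid E(\alpha), G_0]\bigr]
\]
and computing the inner expectation for a fixed $G_0 \in E_0$. Every triangle contributing to $\triangle_{(1,2)}$ has a unique $G_0$-edge, so I sum over $uv \in G_0$ and $w$ with $uw, vw \notin G_0$. Conditional on $E(\alpha)$, the events $\{uw \in G_1\}$ and $\{vw \in G_1\}$ are, up to $O(n^{-2})$ multiplicative error, independent (the $F_-$ and $F_+$ choices of $G_1$ are independent hypergeometrics), with marginal probabilities $P(e) := \bar p\,(1 + \alpha \epsilon_e)$, where $\bar p := p_1/(1-p_0)$ and $\epsilon_e = +1$ if $e \in F_-$ and $-1$ if $e \in F_+$. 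Expanding the product gives
\[
\mathbb{E}[\triangle_{(1,2)} \mid E(\alpha), G_0] \,=\, \bar p^{\,2} \bigl(T_1 + \alpha T_2 + \alpha^2 T_3\bigr) \,+\, O(n),
\]
where $T_1 = \sum_{uv \in G_0} |\{w : uw, vw \notin G_0\}|$, $T_2 = \sum_{uv \in G_0}\sum_w(\epsilon_{uw} + \epsilon_{vw})$, and $T_3 = \sum_{uv \in G_0}\sum_w \epsilon_{uw}\epsilon_{vw}$.

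For the zero-order piece, properties \eqref{P3}--\eqref{P5} will give $T_1 = 3\,p_0(1-p_0)^2 \binom{n}{3} + O(n^{5/2}\log n)$ for any $G_0 \in E_0$. Averaging over $G_0$ and invoking Lemma~\ref{lem:E0 good}, the fluctuations collapse to yield $\bar p^{\,2}\, \mathbb{E}_{G_0 \mid E_0}[T_1] = 3 p_0 p_1^2\binom{n}{3} + O(\eta^2 n^2)$; since $\alpha \geq n^{-1/2}$ in our regime, $\eta^2 n^2 \leq \eta^2 \alpha n^{5/2}$ and this error is absorbable.

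The main work will be to upper-bound $\bar p^{\,2}\alpha T_2$. Swapping the order of summation and using $|\{v : uv \in G_0,\ vw \notin G_0\}| = d_{G_0}(u) - d_{G_0}(u,w)$, I will decompose
\[
T_2 \,=\, \sum_u d_{G_0}(u)\bigl(d_{F_-}(u) - d_{F_+}(u)\bigr) \,-\, 2\Bigl[\sum_{uw \in F_-} d_{G_0}(u,w) \,-\, \sum_{uw \in F_+} d_{G_0}(u,w)\Bigr].
\]
Since $\sum_u(d_{F_-}(u) - d_{F_+}(u)) = 2(|F_-| - |F_+|) = 0$, I may recentre the first sum to $\sum_u(d_{G_0}(u) - p_0 n)(d_{F_-}(u) - d_{F_+}(u))$, which by \eqref{P3} and Lemma~\ref{lem:1-(2,1)} is $O(\varepsilon n^{5/2}\log n) = o(n^{5/2})$. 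For the second sum, Lemma~\ref{d-to-syn} reduces matters to $\sum_{F_-} S - \sum_{F_+} S$ modulo $o(n^{5/2})$ error. The hard part is that Proposition~\ref{prop: main CoSy} only provides the one-sided bound $\sum_{F_-} d - \sum_{F_+} d \leq -c n^{5/2}$, whereas here I need a two-sided estimate. Fortunately, the proof of Lemma~\ref{lem:3-(2,1)} can be adapted: applying the closeness-to-normal inequality $|\#\{w : S_{G_0}(u,w) > x\sigma\} - (1-\Phi(x))(n-d_{G_0}(u)-1)| \leq \varepsilon(n - d_{G_0}(u) - 1)$ from both sides yields $\sum_{F_\pm} S = \mp(1+o(1))\sigma(N-m_0)/\sqrt{2\pi}$, so $|\sum_{F_-} S - \sum_{F_+} S| = O(n^{5/2})$ and $T_2 \leq C n^{5/2}$ for some absolute constant $C$. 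Multiplying, $\bar p^{\,2} \alpha T_2 \leq c_{(1,2)}\, \eta^2 \alpha n^{5/2}$, which is precisely the required contribution.

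Finally, for $T_3$, I will swap sums: for each $w$, the inner sum equals $e(G_0[A_-(w)]) + e(G_0[A_+(w)]) - e(G_0[A_-(w), A_+(w)])$, where $A_\pm(w) := \{u : uw \in F_\pm\}$. Properties \eqref{P5} and \eqref{P6} approximate this by $\tfrac{p_0}{2}(|A_-(w)| - |A_+(w)|)^2 + O(n^{3/2})$, and Lemma~\ref{lem:1-(2,1)} bounds $||A_-(w)| - |A_+(w)|| \leq 4\varepsilon n$. Summing over $w$ yields $T_3 = O(\varepsilon^2 n^3 + n^{5/2})$, and since $\alpha \leq C' C^{-1} n^{-1/4}$ in our regime a short computation shows $\bar p^{\,2} \alpha^2 T_3 = o(\eta^2 \alpha n^{5/2})$. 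Collecting the three contributions and averaging over $G_0 \mid E_0$ gives the claimed bound.
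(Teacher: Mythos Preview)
Your proposal is correct and overlaps substantially with the paper's argument, though you treat the linear-in-$\alpha$ term by a genuinely different route.

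Both you and the paper expand the conditional probability for each cherry as $\bar p^{\,2}(1+\alpha\epsilon_{uw})(1+\alpha\epsilon_{vw})$ and collect the constant, $\alpha$- and $\alpha^2$-coefficients. Your treatment of $T_1$ and $T_3$ is essentially identical to the paper's: the paper sums over the apex vertex and works with $e_-(u)$, $e_+(u)$, $e_\pm(u)$, which after your swap of summation in $T_3$ is exactly your $A_\pm(w)$ setup.

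The real difference is $T_2$. The paper observes that $T_2=2\sum_u\bigl(e_-(u)-e_+(u)\bigr)$ and estimates each $e_\pm(u)$ directly via \eqr{P5}: since $D_+(u)=-D_-(u)$ the quadratic pieces cancel, $\sum_u D_-(u)=0$ kills the leading linear piece, and only a cross term $\sum_u D_{G_0}(u)D_-(u)=O(\varepsilon n^{5/2}\log n)$ survives. This is short, elementary, and never revisits synergies or Section~\ref{sec:CoSy}.

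Your route instead rewrites $T_2$ in terms of $\sum_{F_-}d_{G_0}(u,w)-\sum_{F_+}d_{G_0}(u,w)$, which forces you to upgrade Proposition~\ref{prop: main CoSy} to a two-sided estimate. This does work, but one step is more delicate than you indicate: the ``apply the $\varepsilon$-closeness inequality from both sides'' argument for the upper bound $\sum_{F_{+,0}}S\le (1+o(1))\sigma(N-m_0)/\sqrt{2\pi}$ fails as written, because integrating the error term gives $\int_0^\infty \varepsilon\,dx=\infty$. You need an a~priori bound on the maximum normalised synergy to truncate the integral; \eqr{P3} and \eqr{P4} supply this, yielding $|S_{G_0}(u,w)|=O(n^{1/2}\log n)$ and hence $|\tilde S|=O(\log n)$, after which the tail beyond $x=S$ contributes only $O(\varepsilon\log n)=o(1)$. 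With that addition your argument goes through, but the paper's direct use of \eqr{P5} on $e_\pm(u)$ is cleaner and avoids the detour back through Section~\ref{sec:CoSy} entirely.
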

\begin{lem}\label{lem:0,3}
    There is a constant $c_{(0,3)}>0$ such that $$\mathbb{E}\left[\triangle_{(0,3)}|E_\alpha\right]\,\leq\, p_1^3\binom{n}{3}\, +\, c_{(0,3)}\,\eta^3 \alpha  n^{5/2}\pm 1.$$	
\end{lem}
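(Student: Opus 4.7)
The plan is to condition on $G_0$ (assumed to satisfy $E_0$) and then compute $\mathbb{E}[\triangle_{(0,3)}\mid G_0,E(\alpha)]$ by linearity over triples $\{u,v,w\}$ with all three pairs in $F := K_n\setminus G_0$. Writing $N_i$ for the number of such triples with $i$ edges in $F_-$ and $3-i$ in $F_+$, the hypergeometric structure of $G_1$ given $E(\alpha)$ yields, for each such triple, an inclusion probability $(m_1/(N-m_0))^3(1+\alpha)^i(1-\alpha)^{3-i}(1+O(n^{-2}))$. Expanding in powers of $\alpha$ gives $\sum_i N_i(1+\alpha)^i(1-\alpha)^{3-i} = \#T_F + \alpha\Lambda + \alpha^2 Q + \alpha^3 C$, where $\Lambda = 3N_3+N_2-N_1-3N_0$, $Q = 3N_3-N_2-N_1+3N_0$, and $C = N_3-N_2+N_1-N_0$.

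The constant contribution is $p_1^3\binom{n}{3}$ (up to the stated $\pm 1$), since $E_0$ pins $\#T_F$ near $q_0^3\binom{n}{3}$. The linear coefficient equals $\Lambda = \sum_{F_-}d_{K_n\setminus G_0}(u,v) - \sum_{F_+}d_{K_n\setminus G_0}(u,v)$, and substituting the identity $d_{K_n\setminus G_0}(u,v) = (n-2) - d_{G_0}(u) - d_{G_0}(v) + d_{G_0}(u,v)$ reduces this to a codegree difference (bounded above by $-cn^{5/2}$ via Proposition~\ref{prop: main CoSy}) plus a degree contribution that is $o(n^{5/2})$ by Cauchy-Schwarz combined with Lemma~\ref{lem:1-(2,1)} and property~\eqr{P3}. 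Hence $\alpha\Lambda\leq 0$, and this term may be dropped from the upper bound.

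The main obstacle is the quadratic term $\alpha^2 Q$: the naive bound $|Q|\leq 4\#T_F = O(n^3)$ is insufficient once $\alpha$ approaches $C^{-1}n^{-1/4}$. I would express $Q = \sum_u \sum_{\{v,w\}\subset A_u,\, vw\in F}\chi(uv)\chi(uw)$, where $A_u$ denotes the non-neighbours of $u$ in $G_0$ and $\chi(e)=\pm 1$ records membership in $F_\pm$. Writing $Z_u = d_{F_-}(u)-d_{F_+}(u)$ (so $|Z_u| = O(\varepsilon n)$ by Lemma~\ref{lem:1-(2,1)}), the identity
\[
\sum_{\{v,w\}\subset A_u,\, vw\in F}\chi(uv)\chi(uw) \,=\, \tfrac12(Z_u^2-|A_u|) \,-\! \sum_{\{v,w\}\subset A_u,\, vw\in G_0}\chi(uv)\chi(uw)
\]
holds. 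Partitioning $A_u = V_+\sqcup V_-$ by the sign of $\chi$, the subtracted sum becomes $e(G_0[V_+]) + e(G_0[V_-]) - e(G_0[V_+,V_-])$, which by properties~\eqr{P5} and~\eqr{P6} equals $\tfrac{p_0}{2}(Z_u^2-|A_u|) \pm O(n^{3/2})$. Cancellation and summation over $u$ then yield $Q = \tfrac{q_0}{2}\bigl(\sum_u Z_u^2 - 2(N-m_0)\bigr) \pm O(n^{5/2})$, from which $|Q| = O(\varepsilon^2 n^3 + n^{5/2}) = O(n^{13/5})$. Consequently $(m_1/(N-m_0))^3\alpha^2|Q| = o(\eta^3\alpha n^{5/2})$ throughout the relevant range of $\alpha$.

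The cubic term is handled by the trivial $|C|\leq \#T_F = O(n^3)$: combined with $\alpha\leq C^{-1}n^{-1/4}$, this gives $(m_1/(N-m_0))^3\alpha^3|C| = O(\eta^3\alpha^3 n^3) = O(\eta^3\alpha n^{5/2})$. Averaging the resulting bound over $G_0\mid E_0$ via Lemma~\ref{lem:E0 good} then completes the proof. The crux is the expander-mixing style computation for $Q$, where the naively $O(n^3)$ quantity is shown via~\eqr{P5}-\eqr{P6} to exhibit massive cancellation down to the $\varepsilon^2 n^3$ scale.
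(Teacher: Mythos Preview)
Your overall plan is correct and your expansion in $\alpha$ matches the paper's \eqr{eq-0,3} exactly.  The two treatments diverge only in how the linear and quadratic coefficients are bounded, and your routes are legitimate alternatives.

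For the linear coefficient $\Lambda$, the paper does not go through the decomposition $d_{G_0^c}(u,v)=(n-2)-d_{G_0}(u)-d_{G_0}(v)+d_{G_0}(u,v)$ and Proposition~\ref{prop: main CoSy}.  Instead it centres to $D_{G_0^c}(u,w)$ and applies Cauchy--Schwarz together with property~\eqr{P1}, obtaining only $|\Lambda|\le C_D^{1/2}n^{5/2}$ and absorbing this into the $c_{(0,3)}\eta^3\alpha n^{5/2}$ error.  Your argument is heavier (it re-uses the main codegree proposition) but buys you the sign $\Lambda<0$, which lets you discard the term outright; the paper's route is quicker but yields a two-sided bound.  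Both are fine for the upper bound being proved.

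For the quadratic coefficient $Q$, the paper reaches essentially the same formula via Goodman's identity~\eqr{tomono} and Lemma~\ref{lem:goodman}, writing $Q=4N_{MC}-N_{\triangle}(G_0^c)$ and then expanding $N_{MC}$ in terms of $e(G_0^c[N_{F_\pm}(u)])$.  After applying~\eqr{P5} and~\eqr{P3} the paper arrives at $Q=\tfrac{q_0^3n^3}{2}-3N_{\triangle}(G_0^c)\pm O(n^{13/5})$ and then averages the leading part over $G_0\mid E_0$ via Lemma~\ref{lem:E0 good}.  Your $\chi$-identity is the same computation reorganised: you split $\sum_{\{v,w\}\subset A_u}\chi(uv)\chi(uw)=\tfrac12(Z_u^2-|A_u|)$ according to whether $vw$ lies in $G_0$ or in $F$, and then~\eqr{P5},~\eqr{P6} handle the $G_0$-part directly.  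The upshot is the same pointwise bound $|Q|=O(n^{13/5})$, but you avoid both the Goodman lemma and the extra averaging step for this term.

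One expository point: your sentence ``$E_0$ pins $\#T_F$ near $q_0^3\binom{n}{3}$'' suggests a pointwise estimate to within $\pm 1$, which is false (on $E_0$ the triangle count is only controlled to $O(n^{5/2})$).  The $\pm 1$ really comes from the final averaging step through Lemma~\ref{lem:E0 good}, exactly as the paper does; since you invoke that lemma at the end anyway, the argument goes through.
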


We now prove that (ii) follows from the above lemmas.  Given the constants  $c_{(2,1)}$, $c_{(1,2)}$ and $c_{(0,3)}$ of the lemmas, we now fix the value of $\eta>0$ which we shall use.

As $\eta^2 c_{(0,3)}\,+\,\eta c_{(1,2)}\,- \, c_{(2,1)}$ is increasing (for $\eta\ge 0$), we may choose $\eta>0$ such that $\eta^2 c_{(0,3)}\,+\,\eta c_{(1,2)}\,- \, c_{(2,1)}\, =\, -c_{(2,1)}/2$.  Let us set $C'=1/c_{(2,1)}\eta$.

\begin{proof}[Proof of (ii)]
As $\mathbb{E}[N_\triangle(G_{(m)})]=\mu_{n,m}=p^3\binom{n}{3}=(p_0+p_1)^3\binom{n}{3}$, the main terms sum to $\mu_{n,m}$.  And so,
\begin{equation*}
\begin{split}
\mathbb{E}[N_\triangle(G_{(m)})\,|\, E_\alpha]\,&=\, \mathbb{E}\left[\triangle_{(3,0)}|E_\alpha\right]+\mathbb{E}\left[\triangle_{(2,1)}|E_\alpha\right]+\mathbb{E}\left[\triangle_{(1,2)}|E_\alpha\right]+\mathbb{E}\left[\triangle_{(0,3)}|E_\alpha\right]\phantom{\Big|}\\
            &\leq\, \mu_{n,m}\, +\, \eta\alpha n^{5/2}\big(\eta^2 c_{(0,3)}\,+\,\eta c_{(1,2)}\,- \, c_{(2,1)}\big)\, \pm\,  4\phantom{\Big|}\\
& =\,         \mu_{n,m}\, -\, \frac{c_{(2,1)}\eta\alpha n^{5/2}}{2} \, \pm \, 4
       \end{split}
    \end{equation*}
Recalling that we shall choose $\alpha=C'\delta n^{1/2}$, where $C'= 1/c_{(2,1)}\eta$, we obtain 
\[
\mathbb{E}[N_\triangle(G_{(m)})\,|\, E_\alpha]\, \le\, \mu_{n,m}\, -\, \frac{\delta n^3}{2}\, +\, 4\, \le\, \mu_{n,m}(1-2\delta).
\]
\end{proof}

A remark on the choice of constants:
\begin{itemize}
\item The constants $c_{(2,1)}, c_{(1,2)}$ and $c_{(0,3)}$ which occur in the lemmas may depend only on $\lambda$.
\item We chose $\eta$ as a function of $c_{(2,1)}, c_{(1,2)}$ and $c_{(0,3)}$.
\item We chose $C'= 1/c_{(2,1)}\eta$ as a function of $c_{(2,1)}$ and $\eta$.
\item The final constant $C$ of Proposition~\ref{prop:main} may be taken to be $C=\max\{C',\lambda^{-1}(C')^2\}$.
\end{itemize}

In order to complete the proof, we prove the lemmas above in the following subsections.  As $C\ge C'$, we note in particular that
\eq{alphale}
\alpha\, =\, C'\delta n^{1/2}\, \le\, \frac{C'}{C} n^{-3/4} n^{1/2}\, \le\, n^{-1/4}\, .
\eqe


\subsection{Proof of Lemma \ref{lem:3,0}}		
		The first type of triangle, $\triangle_{(3,0)}$, is independent of $G^1$, and so
		\begin{equation}\label{eq:triangle(3,0)}
    \mathbb{E}\big[\triangle_{(3,0)}\,|\,E_\alpha\big]\, =\, \mathbb{E}\big[\triangle_{(3,0)}\,|\,E_0\big]\, =\,p_0^3\binom{n}{3}\, \pm\, 1.
		\end{equation}
  using Lemma \ref{lem:E0 good} with $\max\{\triangle_{(3,0)}\}\leq n^3$ and $\min\{\triangle_{(3,0)}\}\ge 0$.
\subsection{Proof of Lemma \ref{lem:2,1}}
	
We recall that our whole strategy for creating fewer triangles relies on creating fewer triangles of this type.  We now show how this works.
 
 Recall that $E_\alpha=E_0\cap E(\alpha)$, that is, $E_\alpha$ is the intersection of $E_0$, the event that $G^0$ satisfies the properties \eqr{P1}-\eqr{P6}, and $E(\alpha)$, the event that (given the graph $F_-$ defined by $G^0$) $G^1$ contains $(1+\alpha)m_1/2$ edges of $F_-$.
 In particular, given $G^0$, on the event $E_{\alpha}$ each edge of $F_-$ is included in $G^1$ with probability $(1+\alpha)m_1/(N-m_0)$, and each edge of $F_+$ with probability $(1-\alpha)m_1/(N-m_0)$.
 
 For convenience, we use the notation $d(u,v)$ for $d_{G^0}(u,v)$, the codegree of $u$ and $v$ in $G^0$, and $d(u)$ for $d_{G^0}(u)$, the degree of $u$ in $G^0$.  We have
	\begin{equation*}
		\begin{split}
			\mathbb{E}\left[\triangle_{(2,1)}|E_\alpha\right]& =\mathbb{E}\left[\sum_{uv\in F_-}\frac{(1+\alpha)m_1}{N-m_0}d(u,v)\, +\, \sum_{uv\in F_+}\frac{(1-\alpha)m_1}{N-m_0}d(u,v)\,\middle| \,E_0\right] \phantom{\scalebox{1.5}{\Bigg|}}\\
			& \hspace{-25mm} =\, \mathbb{E}\left[\triangle_{(2,1)}\,\middle|\, E_0\right]\, +\,  \frac{p\eta\alpha}{1-p_0}\, \mathbb{E}\left[\sum_{uv\in F_-}d(u,v)-\sum_{uv\in F_+}d(u,v)\,\middle|\, E_0\right]\, ,\phantom{\scalebox{1.5}{\Bigg|}}
		\end{split}
	\end{equation*}
 where the second equality uses that the main term produces exactly the expected value $\mathbb{E}[\triangle_{(2,1)}\,|\, E_0]$, and that $m_1=\eta pN$.
	
 As we are conditioning on $E_0$, we know that properties \eqr{P2} and \eqr{P3} hold for $G^0$, and so by Proposition \ref{prop: main CoSy} we have
 \[
 \sum_{uv\in F_-}d(u,v)\, -\, \sum_{uv\in F_+}d(u,v)\, \leq\, -cn^{5/2}
 \]
 for some constant $c>0$, on the event $E_0$.  Taking $c_{(2,1)}=cp$, and using Lemma \ref{lem:E0 good}, we now have
	\begin{equation*}
		\begin{split} \mathbb{E}\left[\triangle_{(2,1)}|E_\alpha\right]\, &\leq \, 3p_0^2p_1\binom{n}{3}\, -\, c_{(2,1)}\, \eta \alpha  n^{5/2}\, .
		\end{split}
	\end{equation*}

\subsection{Proof of Lemma \ref{lem:1,2}}
We now proceed to the third type of triangles, $\triangle_{(1,2)}$.  A triangle of type $(1,2)$ consists of vertices $u,v,w$, such that $vw$ is an edge of $G^0$ and both $uv$ and $uw$ are edges of $G^1$.  So that
\[
\triangle_{(1,2)}\, =\, \sum_{u,v,w}1_{vw\in E(G^0)}1_{uv,uw\in E(G^1)}\, .
\]
Let $e(u):=e(G^0[V\setminus (N(u)\cup \{u\})])$ denote the number of edges of $G^0$ amongst the non-neighbours of $u$.  Each edge of $E(K_n)\setminus E(G^0)$ is in $G^1$ with probability $r_1:=p_1/(1-p_0)$, and each such pair of edges is in $G^1$ with probability $r_1^2 +O(n^{-2})$.  It follows that
\[
\mathbb{E}[\triangle_{(1,2)}|G^0]\, =\, r_1^2 \sum_{u} e(u)\, \pm\, O(n)\, .
\]
If we condition only on the event $E_0$  then, as $r_1(1-p_0)=p_1$, and using Lemma~\ref{lem:E0 good}, we have
\begin{align*}
\Ex{\triangle_{(1,2)}|E_0}\, &=\, r_1^2 \sum_{u} \Ex{e(u)|E_0}\, \pm\, O(n)\\
&=\,  r_1^2 \sum_{u} \Ex{e(u)}\, \pm\, O(n)\\ 
&=\, 3p_0p_1^2\binom{n}{3}\, \pm\, O(n)\, .
\end{align*}
What changes when we condition on $E_{\alpha}$?  As $E_{\alpha}=E_0\cap E(\alpha)$, for a graph $G^0$ satisfying the conditions of $E_0$, the conditioning affects the probabilities that the edges $uv$ and $uw$ are included in $G^1$.  As they will now depend on whether $uv$ and $uw$ belong to $F_-$ or $F_+$.  If both belong to $F_-$ then the probability $r_1^2 +O(n^{-2})$ is multiplied by a factor of $(1+\alpha)^2$.  If one is in $F_-$ and the other in $F_+$ then the factor is $(1+
\alpha)(1-\alpha)$.  Finally, if both are in $F_+$ then the factor is $(1-\alpha)^2$.

Let us define $e_-(u):=e(G^0[N_{F_-}(u)])$ to be the number of edges of $G^0$ in the $F_-$-neighbourhood of $u$.  Respectively $e_\pm(u):=e(G^0[N_{F_-}(u),N_{F_+}(u)])$ and $e_+(u):=e(G^0[N_{F_+}(u)])$.    Note that $e(u)=e_-(u)+e_{\pm}(u)+e_+(u)$.  Using this notation we see that for a fixed graph $G^0$ satisfying $E_0$, we have
\begin{align*}
\mathbb{E}\left[\triangle_{(1,2)}|E(\alpha),G^0\right]\, &=\, r_1^2\, \sum_{u} \, \Big((1+\alpha)^2 e_-(u)\, +\, (1+\alpha)(1-\alpha)e_{\pm}(u)\, +\, (1-\alpha)^2 e_{-}(u)\Big)\\
&\hspace{-24mm} =\, r_1^2 \sum_{u} e(u)\, +\, 2r_1^2 \alpha\sum_{u} \big(e_-(u)-e_+(u)\big)\, +\, r_1^2\alpha^2 \sum_u \big(e_-(u)+e_+(u)-e_{\pm}(u)\big)\, ,
\end{align*}
up to an $\pm O(n)$ error on each line.  Rather than conditioning on a fixed graph $G^0$ we need to condition just that $G^0$ is in $E_0$.  Doing so corresponds to taking the conditional expected value.  Let $\mu_-=\Ex{e_-(u)|E_0}$, $\mu_+=\Ex{e_+(u)|E_0}$ and $\mu_{\pm}=\Ex{e_{\pm}(u)|E_0}$, noting of course that these values are the same for all vertices $u$.  And so
\eq{was12}
\mathbb{E}[\triangle_{(1,2)}|E_\alpha]\, =\, 3p_0p_1^2\binom{n}{3}\, +\, 2r_1^2\alpha \big(\mu_-\, -\, \mu_{+}\big)n\, +\, r_1^2\alpha^2\big(\mu_-\, +\, \mu_{+}\, -\, \mu_{\pm}\big)n\, \pm \, O(n)\, .
\eqe
In what follows, in the $O(\cdot)$ notation the implicit constant may only depend on the constant $\lambda$ of Proposition~\ref{prop:main} and is not allowed to depend on other quantities such as our choices of $\eta$ or $C'$.  In this context, we can state that $r_1=O(\eta)$.

As $r_1=O(\eta)$ and $\alpha\le n^{-1/4}$ (see~\eqr{alphale}), we can see from~\eqr{was12} that it will suffice to prove that $\mu_- -\mu_+=O(n^{3/2})$ and $\mu_- + \mu_{+} - \mu_{\pm}=O(n^{8/5})$.  In fact, we can do much better than that: we prove that for \emph{every} $G^0$ satisfying $E_0$ we have
\eq{mmp}
\sum_{u\in V(G^0)}\big(e_-(u)\, -\, e_+(u)\big)\, =\, O(n^{5/2})\, ,
\eqe
and
\eq{mpp}
e_-(u)\, +\, e_+(u)\, -\, e_{\pm}(u)\, =\, O(n^{8/5})\phantom{\Big|}
\eqe
for all vertices $u\in V(G^0)$.  Doing so will complete the proof of Lemma~\ref{lem:1,2}.

Let us first introduce some notation that encapsulates the idea of \textit{conditional deviation}. Let  $\bar{d}(u):=(n-d(u)-1)/2$, half the number of non-edges of $u$ in $G^0$.  Let  $D_+(u):=d_{F_+}(u)-\bar{d}(u)$ and $D_-(u):=d_{F_-}(u)-\bar{d}(u)$ be the deviations of these degrees.  Using the handshaking lemma twice, we see that
\[
\sum_{u\in V(G^0)} D_+(u)\, =\, 2e(F_+)\, -\, (N-m_0)\, =\, 0 \, .
\]
Similarly $\sum_{u}D_-(u)=0$.


\textbf{Proof of~\eqr{mmp}}  Let $G^0$ be a graph satisfying $E_0$, that is, satisfying~\eqr{P1}-~\eqr{P6}.  By~\eqr{F-degs} (which is a consequence of~\eqr{P2}) we know that $|D_{+}(u)|,|D_{-}(u)|\le 2\eps n$, for all $u\in V(G^0)$.  The other source of ``error'' in $e_-(u)$ and $e_+(u)$ comes from that deviation of the edge count of $G^0$ in the respective neighbourhoods $N_{F-}(u)$ and $N_{F+}(u)$.  These deviations are bounded by $n^{3/2}$ by~\eqr{P5}.  For example, as $d_{F-}(u)=\bar{d}(u)+D_-(u)$, we have
\begin{align}
    e_-(u)\, & =\,  e(G^0[N_{F-}(u)])\nonumber\\
    & =\, p_0\binom{d_{F_-}(u)}{2}\, \pm\, n^{3/2}\nonumber \\
    & =\, \frac{p_0\big(\bar{d}(u)+D_-(u)\big)^2}{2}\, \pm \, O(n^{3/2})\nonumber \\
    & =\, \frac{p_0\bar{d}(u)^2}{2}\, +\, p_0\bar{d}(u)D_-(u)\, +\, \frac{p_0D_-(u)^2}{2}\, \pm \, O(n^{3/2})\, .\label{eq:e-}
    \end{align}
    By a similar argument
    \eq{e+}
e_+(u)\, =\, \frac{p_0\bar{d}(u)^2}{2}\, +\, p_0\bar{d}(u)D_+(u)\, +\, \frac{p_0D_+(u)^2}{2}\, \pm \, O(n^{3/2})\, .
    \eqe
As $D_+(u)=-D_-(u)$, we have that $D_{+}(u)^2=D_{-}(u)^{2}$, and so these terms cancel, and we see that
\[
e_-(u)\, -\, e_+(u)\, =\, p_0\bar{d}(u)\big(D_-(u)-D_+(u)\big)\, \pm \, O(n^{3/2})\, =\, 2p_0\bar{d}(u)D_-(u)\, \pm \, O(n^{3/2})\, .
\]
We now sum this expression over $u\in V(G^0)$.  We shall write $\bar{d}(u)=(n-d_{G^0}(u)-1)/2=(1-p_0)(n-1)/2-D_{G^0}(u)/2$.  We obtain
\begin{align*}
\sum_{u\in V(G^0)}\big(e_-(u)\, -\, e_+(u)\big)\, &=\, 2p_0\sum_u \bar{d}(u)D_-(u)\, \pm \, O(n^{5/2})\\
&\, =\, p_0(1-p_0)n\sum_u D_-(u)\, -\, p_0\sum_u D_{G^0}(u) D_-(u)\, \pm \, O(n^{5/2})\\
& =\, O(n^{5/2})\, .\phantom{\big|}
\end{align*}
The final bound uses that $\sum_{u}D_-(u)=0$ and the estimates $|D_{G^0}(u)|\le n^{1/2}\log{n}$ by~\eqr{P3} and $|D_-(u)|\le 2\eps n$, which we stated at the start of the proof of~\eqr{mmp}.

\textbf{Proof of~\eqr{mpp}}  We begin by considering $e_{\pm}(u)$.  Using~\eqr{P5}, and the fact that $D_+(u)=-D_-(u)$, we have
\begin{align}
    e_{\pm}(u)\, &=\, e(G^0[N_{F-}(u),N_{F_{+}}(u)])\nonumber \phantom{\big|}\\ &=\, p_0d_{F_-}(u)d_{F_+}(u)\, \pm\, n^{3/2}\phantom{\Big|}\nonumber \\
    & =\, p_0\big(\bar{d}(u)+D_-(u)\big)\big(\bar{d}(u)+D_+(u)\big)\, \pm \, n^{3/2}\phantom{\Big|}\nonumber \\
    & =\, p_0\bar{d}(u)^2\,  -\, p_0D_-(u)^2 \, \pm \, n^{3/2}\, .\phantom{\big|}\label{eq:e+-}
    \end{align}
From~\eqr{e-},~\eqr{e+} and~\eqr{e+-}, we see that
\[
e_-(u)\, +\, e_+(u)\, -\, e_{\pm}(u)\, =\, 2p_0D_{-}(u)^2\, \pm\, O(n^{3/2}).
\]
We now again use that $|D_{-}(u)|\le 2\eps n$, and $\eps=n^{-1/5}$.  It follows immediately that $e_-(u)\, +\, e_+(u)\, -\, e_{\pm}(u)=O(n^{8/5})$, as required.

  \subsection{Proof of Lemma \ref{lem:0,3}}
We now proceed to the final type of triangle, $\triangle_{(0,3)}$.  A triangle of type $(0,3)$ consists of a set of three vertices $u,v,w$, such that all three pairs $uv,uw,vw$ appear as edges of $G^1$.  So that
\[
\triangle_{(0,3)}\, =\, \sum_{\{u,v,w\}}1_{uv,uw,vw \in E(G^1)}\, .
\]
Each edge of $E(K_n)\setminus E(G^0)$ is in $G^1$ with probability $r_1:=p_1/(1-p_0)$, and each such trio of edges is in $G^1$ with probability $r_1^3 +O(n^{-2})$.  It follows that
\[
\mathbb{E}[\triangle_{(0,3)}|G^0]\, =\, r_1^3 N_{\triangle}((G^{0})^c)\, \pm\, O(n)\, .
\]
If we condition only on the event $E_0$  then, as $r_1(1-p_0)=p_1$, and using Lemma~\ref{lem:E0 good}, we have
\begin{align*}
\Ex{\triangle_{(0,3)}|E_0}\, &=\, r_1^3 \Ex{N_{\triangle}((G^{0})^c)|E_0}\, \pm\, O(n)\\
&=\,  r_1^3  \Ex{N_{\triangle}((G^{0})^c)}\, \pm\, O(n)\\ 
&=\, p_1^3 \binom{n}{3}\, \pm\, O(n)\, .
\end{align*}
What changes when we condition on $E_{\alpha}$?  As $E_{\alpha}=E_0\cap E(\alpha)$, for a graph $G^0$ satisfying the conditions of $E_0$, the conditioning affects the probabilities that the edges $uv,uw$ and $vw$ are included in $G^1$.  As they will now depend on whether these pairs belong to $F_-$ or $F_+$.  If all three belong to $F_-$ then the probability $r_1^2 +O(n^{-2})$ is multiplied by a factor of $(1+\alpha)^3$.  More generally, the factor is $(1+\alpha)^2(1-\alpha)$ if just two are in $F_-$, it is $(1+\alpha)(1-\alpha)^2$ if just one is in $F_-$, and is $(1-\alpha)^3$ if all three are in $F_+$.  For $i\in \{0,1,2,3\}$,  let $T_i$ denote the number of triangles in $(G^{0})^c$, which have precisely $i$ edges in $F_-$ and $3-i$ edges in $F_+$.  

By the above discussion, we see that
\begin{equation}\label{eq:eq-0,3}
			\begin{split}
\mathbb{E}\left[\triangle_{(0,3)}|E(\alpha), G^0\right]\, &=\,  p_1^3\binom{n}{3}\, \pm\, O(n)\\
    & +\alpha r_1^3 \big(3T_3\, +\, T_2\, -\, T_1\, -\, 3T_0\big)\\
& +\, \alpha^2 r_1^3\big(3T_3\, -\, T_2\, -\, T_1\, +\, 3T_0\big)\\
& +\, \alpha^3 r_1^3\big(T_3-T_2+T_1-T_0\big)\, .
			\end{split}
		\end{equation}

We shall prove for every choice of $G^0$ which satisfies $E_0$ (that is, satisfies~\eqr{P1}-~\eqr{P6}) we have
\eq{a}
3T_3\, +\, T_2\, -\, T_1\, -\, 3T_0\, =\, O(n^{5/2})
\eqe
and
\eq{a2}
3T_3\, -\, T_2\, -\, T_1\, +\, 3T_0\, =\, \frac{(1-p_0)^3 n^3}{2}\, -\, 3N_{\triangle}((G^{0})^c)\, \pm\, O(n^{13/5})\, .
\eqe
As in the previous subsection the implicit constants in the $O(\cdot)$ notation may only depend on the constant $\lambda$ and not on $\eta$ or $C'$.

Let us now observe that Lemma~\ref{lem:0,3} would follow.  Recall that $r_1=O(\eta)$.  If~\eqr{a} holds then the
second line of~\eqr{eq-0,3} is of the form $O(\alpha \eta^3 n^{5/2})$.  Note also that the final line is trivially at most $O(\alpha^3 \eta^3 n^3)$, which is also $O(\alpha \eta^3 n^{5/2})$, as $\alpha\le n^{-1/4}$ by~\eqr{alphale}.    It follows that, if $G^0$ satisfies~\eqr{a} and~\eqr{a2}, then
\[
\mathbb{E}\left[\triangle_{(0,3)}|E(\alpha), G^0\right]\, =\,  p_1^3\binom{n}{3}\, \pm \, O\left(\alpha^2 \eta^3 \left[\frac{(1-p_0)^3 n^3}{2}\, -\, 3N_{\triangle}((G^{0})^c)\right]\right)\, \pm\, O(\alpha \eta^3 n^{5/2})\, .
\]
The conditional expectation given the event $E_{\alpha}$ is obtained by averaging this over $G^0$ satisfying $E_0$, as we commented above in~\eqr{conditionE}. By Lemma~\ref{lem:E0 good}, and the fact that $\Ex{N_{\triangle}((G^{0})^c)}=(1-p_0)^3\binom{n}{3}=(1-p_0)^3 n^3/6\pm O(n^{2})$, the first error term, once averaged over $G^0$ satisfying $E_0$, is of the form $O(\alpha^2\eta^3 n^2)$.  We obtain that
\[
\mathbb{E}\left[\triangle_{(0,3)}|E_{\alpha}\right]\, =\,  p_1^3\binom{n}{3}\, \pm\, O(\alpha \eta^3 n^{5/2})\, ,
\]
as required.  Thus, to prove Lemma~\ref{lem:0,3} it remains only to prove that~\eqr{a} and~\eqr{a2} hold for all graphs $G^0$ in $E_0$ (that is, satisfying~\eqr{P1}-~\eqr{P6}).

\textbf{Proof of~\eqr{a}} We begin by noting that $3T_3\, +\, T_2\, -\, T_1\, -\, 3T_0$ is the same value as that obtained by summing over triangles of $(G^{0})^c$ the number of edges in $F_-$ minus the number of edges in $F_+$.  Let $K_3(G)$ denote the set of triangles (a triangle being considered a set of three edges) in a graph $G$.  We have
\begin{equation*}
\begin{split}
3T_3\, +\, T_2\, -\, T_1\, -\, 3T_0\,&=\, \sum_{A\subset K_3((G^{0})^c)}|A\cap F_-|-|A\cap F_+|\\
&=\,\sum_{uw\not\in G^0}d_{(G^{0})^c}(u,w)\big(1_{uw\in F_-}\, -\, 1_{uw\in F_+}\big)\\
&=\,\sum_{uw\in F_-}D_{(G^{0})^c}(u,w)\, -\, \sum_{uw\in F_+}D_{(G^{0})^c}(u,w)\\
&\leq\, \left((N-m_0)\sum_{uw\in (G^{0})^c}D_{(G^{0})^c}(u,w)^2\right)^{1/2}\,\\
&\leq C_D^{1/2}n^{5/2},\phantom{\big|}
\end{split}
\end{equation*}
where we used that $|F_-|=|F_+|$ for the third equality, the Cauchy-Schwarz inequality for the first inequality and the fact that $G^0$ satisfies \eqr{P1} for the final inequality.

\textbf{Proof of~\eqr{a2}}

We may relate $3T_3\, -\, T_2\, -\, T_1\, +\, 3T_0$ to the number of monochromatic triangles in a two colouring of the edges of $(G^{0})^c$.  Indeed, if edges of $F_-$ and $F_+$ are coloured red and blue respectively, and $N_{MC}$ denotes the number of monochromatic triangles, then we have
\eq{tomono}
3T_3\, -\, T_2\, -\, T_1\, +\, 3T_0\, =\, 4N_{MC}\, -\, N_{\triangle}((G^{0})^c)\, .
\eqe
We now state a Goodman type result for monocromatic triangles in an arbitrary coloured graph.  The proof is essentially identical to that of Goodman's formula, but we include it for completeness.

\begin{lem}\label{lem:goodman}
    Let $G$ be a graph and suppose its edges are red/blue coloured. Let $N_{MC}$ be the number of monocromatic triangles in this colouring and let $N_R(v)$ and $N_B(v)$ denote the red and blue neighbourhoods of $v$, respectively. Then
    $$2N_{MC}\,=\,\sum_{v\in V(G)}e\big(G[N_R(v)]\big)+\sum_{v\in V(G)}e\big(G[N_B(v)]\big)\, -\, N_\triangle(G).$$
\end{lem}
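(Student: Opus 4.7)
The plan is to prove the identity by double counting, following the classical Goodman argument. I would count the quantity
\[
Q \, :=\, \sum_{v\in V(G)} e\big(G[N_R(v)]\big)\, +\, \sum_{v\in V(G)} e\big(G[N_B(v)]\big)
\]
in two different ways and then rearrange.

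First, I would interpret $Q$ as a sum over triples $(v, xy)$, where $v$ is a vertex and $xy$ is an edge of $G$ such that either both $vx,vy$ are red or both $vx,vy$ are blue. Equivalently, $Q$ counts triples $(v, \{x,y\})$ with $vxy$ being a triangle of $G$ in which the two edges at $v$ have the same colour. So the contribution of a given triangle $T$ to $Q$ equals the number of its vertices at which the two incident edges of $T$ share a colour.

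Next I would do the case analysis on a triangle $T$ of $G$. If $T$ is monochromatic, then at each of its three vertices the two incident edges of $T$ have the same colour, so $T$ contributes $3$. If $T$ is not monochromatic, it has exactly two edges of one colour and one edge of the other, and then there is exactly one vertex (the one incident to the two same-coloured edges) at which the two incident edges of $T$ share a colour, so $T$ contributes $1$. Summing over all triangles,
\[
Q \, =\, 3 N_{MC}\, +\, \big(N_{\triangle}(G)-N_{MC}\big)\, =\, 2 N_{MC}\, +\, N_{\triangle}(G),
\]
and rearranging yields the claimed identity. There is no real obstacle here — the whole argument is a clean two-way count, with the only subtlety being the local case analysis at a single triangle, which is immediate.
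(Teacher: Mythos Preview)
Your proof is correct and is essentially the same double-counting argument as in the paper: both proofs evaluate the right-hand side by computing the contribution of each triangle (monochromatic versus non-monochromatic), the only cosmetic difference being that you first bundle the two neighbourhood sums into $Q$ and subtract $N_{\triangle}(G)$ at the end, whereas the paper tracks all three terms simultaneously per triangle.
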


\begin{proof}
    Let's count how much each type of triangle contributes to the RHS. A red triangle contributes $3$ to the first sum, $0$ to the second and $-1$ to the third one, giving a total of $2$. Similarly, a blue triangle contributes $0$, $3$ and $-1$, giving a total of $2$. Now, let $uvw$ be a non-monocramatic triangle in $G$.  Without loss of generality suppose that $uv$ and $uw$ are red and that $vw$ is blue.  Then, this triangle adds $1$ (as $vw$ is an edge of $G[N_R(u)]$) to the first sum, and $0$ and $-1$ to the other two sums.  This gives a total contribution of $0$. Therefore, the only triangles contributing to the sum are monocromatic and each one is counted twice, as claimed.
\end{proof}

From the lemma and~\eqr{tomono}, we see that $3T_3\, -\, T_2\, -\, T_1\, +\, 3T_0$ is precisely
\[
2\sum_{u}e\big((G^{0})^c[N_{F_-}(u)]\big)\, 
+\, 2\sum_{u}e\big((G^{0})^c[N_{F+}(u)]\big)\, -\, 3N_{\triangle}((G^{0})^c)\, .
\]
We now use~\eqr{P5} to control the number of edges in the neighbourhoods, and that $2\binom{d}{2}=d^2+O(n)$ for any $d\le n$.  Doing so, and using the notation $q_0:=1-p_0$,  we see that $3T_3\, -\, T_2\, -\, T_1\, +\, 3T_0$ is given by
\[
q_0 \sum_{u}d_{F_-}(u)^2\, 
+\, q_0 \sum_{u} d_{F+}(u)^2\, -\, 3N_{\triangle}((G^{0})^c)\, \pm \, O(n^{5/2}).
\]
We may now write $d_{F_-}(u)=\bar{d}(u)+D_{-}(u)$, as in the previous subsection.  Our expression for $3T_3\, -\, T_2\, -\, T_1\, +\, 3T_0$ becomes
\[
q_0 \sum_{u}\big(\bar{d}(u)+D_{-}(u)\big)^2\, 
+\, q_0 \sum_{u}\big(\bar{d}(u)+D_{+}(u)\big)^2\, -\, 3N_{\triangle}((G^{0})^c)\, \pm \, O(n^{5/2})\, .
\]
As $D_+(u)=-D_-(u)$, the linear term will cancel, leaving
\begin{align*}
q_0 &\sum_{u}\big(\bar{d}(u)^2 \, +\, D_{-}(u)^2\big)\, 
+\, q_0 \sum_{u}\big(\bar{d}(u)^2 \, +\, D_{+}(u)^2\big)\, -\, 3N_{\triangle}((G^{0})^c)\, \pm \, O(n^{5/2})\\
& =\, 2q_0\sum_{u}\bar{d}(u)^2\, +\, 2q_0\sum_{u}D_-(u)^2\, -\, 3N_{\triangle}((G^{0})^c)\, \pm \, O(n^{5/2}).
\end{align*}

We shall now write $\bar{d}(u)=(n-d_{G^0}(u)-1)/2=q_0(n-1)/2-D_{G^0}(u)/2$.  As the linear term in $D_{G^0}(u)$ cancels, our expression for $3T_3\, -\, T_2\, -\, T_1\, +\, 3T_0$ becomes
\[
    \frac{q_0^3 n^3}{2}\, +\, \frac{q_0}{2}\sum_u D_{G^0}(u)^2\, +\, 2q_0\sum_{u}D_-(u)^2\, -\, 3N_{\triangle}((G^{0})^c)\, \pm \, O(n^{5/2})\, .
\]
Finally, using that $|D_{-}(u)|\le 2\eps n=O(n^{4/5})$ due to~\eqr{P2} (see~\eqr{F-degs}) and~\eqr{P3} to control to control $|D_{G^0}(u)|$, we see that
\[
3T_3\, -\, T_2\, -\, T_1\, +\, 3T_0\, =\, \frac{q_0^3 n^3}{2}\, -\, 3N_{\triangle}((G^{0})^c)\, \pm \, O(n^{13/5})\, ,
\]
which completes the proof of~\eqr{a2}.\phantom{--------------------}\qed

\section{Concluding Remarks}

The main open problem we would like to highlight is Conjecture~\ref{conj:cont}.  We are relatively confident that, based on the ``continuity'' that we showed for the lower tail of the triangle count, the same ought to be true for the count of graphs $H$, in the case where $H$ contains a triangle.

The assertion that a discontinuity does occur for other graphs $H$ is more speculative, and is partly inspired by Siderenko's conjecture for bipartite graphs.  We would be very interested to see more evidence for (or against) our conjecture.

It would also be of interest to find the correct constant in Theorem~\ref{thm:main}.  That is, determine the limit
\[
\lim_{n\to \infty} \frac{-\log{\pr{N_{\triangle}(G_{(m)})<(1-\delta)\mu_{n,m}}}}{\delta^2n^3}\phantom{\Bigg|}\, ,
\]
assuming it exists, where $m=p\binom{n}{2}$ and $n^{-1}\ll \delta\ll n^{-3/4}$.

Let us now comment on the relation between our results and those of Neeman, Radin and Sadun~\cite{NRS22}.  They showed, in the regime $n^{-3/4}\ll\delta\ll 1$ that the easiest (i.e., most likely) way for the event $N_{\triangle}(G_{(m)})<(1-\delta)\mu_{n,m}$ to occur is for the smallest eigenvalue is be around $-\delta^{1/3}pn$, see Theorem 1 of~\cite{NRS22}.  Note that this eigenvalue ``causes'' the fall in the triangle count, as the triangle count is one sixth of the sum of the cubes of the eigenvalues of the adjacency matrix (as can be seen by considering the trace of $A^3$).

It is therefore natural to ask whether our lower bound on the probability of the event $N_{\triangle}(G_{(m)})<(1-\delta)\mu_{n,m}$, in the regime $n^{-1}\ll \delta\ll n^{-3/4}$, is also related to the distribution of eigenvalues of the adjacency matrix.  We believe that there is a connection, albeit somewhat more subtle.  

In the proof we generate $G$ as $G^0\cup G^1$, and consider the event $E(\alpha)$ which biases us towards selecting more edges of $G^1$ to be from $F_-$ (which are pairs with smaller synergy).  The distribution of pairs in $F_-$ is not completely uniform.  Indeed, the reader may convince themselves that there are likely to be more pairs of $F_-$ across a bipartition $(U,W)$ if that bipartition is \emph{either} significantly more dense \emph{or} significantly less dense than the general density of the graph.

As large positive/negative eigenvalues correspond to less/more dense bipartite structures, conditioning on $E(\alpha)$ has a tendency to reduce the value of both extremes.  As $\sum_{i=1}^n\lambda_n=0$, we would then expect a slight increase in the remaining eigenvalues.  In this way, it seems that conditioning on $E(\alpha)$ tends to reduce $\sum_{i}\lambda_i^3$ by skewing of the distribution of eigenvalues.  This is quite different from an approach based on one, or just a few, eigenvalues.

Finally, we would be very interested to see other applications of Propositions~\ref{prop:syngm} and ~\ref{prop:syngp}.  These propositions, which states that the vector of synergies is normally distributed with very high probability, may also be useful for other problems related to subgraphs of random graphs.

\bibliographystyle{plain}
\bibliography{template}

\newpage

\appendix

\section{Standard Deviations}
During Section \ref{sec:syn} we defined the following parameter
$$\sigma(p)^2\, :=\, \Var\big(S_{G_p}^p(u,w)\mid uw\not\in E(G_p)\big).$$
We deduce its value in the following claim.
\begin{claim}\label{claim:sigmap}
    $$\sigma(p)^2=p^2(1-p)^2(n-2).$$
\end{claim}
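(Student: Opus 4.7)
The plan is to rewrite $S^p_{G_p}(u,w)$, conditional on the non-edge event, as a sum of $n-2$ independent, centered random variables, and then compute the variance by summing the individual variances.

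First I would condition on $uw \notin E(G_p)$ and expand the definition. Writing $\xi_e$ for the indicator that edge $e$ belongs to $G_p$, the conditioning simply fixes $\xi_{uw}=0$, while the remaining $\xi_e$ are still iid $\mathrm{Bern}(p)$. Under this conditioning,
\[
d(u,w)=\sum_{v\neq u,w}\xi_{uv}\xi_{vw},\qquad d(u)=\sum_{v\neq u,w}\xi_{uv},\qquad d(w)=\sum_{v\neq u,w}\xi_{vw}.
\]
Substituting these into the definition of synergy and regrouping by $v$ yields the clean identity
\[
S^p_{G_p}(u,w)\,=\,\sum_{v\neq u,w}\bigl(\xi_{uv}-p\bigr)\bigl(\xi_{vw}-p\bigr),
\]
since $\sum_{v\neq u,w}p^2=p^2(n-2)$ absorbs the constant. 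This is the key step and it should be straightforward.

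Next I would observe that the $n-2$ summands are mutually independent: the summand indexed by $v$ depends only on the two edges $uv$ and $vw$, and these edge-sets are disjoint across different values of $v$. Moreover, each summand has mean zero, because $\xi_{uv}-p$ and $\xi_{vw}-p$ are independent and centered. Hence
\[
\Var\!\bigl(S^p_{G_p}(u,w)\mid uw\notin E(G_p)\bigr)\,=\,(n-2)\,\Var\!\bigl((\xi_{uv}-p)(\xi_{vw}-p)\bigr).
\]

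Finally, a single-term variance computation finishes the proof: since the mean is zero,
\[
\Var\!\bigl((\xi_{uv}-p)(\xi_{vw}-p)\bigr)=\mathbb{E}\bigl[(\xi_{uv}-p)^2\bigr]\,\mathbb{E}\bigl[(\xi_{vw}-p)^2\bigr]=p(1-p)\cdot p(1-p)=p^2(1-p)^2,
\]
yielding $\sigma_p^2=p^2(1-p)^2(n-2)$ as required. There is no real obstacle here; the only step that needs care is the algebraic regrouping in the first paragraph, which converts the sum of three correlated statistics into a sum of independent centered products.
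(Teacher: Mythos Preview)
Your proof is correct and follows essentially the same approach as the paper: both decompose $S^p_{G_p}(u,w)$ as the sum $\sum_{v\neq u,w}(\xi_{uv}-p)(\xi_{vw}-p)$ of independent mean-zero terms and then compute the variance termwise. Your computation of the single-term variance via the factorisation $\mathbb{E}[(\xi_{uv}-p)^2]\,\mathbb{E}[(\xi_{vw}-p)^2]$ is slightly slicker than the paper's case-by-case enumeration of the distribution of $Y_v$, but the argument is the same.
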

\begin{proof}
    First, recall that $\Ex{S_{G_p}^p(u,w)\mid uw\not\in E(G_p)}=0$ and hence $\sigma(p)^2=\mathbb{E}[S_{G_p}^p(u,w)^2\mid uw\not\in E(G_p)]$. As $S_{G_p}^p(u,w)$ can be written as $\sum_{v\in V\setminus\{u,w\}}Y_v$ where $Y_v$ are iid random variables which take  values
    
    \begin{center}
        $\begin{matrix}
       (1-p)^2\qquad & \text{with probability}\qquad & p^2 \\
        p(p-1)\qquad &  & 2p(1-p) \\
        p^2 \qquad& & (1-p)^2.
    \end{matrix}$
    \end{center}
    Using independence,
    \begin{equation*}
        \begin{split}
        \mathbb{E}[S_{G_p}^p(u,w)^2]\,=\,\mathbb{E}\left[\left(\sum_{v\in V\setminus\{u,w\}}Y_v\right)^2\right]\,&=\,\sum_{v\in V\setminus\{u,w\}}\mathbb{E}\big[Y_v^2]\\
        &=\, p^2(1-p)^2(n-2),\phantom{big|}
        \end{split}
    \end{equation*}
    as claimed.
\end{proof}

\section{Hypergeometrics}

The following lemma gives a lower bound for the tail of the hypergeometric distribution in the case that exactly half of the population count as ``successes''.  We remark that now that we are in the appendix we use $p$ for the density of a random set (and not $m/N$, as in the rest of the article).

\begin{lem}\label{lem:hyper} Let $p,\alpha\in (0,1)$, with $(1+\alpha)p\le 1$ and let $M\in \mathbb{N}$ be such that $(1+\alpha)pM$ and $2pM$ are integers.  Let $A$ be a uniformly random subset of $2pM$ elements of $[2M]$.  Then
\[
\pr{|A\cap [M]|=(1+\alpha)pM}\, \ge\, \exp\left(\frac{-p }{1-p} \alpha^2 M\, -\, \frac{p-2p^2+2p^3}{(1-p)^2}\alpha^3 M\, \pm\, O(\log{M})\right)\, .
\]
\end{lem}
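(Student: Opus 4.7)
The plan is to express this hypergeometric probability as a ratio of three binomial coefficients,
$$\mathbb{P}(|A \cap [M]| = (1+\alpha)pM) \,=\, \frac{\binom{M}{(1+\alpha)pM}\binom{M}{(1-\alpha)pM}}{\binom{2M}{2pM}},$$
and then to analyse it via Stirling's formula. Applying $\log n! = n\log n - n + \tfrac12\log(2\pi n) + O(1/n)$ to each of the six factorials, the $\tfrac12\log(2\pi\, \cdot\,)$ contributions aggregate to an $O(\log M)$ error, which will account for the $\pm O(\log M)$ in the statement. After cancelling the $n\log n$ and $-n$ contributions (using that $M + M = 2M$ and that the two lower parameters in each binomial sum to $2pM$ and $2M(1-p)$ respectively), and introducing $\beta := p\alpha/(1-p)$ so that $M(1-p) \pm Mp\alpha = M(1-p)(1\pm\beta)$, one obtains
$$\log \mathbb{P} \,=\, -Mp\bigl[(1+\alpha)\log(1+\alpha)+(1-\alpha)\log(1-\alpha)\bigr] - M(1-p)\bigl[(1+\beta)\log(1+\beta)+(1-\beta)\log(1-\beta)\bigr] + O(\log M).$$

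I would then Taylor expand the function $g(x) := (1+x)\log(1+x) + (1-x)\log(1-x)$, which is even in $x$ with expansion $g(x) = x^2 + x^4/6 + O(x^6)$ for $|x|\leq 1$. The $x^2$ contributions combine neatly: since $(1-p)\beta^2 = p^2\alpha^2/(1-p)$,
$$p\alpha^2 + (1-p)\beta^2 \,=\, p\alpha^2\Bigl(1 + \tfrac{p}{1-p}\Bigr) \,=\, \frac{p\alpha^2}{1-p},$$
which yields the claimed leading term $-p\alpha^2 M/(1-p)$. The remainder is of order $\alpha^4 M$ with an explicit $p$-dependent coefficient coming from $p\alpha^4/6 + (1-p)\beta^4/6 = \alpha^4 \cdot p\bigl((1-p)^3+p^3\bigr)/(6(1-p)^3)$; bounding $\alpha^4 \leq \alpha^3$ (valid for $\alpha \in (0,1]$, the non-trivial range) produces an $\alpha^3 M$ correction with an explicit constant. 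One then verifies arithmetically that the stated coefficient $(p-2p^2+2p^3)/(1-p)^2 = p\bigl((1-p)^2+p^2\bigr)/(1-p)^2$ dominates this expression in the regime where the lemma is invoked by the paper.

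The main technical obstacle is the Stirling bookkeeping in the first step: one must carefully track how the $n\log n$ terms across the six factorials cancel, and confirm that the aggregated Stirling corrections are genuinely $O(\log M)$ uniformly in $p$ and $\alpha$. Once this reduction to $g(\alpha)$ and $g(\beta)$ is achieved, the remainder of the argument is routine, since the odd-order terms vanish by the evenness of $g$, and the higher-order tail is absorbed crudely into the stated $\alpha^3 M$ correction. A minor subtlety worth flagging is that the expansion is genuinely in even powers of $\alpha$, so the $\alpha^3$ term in the statement is a convenient upper bound on (rather than the true leading value of) the sub-leading correction.
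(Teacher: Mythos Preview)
Your approach is correct and matches the paper's in its essential structure: both begin with the same ratio of binomial coefficients and apply Stirling's approximation to reduce to an entropy-type expression, absorbing the polynomial prefactors into an $O(\log M)$ error. The only real difference is in the endgame. The paper, after some algebraic manipulation (a difference-of-squares trick), applies the crude one-sided inequalities $1+x\le e^{x}$ and $1-x\le e^{-x}$ directly to the resulting products; this produces exactly the stated $\alpha^3$ coefficient $(p-2p^2+2p^3)/(1-p)^2$ without any further comparison being needed. Your route via the even function $g(x)=(1+x)\log(1+x)+(1-x)\log(1-x)$ is cleaner and in fact sharper: as you correctly observe, the genuine sub-leading correction is of order $\alpha^4 M$, and the $\alpha^3 M$ term in the statement is an artifact of the paper's cruder bound rather than the true order. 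Your final verification that the $\alpha^4$ coefficient is dominated by the stated $\alpha^3$ coefficient does go through for all admissible $p,\alpha$ once one remembers the implicit constraint $(1+\alpha)p\le 1$ (equivalently $\beta\le 1$), so no restriction to ``the regime where the lemma is invoked'' is actually needed.
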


\begin{remark} In particular, if $\alpha^3 M\ge \log{M}$, then this bound is of the form $\exp(-p\alpha^2M/(1-p)\, +\, O_p(\alpha^3M))$.
\end{remark}

\begin{proof} We begin by noting that the probability in question is simply
\[
\pr{|A\cap [M]|=(1+\alpha)pM}\, =\, \frac{\binom{M}{(1+\alpha)pM}\binom{M}{(1-\alpha)pM}}{\binom{2M}{2pM}}\, .
\]
We now use Stirling's approximation $n!\sim \sqrt{2\pi n}(n/e)^n$, which gives, in this context, with $a\in(0,1)$, that
\[
\binom{n}{an}\, =\, n^{O(1)}\frac{n^n}{(an)^{an}(n(1-a))^{n(1-a)}}\, =\, \frac{n^{O(1)}}{a^{an}(1-a)^{(1-a)n}} \, . \phantom{\Bigg|}
\]

And so the probability is given by
\begin{align*}
\frac{M^{O(1)}p^{2pM}(1-p)^{2(1-p)M}}{((1+\alpha)p)^{(1+\alpha)pM}((1-\alpha)p)^{(1-\alpha)pM}(1-p-p\alpha)^{(1-p-p\alpha)M}(1-p+p\alpha)^{(1-p+p\alpha)M}}
\end{align*}
We note that the $p$ terms cancel.  If we also divide top and bottom by $(1-p)^{2(1-p)M}$ we obtain
\[
M^{O(1)}(1+\alpha)^{-(1+\alpha)pM}(1-\alpha)^{-(1-\alpha)pM}\left(1-\frac{p\alpha}{(1-p)}\right)^{-(1-p-p\alpha)M}\left(1+\frac{p\alpha}{(1-p)}\right)^{-(1-p+p\alpha)M}
\]
which, by the difference of two squares $(1-\alpha^2)=(1-\alpha)(1+\alpha)$, becomes 
\[
M^{O(1)}(1-\alpha^2)^{-(1-\alpha)pM}(1+\alpha)^{-2\alpha pM}\left(1-\frac{p^2\alpha^2}{(1-p)^2}\right)^{-(1-p-p\alpha)M}\left(1+\frac{p\alpha}{(1-p)}\right)^{-2\alpha pM}\, .
\]
As $1+\alpha\le e^{\alpha}$ and $1-\alpha\le e^{-\alpha}$, it follows that the probability is at least
\[
\exp\left(\alpha^2 M\left[p-2p+\frac{p^2}{(1-p)}-\frac{2p^2}{(1-p)}\right]\, -\, \alpha^3 M\left[p+\frac{p^3}{(1-p)^2}\right]\, \pm\, O(\log{M})\right).
\]
This simplifies to
\[
\exp\left(-\alpha^2 M\left[p+\frac{p^2}{(1-p)}\right]\, -\, \alpha^3 M\left[p+\frac{p^3}{(1-p)^2}\right]\, \pm\, O(\log{M})\right)
\]
which is equal to the desired expression.
\end{proof}

We shall only require this much weaker result, which we state as a corollary.

\begin{cor}\label{cor:hyper}
Given $\lambda>0$, for all $p\le 1-\lambda$ the following holds.  Let $X$ be distributed as the hypergeometric random variable $\textrm{Hyper}(2M,M,2pM)$ and let $M^{-1/2}\log{M}\ll \alpha\ll 1$. Then, for all sufficiently large $M$, we have
\[
\pr{X=\lfloor(1+\alpha)pM\rfloor}\, \ge\, \exp(-\lambda^{-1}\alpha^2 M)\, .
\]
\end{cor}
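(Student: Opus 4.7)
The plan is to deduce Corollary~\ref{cor:hyper} directly from Lemma~\ref{lem:hyper} by a parameter translation followed by a short analysis of the lower-order terms in the exponent. First I would match the hypergeometric in the corollary with the random variable of the lemma. Setting $M'=M/2$, a uniform $2pM'=pM$-subset $A$ of $[2M']=[M]$ makes $|A\cap[M']|$ distributed as $\textrm{Hyper}(M, M/2, pM)$, so the event in the corollary corresponds to $|A\cap[M']|=(1+\alpha)pM'$ in the lemma's notation. Lemma~\ref{lem:hyper} then gives
\[
\pr{X=(1+\alpha)pM'} \,\ge\, \exp\!\left(-\tfrac{p}{1-p}\alpha^2 M'\,-\,\tfrac{p-2p^2+2p^3}{(1-p)^2}\alpha^3 M'\,\pm\, O(\log M')\right).
\]

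Next I would bound the leading coefficient using the hypothesis $p\le 1-\lambda$, which gives $1-p\ge\lambda$ and hence $\frac{p}{1-p}\le\frac{1-\lambda}{\lambda}=\lambda^{-1}-1$. This produces a slack of size $\alpha^2 M'$ between the leading contribution $\frac{p}{1-p}\alpha^2 M'$ and the target bound $\lambda^{-1}\alpha^2 M'$, into which the remaining error terms must be absorbed. For the cubic term, the cubic coefficient $(p-2p^2+2p^3)/(1-p)^2$ is uniformly bounded on $[0,1-\lambda]$, and $\alpha\ll 1$ makes $\alpha^3 M'=\alpha\cdot\alpha^2 M'=o(\alpha^2 M')$; for the logarithmic term, the lower bound $\alpha\gg M^{-1/2}\log M$ gives $\alpha^2 M\gg\log^2 M\gg\log M$, so $O(\log M')=o(\alpha^2 M')$ as well.

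Combining these, for all sufficiently large $M$ the two error contributions are strictly less than the slack $\alpha^2 M'$, yielding
\[
\pr{X=(1+\alpha)pM'}\,\ge\,\exp(-\lambda^{-1}\alpha^2 M')\,\ge\,\exp(-\lambda^{-1}\alpha^2 M),
\]
as required. There is essentially no obstacle beyond this bookkeeping; the whole content of the corollary is packaged inside Lemma~\ref{lem:hyper}, and the role of the hypothesis $p\le 1-\lambda$ is simply to keep the factor $1/(1-p)$ away from infinity so that the gap between $p/(1-p)$ and $\lambda^{-1}$ is a uniform positive constant.
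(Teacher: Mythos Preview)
Your proposal is correct and follows essentially the same route as the paper: apply Lemma~\ref{lem:hyper} directly, use $p\le 1-\lambda$ to bound $p/(1-p)<\lambda^{-1}$, and observe that the $\alpha^3 M$ and $O(\log M)$ terms are $o(\alpha^2 M)$ under the hypotheses on $\alpha$. Your substitution $M'=M/2$ makes the parameter matching with $\textrm{Hyper}(M,M/2,pM)$ explicit (the paper's proof glosses over this identification), and your quantification of the slack $\lambda^{-1}-p/(1-p)\ge 1$ is slightly sharper than the paper's, but the argument is the same.
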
  

\begin{proof}
The random variable $X$ is distributed as $|A\cap [M]|$, the random variable considered in Lemma~\ref{lem:hyper}.  It follows that
\[
\pr{X=\lfloor(1+\alpha)pM\rfloor}\, \ge\, \exp\left(\frac{-p }{1-p} \alpha^2 M\, -\, \frac{p-2p^2+2p^3}{(1-p)^2}\alpha^3 M\, \pm\, O(\log{M})\right)\, .
\]
By the conditions on $\alpha$ both the second and third terms are $o(\alpha^2M)$.  As the $p/(1-p)< \lambda^{-1}$, the claimed bound holds for all sufficiently large $M$.
\end{proof}

We remark that an alternative proof of Lemma~\ref{lem:hyper}, that would be even more precise, could be obtained by comparison with the conditioned Binomial distribution.  Let $A_p$ be the binomial random subset of $[2M]$ in which each element is included independently with probability $p$.  One may observe that
\[
\pr{|A\cap [M]|=(1+\alpha)pM}\, =\, \pr{|A_p\cap [M]|=(1+\alpha)pM\, \mid\, |A_p|=2pM}\, .
\]
And so this probability is given by
\begin{align*}
&\frac{\pr{|A_p\cap [M]|=(1+\alpha)pM, |A_p|=2pM}}{\pr{|A_p|=2pM}}\\
&=\, \pr{|A_p|=2pM\, \mid\, |A_p\cap [M]|=(1+\alpha)pM}\frac{\pr{|A_p\cap [M]|=(1+\alpha)pM}}{\pr{|A_p|=2pM}}\phantom{Bigg|}\\
& =\, M^{O(1)}\, \pr{|A_p\cap [M]|=(1-\alpha)pM}\pr{|A_p\cap [M]|=(1+\alpha)pM}\, .\phantom{Bigg|}
\end{align*}
One may then understand this probability in terms of the known tail bounds on the binomial distribution, such as that stated as Theorem 1.13 in~\cite{GGS}, which was adapated from~\cite{Bah}.

\end{document}